\newcommand{\x}{ {\mathbf x}}
\newcommand{\rr}{ {\mathbf r}}
\newcommand{\trr}{ {\mathbf {\tilde r}}}
\newcommand{\tx}{ {\mathbf {\tilde  x}}}
\newcommand{\F}{\mathbf F}
\newcommand{\mylist}{\begin{list}{$\bullet$}{ \setlength{\topsep}{0cm}
\setlength{\itemsep}{0cm} \setlength{\parsep}{0cm}}}
\newcommand{\mylistend}{\end{list}}
\newcounter{task}
\newcounter{subtask}
\newcounter{subsubtask}
\newcommand{\bu}{\bf{u}}
\newcommand{\xti}{{\bf x}_{t_i}}
\newcommand{\JF}{{\bf J}_{{\bf F}}}
\newcommand{\JTF}{{\bf J}_{{\tilde {\bf F}}}}
\newcommand{\txti}{{\tilde {\bf x}}_{t_i}}
\newtheorem{theorem}{Theorem}[section]
\newtheorem{lemma}[theorem]{Lemma}
\newcommand{\mathleft}{\@fleqntrue\@mathmargin\parindent}
\newcommand{\mathcenter}{\@fleqnfalse}
\begin{document}

\thispagestyle{empty}
\setcounter{page}{0}

\begin{Huge}
\begin{center}
Computer Science Technical Report CSTR-{19/2015} \\
\today
\end{center}
\end{Huge}
\vfil
\begin{huge}
\begin{center}
{\tt R. \c{S}tef\u{a}nescu, A. Sandu}
\end{center}
\end{huge}

\vfil
\begin{huge}
\begin{it}
\begin{center}
``{\tt Efficient approximation of sparse Jacobians for time-implicit reduced order models}''
\end{center}
\end{it}
\end{huge}
\vfil

\begin{large}
\begin{center}
Computational Science Laboratory \\
Computer Science Department \\
Virginia Polytechnic Institute and State University \\
Blacksburg, VA 24060 \\
Phone: (540)-231-2193 \\
Fax: (540)-231-6075 \\
Email: \url{sandu@cs.vt.edu} \\
Web: \url{http://csl.cs.vt.edu}
\end{center}
\end{large}

\vspace*{1cm}

\begin{tabular}{ccc}
\includegraphics[width=2.5in]{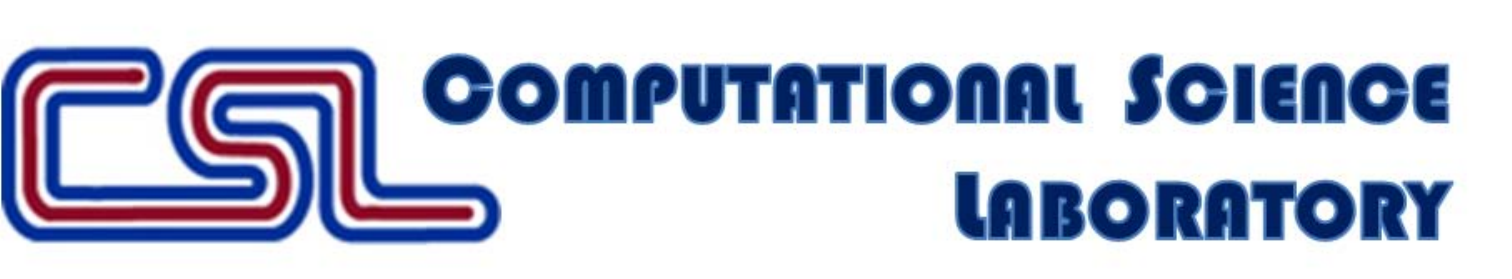}
&\hspace{2.5in}&
\includegraphics[width=2.5in]{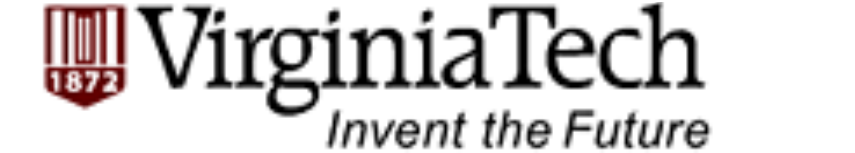} \\
{\bf\em Innovative Computational Solutions} &&\\
\end{tabular}

\newpage

\title{Efficient approximation of sparse Jacobians \\ for time-implicit reduced order models}
\author[1]{R\u{a}zvan \c{S}tef\u{a}nescu \thanks{rstefane@vt.edu}}
\author[1]{Adrian Sandu \thanks{sandu@cs.vt.edu}}
\affil[1]{Computational Science Laboratory, Department of Computer Science, Virginia Polytechnic Institute and State University, Blacksburg, Virginia, USA, 24060}

\date{}
\maketitle

\begin{abstract}
This paper introduces a sparse matrix discrete interpolation method to effectively compute matrix approximations in the reduced order modeling framework. 
The sparse algorithm developed herein relies on the discrete empirical interpolation method and uses only samples of the nonzero entries of the matrix series.
The proposed approach can approximate very large matrices, unlike the current matrix discrete empirical interpolation method which is limited by its 
large computational memory requirements.  The empirical interpolation indexes obtained by the sparse algorithm slightly differ from the ones computed by
the matrix discrete empirical interpolation method as a consequence of the singular vectors round-off errors introduced by the economy or full singular value 
decomposition (SVD) algorithms when applied to the full matrix snapshots. When appropriately padded with
zeros the economy SVD factorization of the nonzero elements of the snapshots matrix is a valid economy SVD for the full snapshots matrix. Numerical experiments are performed with the 1D Burgers and 2D Shallow Water Equations test problems where the quadratic reduced nonlinearities are computed via tensorial calculus.  The sparse matrix approximation strategy is compared against five existing methods for computing reduced Jacobians: a) matrix discrete empirical interpolation method, b) discrete empirical interpolation method, c) tensorial calculus, d) full Jacobian projection onto the reduced basis subspace, and e) directional derivatives of the model along the reduced basis functions.
The sparse matrix method outperforms all other algorithms. The use of traditional matrix discrete empirical interpolation method is not possible for very large instances due to its excessive memory requirements.
\end{abstract}

\begin{keyword}
POD; DEIM; implicit reduced-order models; shallow water equations;  finite difference;
\end{keyword}

\section{Introduction}\label{Sec::Introduction}

Modeling and simulation of multi-scale complex physical phenomena has become an indispensable tool across a wide range of disciplines. This usually translates into large-scale systems of coupled partial differential equations, ordinary differential equations or differential algebraic equations which often bear an extremely large computational cost and demand excessive storage resources due to the large-scale, nonlinear nature of high-fidelity models. Since many problems arising in practice are stiff an implicit time integrator is applied to advance the solution and to keep the errors in the results bounded. A sequence of high dimensional linear systems are solved iteratively at each time step when a Newton-like method is employed to obtain the solution of the corresponding system of nonlinear algebraic equations. As a result, the computational complexity of individual simulations can become prohibitive even when high-performance computing resources are available.

Not surprisingly, a lot of attention has been paid to reducing the costs of the complex system solutions by
retaining only those state variables that are consistent with a particular phenomena of interest. Reduced order modeling refers to the development of low-dimensional models that represent important characteristics of a high-dimensional or infinite dimensional dynamical system.

Balanced truncation \citep{Moore1981,Antoulas2009,Sorensen_Antoulas2002, Mullis_Roberts1976} and moment matching \citep{Freund2003,Feldmann_Freund1995,Grimme1997} have been proving successful in developing reduced order models in the case of linear models. Unfortunately balanced truncation does not extend easily for high-order systems, and several grammians approximations were developed leading to methods such as approximate subspace iteration \citep{Bakeretal1996},
least squares approximation \citep{Hodel1991}, Krylov subspace methods \citep{Jaimoukha_Kasenally1994,Gudmundsson_Laub1994} and balanced Proper Orthogonal Decomposition \citep{Willcox02balancedmodel}. Among moment matching  methods we mention partial realization \citep{Gragg_Lindquist1983,Benner_Sokolov2006},
Pad\'{e} approximation \citep{Gragg1972,Gallivan94,Gutknecht1994,VanDooren_1995}
and rational approximation \citep{Bultheel_Moor2000}.

Currently input-independent highly accurate reduced models can be utilized to successfully reproduce the solutions of high-fidelity linear models. However in the case of general nonlinear systems, the transfer function approach is not yet applicable and input-specified semi-empirical methods are usually employed. Recently some encouraging research results using  generalized transfer functions and generalized moment matching have been obtained in \citep{MPIMD12-12} for nonlinear model order reduction but future investigations are required.

Proper Orthogonal Decomposition (POD) \citep{karhunen1946zss,loeve1955pt,hotelling1939acs,lorenz1956eof} is the most prevalent basis selection method for nonlinear
problems. Construction of low relevant manifolds is also the scope of reduced basis method \citep{BMN2004,grepl2005posteriori,patera2007reduced,rozza2008reduced,
Dihlmann_2013,Lieberman_et_al_2010} and dynamic mode decomposition \citep{Rowley2009,Schmid2010,Tissot_et_al_2014,Bistrian_Navon_2014}. Data analysis
using POD and method of snapshots \citep{Sir87a, Sir87b, Sir87c} is conducted to extract basis functions, from experimental data or detailed simulations
of high-dimensional systems, for subsequent use in Galerkin projections that yield low dimensional dynamical models. Unfortunately the POD Galerkin approach
has a major efficiency bottleneck since its nonlinear reduced terms still have to be evaluated on the original state space making the simulation of the
reduced-order system too expensive. There exist several ways to avoid this problem such as the empirical interpolation method
(EIM) \citep{BMN2004} and its discrete variant DEIM \citep{Cha2008,ChaSor2012,ChaSor2010} and best points interpolation method \citep{NPP2008}. Recently the
interpolation selection procedure in DEIM is formulated using a QR factorization with column pivoting \citep{Drmac_Gugercin2015}.
Missing point estimation \citep{Astrid_2008} and Gauss-Newton with approximated tensors \citep{Carlberg2_2011,Carlberg_2012} methods are relying upon the gappy POD technique \citep{Everson_1995} and were developed for
the same reason.

In the case of implicit POD Galerkin reduced order models solved via Newton based methods, during the on-line stage the Jacobian of the nonlinear term has also
a computational cost that depends on the full-order dimension. More precisely, at each iteration, the full Jacobian is evaluated using the reduced order solution and then projected onto the POD manifold to obtain the reduced Jacobian required by Newton solver. One can slightly decrease the computational load of the POD Galerkin method by approximating the reduced Jacobians using the directional derivatives of Newton residuals in the directions of POD basis functions \citep{Vermeulen2006}. For polynomial nonlinearities of order $p$, tensorial calculus transfers several calculations from on-line
to off-line stage and proposes reduced Jacobians computations with a complexity of order of ${\mathcal O}(k^{p+1})$, where $k$ is the dimension of reduced manifold. Such method was applied to obtain implicit reduced order Shallow Water Equations models not only for forward simulation purposes
\citep{Stefanescu_etal_forwardPOD_2014} but also for deriving a reduced order optimization framework \citep{Stefanescu_etal_ROM_DA_2014}.

Recently the use of interpolation methods relying on greedy algorithms became attractive for calculus of reduced order nonlinear terms derivatives.
Chaturantabut \citep{ChaSor2010} proposed a sampling strategy centered on the trajectory of the nonlinear terms in order to approximate the reduced Jacobians.
An extension for nonlinear problems that do not have componentwise dependence on the state has been introduced in \citep{Zhou_2012}. More accurate methods 
directly sample entries of the discrete Jacobians in addition to the nonlinear function. For example, based on EIM, Tonn (2011) \cite{Tonn_2011} developed
Multi-Component Empirical Interpolation Method for deriving affine approximations for continuous vector valued functions and \citet{Wirtz2012}
introduced matrix DEIM (MDEIM) approach to approximate the Jacobian of a nonlinear function to obtain aposteriori error estimates of DEIM reduced 
nonlinear dynamical system. In the context of the finite element method, an unassembled variant of DEIM was developed \citep{antil2014application}, 
\citep{dedden2012model}, \citep{tiso2013discrete} and it can be used for approximation of sparse
Jacobian matrices arising from element-wise assembly.

This paper introduces the sparse matrix discrete empirical interpolation method (SMDEIM) to construct fast and accurate approximation for sparse parametric matrices such as time dependent Jacobians. SMDEIM is a sparse variant of MDEIM approximation method  and relies on the greedy algorithm introduced in
\citep{Cha2008} for computing approximations of the
nonlinear functions. The proposed sparse algorithm utilizes samples of the nonzero entries of the matrix series and the output discrete interpolation indexes
slightly differ from the ones obtained using full matrix snapshots.The differences are a consequence of the singular vectors round-off errors introduced
by the economy or full SVD algorithms during the factorization of the full matrix snapshots. We proved that the economy SVD of the SMDEIM snapshots matrix when
appropriately padded with zeros is a valid thin SVD for the MDEIM snapshots matrix. Now in contrast with MDEIM method, we apply the DEIM algorithm directly
to the dense singular vectors and not their extended variant padded with zeros. Since DEIM algorithm selects only interpolation indexes corresponding to non empty
rows, the output is similar but less computational demanding. The computational complexity of SMDEIM depends on the number of nonzero
elements of the parametric matrices which is typically $\mathcal O(n)$, in contrast to MDEIM where the snapshots contain $n^2$ elements. As an application we
integrate the SMDEIM approach with the reduced order modeling framework to deliver fast time implicit surrogate models.

The corresponding reduced order model is compared against the ROM versions obtained via MDEIM and other four type of approaches already existing
in the literature for 1D Burgers and 2D Swallow Water Equations models. All the surrogate models employ tensorial calculus discussed in \citep{Stefanescu_etal_forwardPOD_2014}
to approximate the reduced nonlinearities so the proposed ROMs differ only in the way they compute the reduced derivatives. The on-line stages of MDEIM and sparse
version reduced order models have the same computational complexities. For the
off-line stage, the sparse version masively decreases the computational cost required by MDEIM reduced order model thus making it practically attractive.
For a small number of DEIM indexes, the MDEIM reduced Jacobians and their sparse variant are as accurate as ones obtained by Galerkin projections and tensorial calculus.

The paper is organized as follows. Section \ref{Sec::Greedy_alg_Jacobians} describes the greedy algorithm introduced in \citep{Cha2008,ChaSor2010} for
nonlinear functions approximations and extensions for their Jacobians computations. Section \ref{sec::SMDEIM} introduces the novel
SMDEIM methodology. Section \ref{sec:ROM} presents the reduced order modeling framework focusing on proper orthogonal decomposition method while integrating
both MDEIM and SMDEIM approaches for reduced Jacobian computations purposes. Other strategies for computing reduced order derivatives are also described.
Section \ref{sec:Numerical_resuls} discusses the discrete 1D Burgers and 2D Swallow Water Equations models as well as the results of numerical experiments of the aforementioned
reduced order models. Conclusions are drawn in Section \ref{sec:Conclusions}.

\section{Greedy algorithms for Jacobian approximations}\label{Sec::Greedy_alg_Jacobians}

An important question in many applications is the following: given a matrix $A$ find an approximation that satisfies certain properties. For example, one may be interested in finding a reliable approximation of $A$ by a matrix of lower rank. The singular value decomposition (SVD) is known to provide the best such approximation for any given fixed rank. In our case we are particular concerned for specific approximation matrices whose structures can be exploit in the framework of reduced order modeling for efficient on-line computation of reduced Jacobian matrices. Unfortunately a SVD matrix approximation only is not able to provide the computational complexity reduction expected by an implicit reduced order model for reduced derivatives calculations. However in combination with a greedy technique the desired matrix structure is obtained and bellow we describe two methodologies already existing in the literature. The new sparse matrix DEIM approach developed in this work seeks to overcome the deficiencies of the currently available approximations.

\subsection{Discrete Empirical Interpolation Method for approximation of nonlinear functions}\label{sub::DEIM_nf}

DEIM is a discrete variation of the Empirical Interpolation method proposed by \citet{BMN2004} which provides an efficient way to approximate
nonlinear vector valued functions. The application was suggested and analyzed by Chaturantabut and Sorensen in \citep{Cha2008,ChaSor2010,ChaSor2012}.

Let  ${\bf F}:D\rightarrow \mathbb{R}^n,~D\subset \mathbb{R}^n$ be a nonlinear function. If $V = \{{\bf v}_l\}_{l=1}^m$, ${\bf v}_l \in \mathbb{R}^n$, is a linearly independent set, for $ m\leq n$, then for $\tau\in D$, the DEIM approximation of order $m$ for ${\bf F}(\tau)$ in the space spanned by $\{{\bf v}_l\}_{l=1}^m$ is given by
\begin{equation}\label{eq8}
{\bf F}(\tau)\approx V{\bf c}(\tau),~V\in\mathbb{R}^{n\times m},~{\bf c}(\tau) \in \mathbb{R}^m.
\end{equation}
The basis $V$ can be constructed effectively by applying the POD method on the nonlinear snapshots ${\bf F}(\tau^{t_i}),~\tau^{t_i}\in D~(~\tau\textrm{ may be a function defined from } [0, T] \rightarrow D,\textrm{ and }\tau^{t_i} \textrm{ is the }$ value of  $\tau \textrm{ evaluated at }t_i) ,~i=1,..,n_s,~n_s>0$. Next, interpolation is used to determine the coefficient vector ${\bf c}(\tau)$ by selecting $m$ rows $\rho_1,..,\rho_m,~\rho_i\in \mathbb{N}^{*}$,  of the overdetermined linear system (\ref{eq8}) to form a $m-$by$-m$ linear system
$$P^T\,V\,{\bf c}(\tau)=P^T\,{\bf F}(\tau),$$
where $P=[{\bf e}_{\rho_1},..,{\bf e}_{\rho_m}]\in \mathbb{R}^{n\times m}$, ${\bf e}_{\rho_i}=[0,..0,\underbrace{1}_{\rho_i},0,..,0]^T\in\mathbb{R}^n.$
The DEIM approximation of ${\bf F}(\tau)\in \mathbb{R}^n$ becomes
\begin{equation}
 {\bf F}(\tau)\approx V\, (P^TV)^{-1}\, P^T\, {\bf F}(\tau).
 \label{DEIM_on_scalar_function}
\end{equation}

Now the only unknowns that need to be specified are the indexes $\rho_1,\rho_2,...,\rho_m$ or the matrix $P$  whose dimensions are $n \times m$. These are determined by Algorithm \ref{alg::DEIM}.

\begin{algorithm}
{\bf INPUT}: $\{{\bf v}_l\}_{l=1}^m\subset\mathbb{R}^n$ (linearly independent): \newline
{\bf OUTPUT}: $\vec\rho=[\rho_1,..,\rho_m]\in\mathbb{N}^m$
 \begin{algorithmic}[1]
 \State $[|\psi|~~ \rho_1]=\max|{\bf v}_1|,{\psi}\in \mathbb{R}$ and $\rho_1$ is the component position of the largest absolute value of $v_1$, with the smallest index taken in case of a tie.
 \State $V=[{\bf v}_1]\in \mathbb{R}^n,~P=[{\bf e}_{\rho_1}]\in \mathbb{R}^n,~\vec\rho=[\rho_1]\in \mathbb{N}.$
 \For{$\ell=2,..,m$}

      \State Solve $(P^TV){ \bf c}=P^T{\bf v}_\ell \textrm{  for } {\bf c}\in \mathbb{R}^{\ell-1};~V,P\in\mathbb{R}^{n\times(\ell-1)}.$
      \State ${\bf r}={\bf v}_\ell-V{{\bf c}},~{\bf r}\in \mathbb{R}^n.$
      \State $[|\psi|~~ \rho_\ell]=\max\{|{\bf r}|\}.$
      \State $V \leftarrow [U~~{\bf u}_\ell],~P \leftarrow [P~~{\bf e}_{\rho_\ell}],~\vec\rho \leftarrow\left[\begin{array}{cc}
           \vec\rho\\
           \rho_\ell\\
           \end{array}\right].$
   \EndFor
 \end{algorithmic}
 \caption{Computation of DEIM Interpolation Indexes}
 \label{alg::DEIM}
\end{algorithm}

The DEIM procedure inductively constructs a set of indexes from the input POD basis $\{{\bf v}_l\}_{l=1}^m\subset\mathbb{R}^n$. Initially the algorithm searches
for the largest value of the first POD basis $|{\bf v}_1|$ and the corresponding index represents the first DEIM interpolation index $\rho_1\in\{1,2,..,n\}$.
The remaining interpolation indexes $\rho_l,~l=2,3..,m$ are selected so that each of them corresponds to the entry of the largest magnitude of $|{\bf r}|$.
The vector ${\bf r}$ can be viewed as the residual or the error between the input basis ${\bf v}_l,~l=2,3..,m$ and its approximation $V{\bf c}$ from interpolating the basis
$\{{\bf v}_1,{\bf v}_2,..,{\bf v}_{l-1}\}$ at the indexes ${\rho_1},{\rho_2},..,{\rho_{l-1}}$. The linear independence of the input basis $\{{\bf v}_l\}_{l=1}^m$ guarantees that,
in each iteration, ${\bf r}$ is a nonzero vector and the output indexes $\{\rho_i\}_{i=1}^m$ are not repeating \cite{ChaSor2010}.

An error bound for the DEIM approximation is provided in Chaturantabut and Sorensen \citep{Cha2008,ChaSor2012}. An example of DEIM approximation of a highly nonlinear function defined
on a discrete 1D spatial domain can be found in \cite{ChaSor2010}, underlying the DEIM efficiency.

Based on the greedy algorithm detailed above we will describe three approaches for approximating the Jacobian of ${\bf F}$ denoted by ${\bf J}_{\bf F}(\tau) \in \mathbb{R}^{n \times n}$. The first two techniques DEIM and matrix DEIM were introduced
in \cite{ChaSor2010} and \cite{Wirtz2012} while the sparse matrix DEIM algorithm is introduced here for the first time. While DEIM utilizes function samples,
the matrix DEIM and sparse matrix DEIM
are directly sampling entries of the discrete operator, i.e. the Jacobian of ${\bf F}$.

\subsection{Discrete Empirical Interpolation Method for approximating Jacobians of nonlinear functions}\label{sub::DEIM_jnf}

This method was suggested in \cite{ChaSor2010} for computing Jacobians of nonlinear functions
in the framework of reduced order modeling. It proposes a sampling strategy centered on the trajectory of the nonlinear functions and makes use of DEIM approximation formula \eqref{DEIM_on_scalar_function}. Extensions for nonlinear problems that do not have componentwise dependence on the state have also investigated in \cite{ChaSor2010,Zhou_2012}. The DEIM Jacobian approximation is given by
\begin{equation}
{\bf J}_{\bf F}(\tau) \approx V\, (P^TV)^{-1}\,P^T\,{\bf J}_{\bf F}(\tau),\quad \tau =\{\tau^1,..,\tau^{n_s}\},\quad {\bf J}_{\bf F}(\tau^i) \in \mathbb{R}^{n \times n}, i=1,..,n_s,
\label{eq::Jacobian_DEIM_approx}
\end{equation}
where $V$ is constructed by extracting the left singular vectors of nonlinear snapshots matrix ${\bf F}(\tau^i),i=1,..,n_s,~n_s > 0$, while matrix $P$ is
the output of Algorithm \ref{alg::DEIM}.

\subsection{Matrix Discrete Empirical Interpolation Method} \label{sub::MDEIM}

This method applies the greedy technique described in subsection \ref{sub::DEIM_nf} in a different manner than DEIM approach discussed in the previous subsection. Thus instead of using snapshots of the nonlinear function ${\bf F}$, Jacobian snapshots written as vectors feed the Algorithm
\ref{alg::DEIM} providing a direct approximation of ${\bf J}_{\bf F}(\tau)$. It was introduced by \citet{Wirtz2012} to develop an efficient
a-posteriori error estimation for POD-DEIM reduced nonlinear dynamical systems. In particular the matrix DEIM approach was employed for an efficient
off-line/on-line approximation of logarithmic Lipschitz constants of linear functions. A similar idea named ''Multi-Component EIM`` has been formulated
in \citet{Tonn_2011} to derive affine approximations for continuous vector valued functions.

First, we define the transformation ${A} \rightarrow T[{A}]$ which maps the entries of a matrix ${A} \in \mathbb{R}^{n \times n}$ column-wise into the vector $T[{A}] \in \mathbb{R}^{n^2 \times 1}$. Next, we compute the economy or thin SVD \cite{Trefethen_Bau_1997}  of the matrix of snapshots
$\big[T[{\bf J}_{\bf F}(\tau^{i})]\big]_{i=1,..,n_s} \in \mathbb{R}^{n^2 \times n_s},~\tau^{i}\in D,~i=1,..,n_s,~n_s>0$ and the left singular vectors $V_{J} \in {\mathbb{R}^{n^2\times n_s}}$ are given by
\begin{equation}\label{eq::SVD_MDEIM}
  \big[ \, T[{\bf J}_{\bf F}(\tau^{1})], \dots, T[{\bf J}_{\bf F}(\tau^{n_s})]\, \big] = V_{J}~\Sigma_{J}~W_{J}^T,
 \end{equation}
where $\Sigma_{J} \in {\mathbb{R}^{n_s\times n_s}}$ is a diagonal matrix containing the singular values of $ \big[T[{\bf J}_{\bf F}(\tau^{i})]\big]_{i=1,..,n_s}$
and $W_{J} \in {\mathbb{R}^{n_s\times n_s}}$ gathers the right singular vectors of the matrix of snapshots.

Let $m\leq n_s$ be the number of DEIM index points and $P_{J} \in {\mathbb{R}^{n^2\times n_s}}$ represents
the index points transformation matrix obtained by application of DEIM Algorithm \ref{alg::DEIM} to the left singular vector matrix $V_{J}$. Then the $m^{\rm th}$
order matrix DEIM approximation of ${\bf J}_{\bf F}(\tau),~\tau =\{\tau^1,..,\tau^{n_s}\}$ is
\begin{equation}\label{eq::MDEIM}
  {\bf J}_{\bf F}(\tau) \approx T^{-1}\left[\,V_m\, \left(P_m^TV_m\right)^{-1}\, P_m^T\, T[{\bf J}_{\bf F}(\tau)]\,\right],
 \end{equation}
where $V_m = V_{J}(:,1:m)$, $P_m = P_{J}(:,1:m)$.

However for large values of $n$ the SVD factorization calculation demands increased computational resources and the memory required to store the left singular vectors $V_J$ \ increases substantially thus limiting the algorithm application. This is the case even if a sparse SVD is employed since the output singular vectors are generally not sparse.

\section{Sparse Matrix Discrete Empirical Interpolation Method} \label{sec::SMDEIM}

The most general method to compute the SVD factorization uses two phases. Initially the MDEIM snapshots matrix $\big[T[{\bf J}_{\bf F}(\tau^{i})]\big]_{i=1,..,n_s} \in \mathbb{R}^{n^2 \times n_s}$ is brought
into a bidiagonal form and then the bidiagonal  matrix is diagonalized \cite{Golub_van_Loan_1996,Trefethen_Bau_1997}. The first stage usually employs Golub-Kahan or
Lawson-Hanson-Chan bidiagonalizations while in the second stage a variant of the QR or divide-and-conquer algorithms are applied to generate the diagonal
form. The cumulated computational cost of the SVD applied to the MDEIM snapshots matrix in case $n_s\ll n^2$ is $O(n^2\cdot n_s^2)$ and
singular vectors of $n^2$ size need to be stored in the memory.

Since usually the MDEIM Jacobian snapshots matrix  is sparse we can apply a fill reducing ordering \cite{Davis_et_al_2004a,Davis_et_al_2004b}
before the QR factorization to minimize the number of non-zeros in R or use a profile reduction ordering of R \cite{Hager2002}. Other sparse singular value
decomposition approaches relying on blocked algorithms have been proposed in \cite{Rajamanickam_2009}. The Lanczos subspace iteration
based algorithms implemented in SVDPACK [Berry 1992] and PROPACK [Larsen 1998] are probably the most succesfully aproaches for finding the sparse SVD.
While significantly decreasing the factorization computational cost, the sparse SVD methodologies still require to store singular vectors of size $n^2$. Moreover in case a thin SVD is applied round-off errors
usually spoil the sparsity structure of the singular vectors.

To mitigate this drawback we propose a sparse version of matrix DEIM algorithm which relies on the fact that typically the Jacobians of
large-scale time dependent problems have few nonzero entries and preserve their structure in time. We will build the snapshots matrix containing only the nonzero
elements of the Jacobian matrices thus significantly decreasing the computation cost of the thin SVD factorization and memory requirement for saving the corresponding
singular vectors. Owning to the Jacobian structure we will prove that the thin SVD of the SMDEIM snapshots when appropriately padded with zeros is a valid thin SVD
for the MDEIM snapshots $\big[T[{\bf J}_{\bf F}(\tau^{i})]\big]_{i=1,..,n_s} \in \mathbb{R}^{n^2 \times n_s}$. The algorithm can be also adapted to accommodate Jacobian matrices with structures that vary in time.

For the moment we assume that Jacobian snapshots ${\bf J}_{\bf F}(\tau^{t_i})$ have the same sparsity pattern for all
$\tau^{t_i} \in D,$ $i=1,..,n_s$ as it is the case for the majority of time dependent problems.  Moreover we consider that only $r$ entries of
${\bf J}_{\bf F}(\tau^{t_i}),\quad i=1,..,n_s,$  out of $n^2$ are different from zero. This suggests that MDEIM snapshots matrix
$\big[T[{\bf J}_{\bf F}(\tau^{i})]\big]_{i=1,..,n_s}$ has a rank less than or equal to $\min(r,n_s)$ and contains $n^2 -r$ rows with zero elements only. Usually the
number of nonzero entries $r$ is larger than the number of snapshots $n_s$. Now we can appropriately select the $r$ nonzero rows of MDEIM snapshots matrix by using a truncated identity matrix ${\bar P}\in\mathbb{R}^{r\times n^2}$
\begin{equation}\label{sparse mdeim matrix}
{\bar P}\, \big[T[{\bf J}_{\bf F}(\tau^{i})]\big]_{i=1,..,n_s} \in \mathbb{R}^{r \times n_s}.
\end{equation}

Consequently ${\bar P}={\bar p}_{jl},j=1,..,r;l=1,..,n^2$ represents a linear transformation ${\bar P}: \mathbb{R}^{n^2} \to \mathbb{R}^{r}$ that contains only one element other than zero per each line, i.e. ${\bar p}_{jo_j} = 1,~j=1,..r$, where $o_j\in \{1,2,..,n^2\}$ are the $r$ locations corresponding to the nonzero
elements of $T[{\bf J}_{\bf F}(\tau^{i})] \in \mathbb{R}^{n^2}$. Here and in the subsequent lemma we abuse of notations of ${\bar P}$ and ${\bar P}^T$ to denote both the linear
transformations and their matrices representations, respectively.
\begin{lemma}\label{Lemma1}
 The transpose of matrix ${\bar P}$, ${\bar P}^T: \mathbb{R}^r \to \mathbb{R}^{n^2}$ represents the inverse transformation of ${\bar P}$.

\end{lemma}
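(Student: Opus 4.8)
The plan is to make the selection structure of $\bar P$ fully explicit and then verify the two composition identities $\bar P\,\bar P^T = I_r$ and $\bar P^T\,\bar P = \Pi$, where $\Pi$ is the coordinate projector onto the subspace carrying the nonzero entries. First I would record that, by the definition $\bar p_{j o_j}=1$ with all other entries of row $j$ zero, the $j$-th row of $\bar P$ is exactly the transposed standard basis vector ${\bf e}_{o_j}^T\in\mathbb{R}^{1\times n^2}$. Equivalently $\bar P = \big[\,{\bf e}_{o_1},\dots,{\bf e}_{o_r}\,\big]^T$, so that $\bar P^T = \big[\,{\bf e}_{o_1},\dots,{\bf e}_{o_r}\,\big]\in\mathbb{R}^{n^2\times r}$ has $j$-th column ${\bf e}_{o_j}$. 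The one combinatorial fact I would isolate up front is that the locations $o_1,\dots,o_r\in\{1,\dots,n^2\}$ are pairwise distinct, since they enumerate the $r$ distinct nonzero positions of $T[{\bf J}_{\bf F}(\tau^{i})]$.

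With this in hand, the two products are immediate. For $\bar P\,\bar P^T\in\mathbb{R}^{r\times r}$, its $(j,k)$ entry is ${\bf e}_{o_j}^T{\bf e}_{o_k}=\delta_{o_j o_k}$, which by distinctness of the $o_j$ equals $1$ when $j=k$ and $0$ otherwise, giving $\bar P\,\bar P^T = I_r$; thus $\bar P^T$ is a genuine right inverse of $\bar P$. For the reverse composition $\bar P^T\,\bar P\in\mathbb{R}^{n^2\times n^2}$, the $(l,m)$ entry is $\sum_{j=1}^r \bar p_{jl}\,\bar p_{jm}$, which equals $1$ exactly when $l=m\in\{o_1,\dots,o_r\}$ and $0$ otherwise. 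Hence $\bar P^T\,\bar P = \Pi$, the diagonal orthogonal projector onto the support subspace $\mathcal{S}=\mathrm{span}\{{\bf e}_{o_1},\dots,{\bf e}_{o_r}\}$, and in particular $\bar P^T\,\bar P\,{\bf x}={\bf x}$ for every ${\bf x}\in\mathcal{S}$.

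Finally I would interpret these identities as the asserted invertibility: $\bar P\,\bar P^T=I_r$ says that compressing (via $\bar P$) after zero-padding (via $\bar P^T$) is the identity on $\mathbb{R}^r$, while $\bar P^T\,\bar P=\Pi$ says the opposite composition restores any vector supported on $\{o_1,\dots,o_r\}$. Since every snapshot $T[{\bf J}_{\bf F}(\tau^{i})]$ lies in $\mathcal{S}$ under the shared-sparsity assumption, $\bar P^T$ inverts $\bar P$ on the whole space of relevant vectors. I expect the only real obstacle to be precisely this restriction: because $r<n^2$ in general, $\bar P$ is not injective on all of $\mathbb{R}^{n^2}$, so the statement cannot be read as a two-sided inverse on the full space. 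The correct reading — a two-sided inverse between $\mathbb{R}^r$ and the support subspace $\mathcal{S}$ — is exactly what is needed to justify the zero-padding of the thin SVD in the following section, so I would state it carefully rather than overclaim a full inverse.
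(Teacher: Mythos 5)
Your proof is correct and follows essentially the same route as the paper's: both arguments compute the two compositions entrywise, showing ${\bar P}\,{\bar P}^T = I_r$ and that ${\bar P}^T{\bar P}$ is the diagonal projector which fixes every vector supported on $\{o_1,\dots,o_r\}$, hence fixes each snapshot $T[{\bf J}_{\bf F}(\tau^{i})]$. Your write-up is in fact slightly more careful than the paper's — you make the pairwise distinctness of the $o_j$ explicit and state the precise restricted sense in which ${\bar P}^T$ inverts ${\bar P}$, points the paper leaves implicit.
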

\begin{proof} First let us denote by $G = {\bar P}^T {\bar P}$, $G = g_{jl},~j,l=1,..,n^2 \in \mathbb{R}^{n^2 \times n^2}$,
\begin{equation*}
g_{jl} = \sum_{k=1}^r {\bar p}_{kj}{\bar p}_{kl},\quad j,l=1,..,n^2.
\end{equation*}
Since ${\bar p}_{jo_j} = 1,~j=1,..r$, then the only components of $G$ other then zero are $g_{o_jo_j}=1,~j=1,..,r$. Now the elements of
$T[{\bf J}_{\bf F}(\tau^{i})]$
different then zero are located at $o_j,~j=1,..,r$ positions and thus
\begin{equation*}
{\bar P}^TPT[{\bf J}_{\bf F}(\tau^{i})] = T[{\bf J}_{\bf F}(\tau^{i})],\forall i=1,..,n_s.
\end{equation*}
It is easy to show that ${\bar P} \cdot {\bar P}^T \in \mathbb{R}^{r \times r}$ is the identify matrix which completes the proof.
\end{proof}

Next we compute the thin singular value decomposition of the dense snapshots matrix \eqref{sparse mdeim matrix} denoted by SMDEIM snapshots matrix and the left singular vectors
$V_{J_{nz}} \in {\mathbb{R}^{r\times n_s}}$ are given by
\begin{equation}\label{eq::SVD_SMDEIM}
  {\bar P}\big[T[{\bf J}_{\bf F}(\tau^{i})]\big]_{i=1,..,n_s} = V_{J_{nz}}~\Sigma_{J_{nz}}~W_{J_{nz}}^T,
 \end{equation}
where $\Sigma_{J_{nz}} \in {\mathbb{R}^{n_s\times n_s}}$ is the singular values diagonal matrix and $W_{J_{nz}} \in {\mathbb{R}^{n_s\times n_s}}$
collects the right singular vectors of the dense matrix.
\begin{lemma}\label{Lemma2}
 ${\bar P}^TV_{J_{nz}}~\Sigma_{J_{nz}}~W_{J_{nz}}^T$ is a thin SVD representation of the MDEIM snaspshots matrix $\big[T[{\bf J}_{\bf F}(\tau^{i})]\big]_{i=1,..,n_s}$.
\end{lemma}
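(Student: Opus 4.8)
The plan is to verify directly that the three-factor product ${\bar P}^T V_{J_{nz}}\,\Sigma_{J_{nz}}\,W_{J_{nz}}^T$ satisfies the defining properties of a thin SVD of the full snapshots matrix: it reconstructs that matrix exactly, its outer factors have orthonormal columns, and its middle factor is diagonal with nonnegative entries. Both nontrivial ingredients are already packaged in Lemma~\ref{Lemma1}, namely the reconstruction identity ${\bar P}^T{\bar P}\,T[{\bf J}_{\bf F}(\tau^{i})]=T[{\bf J}_{\bf F}(\tau^{i})]$ and the right-inverse relation ${\bar P}\,{\bar P}^T=I_r$.

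First I would establish the reconstruction. Substituting the SMDEIM factorization \eqref{eq::SVD_SMDEIM} gives ${\bar P}^T V_{J_{nz}}\Sigma_{J_{nz}}W_{J_{nz}}^T={\bar P}^T{\bar P}\big[T[{\bf J}_{\bf F}(\tau^{i})]\big]_{i=1,\dots,n_s}$, and applying the reconstruction identity of Lemma~\ref{Lemma1} column by column collapses the right-hand side to exactly $\big[T[{\bf J}_{\bf F}(\tau^{i})]\big]_{i=1,\dots,n_s}$. This is the step that genuinely uses the sparsity structure: only because every snapshot column is supported on the $r$ selected rows does padding with zeros via ${\bar P}^T$ recover the original MDEIM snapshot.

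Second I would check orthonormality of the padded left factor by computing $({\bar P}^T V_{J_{nz}})^T({\bar P}^T V_{J_{nz}})=V_{J_{nz}}^T({\bar P}\,{\bar P}^T)V_{J_{nz}}=V_{J_{nz}}^T V_{J_{nz}}=I_{n_s}$, where the middle equality invokes ${\bar P}\,{\bar P}^T=I_r$ and the last uses that $V_{J_{nz}}$ has orthonormal columns as a factor of the thin SVD \eqref{eq::SVD_SMDEIM}. The remaining two factors require no work: $\Sigma_{J_{nz}}$ is diagonal with nonnegative diagonal and $W_{J_{nz}}$ is column-orthogonal, both inherited verbatim from \eqref{eq::SVD_SMDEIM}. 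Assembling these three observations delivers the claim.

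The argument presents no deep obstacle; the one point demanding care is keeping the two identities of Lemma~\ref{Lemma1} straight, since the reconstruction step needs ${\bar P}^T{\bar P}$ acting as the identity on the sparse snapshot columns, whereas the orthonormality step needs the genuinely different fact ${\bar P}\,{\bar P}^T=I_r$. Because ${\bar P}^T{\bar P}$ is merely the coordinate projector onto the subspace indexed by the nonzero locations, the first identity holds only for vectors supported there — which is precisely where the standing assumption of a common, time-invariant sparsity pattern for all $\tau^{i}$ enters the proof.
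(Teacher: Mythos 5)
Your proof is correct and follows essentially the same route as the paper's: multiply the SMDEIM factorization \eqref{eq::SVD_SMDEIM} by ${\bar P}^T$, invoke Lemma~\ref{Lemma1} to recover the full snapshots matrix, and then confirm that ${\bar P}^TV_{J_{nz}}$ has orthonormal columns while $\Sigma_{J_{nz}}$ and $W_{J_{nz}}$ inherit their properties directly from \eqref{eq::SVD_SMDEIM}. The only cosmetic difference is that you establish orthonormality through the matrix identity $({\bar P}^TV_{J_{nz}})^T({\bar P}^TV_{J_{nz}})=V_{J_{nz}}^T{\bar P}{\bar P}^TV_{J_{nz}}=I_{n_s}$, whereas the paper phrases the very same fact as equality of columnwise inner products under zero-padding.
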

\begin{proof}
 If we multiply the left hand side of the equation \eqref{eq::SVD_SMDEIM} by ${\bar P}^T$ and make use of the Lemma \ref{Lemma1} we immediatly obtain

\begin{equation}
  \big[T[{\bf J}_{\bf F}(\tau^{i})]\big]_{i=1,..,n_s} = {\bar P}^TV_{J_{nz}}~\Sigma_{J_{nz}}~W_{J_{nz}}^T.
 \end{equation}

Now $\Sigma_{J_{nz}}$ is a diagonal matrix with positive real entries and $W_{J_{nz}}$ describes an orthogonal matrix since both of the matrices are obtained from the
singular value decomposition in \eqref{eq::SVD_SMDEIM}. According to \cite{Trefethen_Bau_1997} we only need to demostrate that ${\bar P}^TV_{J_{nz}}$ has orthonormal
columns in order to complete the proof. This is obvious since for all $j,l=1,..,n_s$
\begin{equation*}
 \langle v_j,v_l\rangle_2 = \langle {\bar v}_j,{\bar v}_l \rangle_2 = \left \{
  \begin{array}{ll}
  1,\quad j=l \\
  0,\quad j \neq l, \\
  \end{array}
\right.
\end{equation*}
where $\langle \cdot,\cdot \rangle_2$ is the Euclidian product and $v_j$ and ${\bar v}_j,j=1,..,n_s$ are the columns of matrices ${\bar P}^TV_{J_{nz}}$ and $V_{J_{nz}}$,
respectively.
\end{proof}

Now instead of applying the DEIM algorithm \ref{alg::DEIM} to the left singular vectors ${\bar P}^TV_{J_{nz}}$ of $\big[T[{\bf J}_{\bf F}(\tau^{i})]\big]_{i=1,..,n_s}$
as in the matrix DEIM approach we propose to use the left singular vectors $V_{J_{nz}}$ of ${\bar P}\big[T[{\bf J}_{\bf F}(\tau^{i})]\big]_{i=1,..,n_s}$ as input for
the DEIM algorithm \ref{alg::DEIM}. The output DEIM indexes match perfectly in both situations, however in the latest case the computational load is decreased leading to the DEIM index points matrix $P_{J_{nz}} \in \mathbb{R}^{r \times n_s}$. By leting $m\leq n_s$ be the number of DEIM index points, we obtain the following $m^{\textrm{th}}$ DEIM approximation
\begin{equation*}
  {\bar P}\big[T[{\bf J}_{\bf F}(\tau^{i})]\big] \approx V_{m_{nz}}\Big(P_{m_{nz}}^T\,V_{m_{nz}}\Big)^{-1}\, P_{m_{nz}}^T\,{\bar P}\big[T\big[{\bf J}_{\bf F}(\tau^i)\big]\big],\forall i=1,..,n_s,
 \end{equation*}
where $V_{m_{nz}} = V_{J_{{nz}}}(:,1:m),~P_{m_{nz}} = P_{J_{nz}}(:,1:m)$.

Now multiplying the left hand side of the above equation with ${\bar P}^T$ and using Lemma \ref{Lemma1} and inverse transformation $T^{-1}$ we obtain the $m^{\rm th}$ order sparse
matrix DEIM approximation of ${\bf J}_{\bf F}(\tau),~\forall \tau=\{\tau^1,..,\tau^{n_s}\}$
\begin{equation}\label{eq::sparseMDEIM}
  {\bf J}_{\bf F}(\tau) \approx T^{-1}\Bigg[{\bar P}^T\bigg[ V_{m_{nz}}\Big(P_{m_{nz}}^TV_{m_{nz}}\Big)^{-1}\, P_{m_{nz}}^T\, {\bar P} \Big[T\big[{\bf J}_{\bf F}(\tau)\big]\Big]\bigg]\Bigg],
 \end{equation}

The following result based on \cite[Lemma 3.2]{ChaSor2010} provides an error bound for the SMDEIM approximation.

\begin{lemma}\label{Lemma3}
 Let ${\bf F} \in \mathbb{R}^n$ be a sparse column-wise vector representation of a matrix with $r>0$ nonzero entries located at $o_j,~j=1,2,..,r,~o_j\in\{1,2,..,n\}$ . Let ${\bar P} \in \mathbb{R}^{r
 \times n}$ be a truncated identity matrix with the nonzero elements ${\bar p}_{j,o_j} = 1,~j=1,..,r$ such that ${\bar P}\,{\bf F} \in \mathbb{R}^r$ comprises only
 the non-zero elements of ${\bf F}$. Let $V_{m_{nz}} = \{{{\bar v}_l}\}_{l=1}^m \in \mathbb{R}^{r\times m},~m>0$ be a collection of orthonormal vectors and
 \begin{equation}\label{eq::sparseMDEIM_approximation}
  {\hat {\bf F}} = {\bar P}^TV_{m_{nz}}\, \Big(P_{m_{nz}}^T\,V_{m_{nz}}\Big)^{-1}\, P_{m_{nz}}^T\,{\bar P}\,{\bf F},
 \end{equation}
 be the sparse matrix DEIM approximation of order $m\ll n$ for ${\bf F}$
with $P_{m_{nz}}= [{\bf e}_{\rho_1},..,{\bf e}_{\rho_m}]\in \mathbb{R}^{r\times m}$ being the output
of DEIM Algorithm \ref{alg::DEIM} having as input the basis $V_{m_{nz}}$. Then the following result holds
 \begin{equation}\label{eq::sparseMDEIM_error_bound}
\| {\bf F} - {\hat {\bf F}} \|_2 \leq  \|\Big(P_{m_{nz}}^T\,V_{m_{nz}}\Big)^{-1} \|_2 \, \|\Big( I - V_{m_{nz}}\,V_{m_{nz}}^T\Big)\, {\bar P}\,{\bf F}\|_2,
 \end{equation}
 where $\|\cdot \|_2$ is the appropriate vector or matrix $2-$ norm.
 \end{lemma}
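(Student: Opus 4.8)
The plan is to reduce the estimate to the classical DEIM error bound \cite[Lemma 3.2]{ChaSor2010} applied to a \emph{compressed} vector living in $\mathbb{R}^r$, and then to transport that estimate back to $\mathbb{R}^n$ using the norm-preserving properties of ${\bar P}$ and ${\bar P}^T$ already established in Lemma \ref{Lemma1}. First I would introduce the short-hand ${\bf g} = {\bar P}\,{\bf F} \in \mathbb{R}^r$ for the vector of nonzero entries of ${\bf F}$, together with its ordinary DEIM approximation ${\hat {\bf g}} = V_{m_{nz}}\big(P_{m_{nz}}^T V_{m_{nz}}\big)^{-1} P_{m_{nz}}^T\,{\bf g}$ in $\mathbb{R}^r$. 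With this notation the sparse approximation \eqref{eq::sparseMDEIM_approximation} factors cleanly as ${\hat {\bf F}} = {\bar P}^T {\hat {\bf g}}$, so that the whole construction is nothing more than a standard DEIM approximation carried out on the dense $r$-vector ${\bf g}$ and then lifted back to $\mathbb{R}^n$ by ${\bar P}^T$.

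The second step is to rewrite the error as the lift of the compressed error. Because ${\bf F}$ has its nonzeros exactly at the locations $o_j$, Lemma \ref{Lemma1} gives ${\bar P}^T {\bar P}\,{\bf F} = {\bf F}$, i.e.\ ${\bf F} = {\bar P}^T {\bf g}$; subtracting, I obtain ${\bf F} - {\hat {\bf F}} = {\bar P}^T({\bf g} - {\hat {\bf g}})$. The key observation, and the one that makes everything collapse, is that ${\bar P}^T$ is an isometry in the Euclidean norm: since Lemma \ref{Lemma1} also yields ${\bar P}\,{\bar P}^T = I_r$, for any ${\bf w} \in \mathbb{R}^r$ one has $\|{\bar P}^T {\bf w}\|_2^2 = {\bf w}^T {\bar P}\,{\bar P}^T {\bf w} = \|{\bf w}\|_2^2$. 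Consequently $\|{\bf F} - {\hat {\bf F}}\|_2 = \|{\bf g} - {\hat {\bf g}}\|_2$, and the problem is reduced to bounding the norm of the dense DEIM residual.

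The final step is to invoke the classical DEIM estimate in the space $\mathbb{R}^r$. Since $V_{m_{nz}}$ has orthonormal columns and $P_{m_{nz}}$ is a selection matrix produced by Algorithm \ref{alg::DEIM}, \cite[Lemma 3.2]{ChaSor2010} applies verbatim to ${\bf g}$ and gives $\|{\bf g} - {\hat {\bf g}}\|_2 \leq \big\|\big(P_{m_{nz}}^T V_{m_{nz}}\big)^{-1}\big\|_2\,\big\|\big(I - V_{m_{nz}} V_{m_{nz}}^T\big){\bf g}\big\|_2$. Substituting ${\bf g} = {\bar P}\,{\bf F}$ and combining with the isometry identity of the previous step yields \eqref{eq::sparseMDEIM_error_bound} at once.

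I do not anticipate a serious obstacle: the entire content lies in the reduction of the first two paragraphs, after which the bound is a direct consequence of the known DEIM estimate. The only point requiring care is the verification that ${\bar P}^T$ preserves the $2$-norm, which is immediate from ${\bar P}\,{\bar P}^T = I_r$. In particular one should resist estimating $\|{\bar P}^T\|_2$ and $\|{\bar P}\|_2$ separately and instead use the exact equality $\|{\bar P}^T{\bf w}\|_2 = \|{\bf w}\|_2$, since any loss of tightness there would spoil the recovery of the stated bound with constant $1$ in front.
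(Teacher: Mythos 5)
Your proof is correct and takes essentially the same route as the paper's: both reduce to the compressed vector ${\bar P}\,{\bf F}$, apply \cite[Lemma 3.2]{ChaSor2010} in $\mathbb{R}^r$ to its DEIM approximation, and lift the estimate back to $\mathbb{R}^n$ via ${\bar P}^T$ using Lemma \ref{Lemma1}. The only (minor) difference is that you use ${\bar P}\,{\bar P}^T = I_r$ to get the exact isometry $\|{\bf F}-{\hat {\bf F}}\|_2 = \|{\bar P}\,{\bf F}-\widehat{{\bar P}\,{\bf F}}\|_2$, whereas the paper bounds the lifted error by $\|{\bar P}^T\|_2 = 1$ times the compressed error; both give the stated bound.
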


\begin{proof} Since $P_{m_{nz}} \in \mathbb{R}^{r \times m}$ is the output of the DEIM algorithm \ref{alg::DEIM} with the input basis $V_{m_{nz}}$, then DEIM approximation
of ${\bar P}\,{\bf F}$ in the space spanned by $\{{{\bar v}_l}\}_{l=1}^m$ is:

 \begin{equation}\label{eq::dense_DEIM_error_bound}
\widehat{{\bar P}\,{\bf F}} = V_{m_{nz}}\,\Big(P_{m_{nz}}^T\,V_{m_{nz}}\Big)^{-1}\, P_{m_{nz}}^T\,{\bar P}\,{\bf F}.
 \end{equation}

According to \cite[Lemma 3.2]{ChaSor2010} we have the following bound
\begin{equation}
 \| {\bar P}\,{\bf F}-\widehat{{\bar P}\,{\bf F}}\| _2 \leq \| \Big(P_{m_{nz}}^TV_{m_{nz}}\Big)^{-1} \| _2 \, \| \Big( I - V_{m_{nz}}V_{m_{nz}}^T\Big){\bar P}\,{\bf F}\| _2
\end{equation}

From lemma \ref{Lemma1}  we obtain that
\begin{equation}
 {\bf F} = {\bar P}^T{\bar P}\,{\bf F},
\end{equation}
and by applying equations \eqref{eq::sparseMDEIM_approximation},\eqref{eq::dense_DEIM_error_bound} we get
\begin{equation}
 \| {\bf F} - {\hat {\bf F}} \|_2 = \|  {\bar P}^T {\bar P}\,{\bf F} - {\bar P}^T \widehat{{\bar P}\,{\bf F}}\| _2 \leq \| {\bar P}^T\| _2\, \| {\bar P}\,{\bf F}-\widehat{{\bar P}\,{\bf F}}\| _2.
\end{equation}
From the definition of matrix ${\bar P}$ we have $\| {\bar P}^T\|_2 = 1$ which completes the proof.
\end{proof}

The above approximation can be applied to any algebraic structure that can be reduced to a vector. The danger of directly applying the full or thin SVD to factorize the MDEIM
snapshots matrix consists in generating dense singular vectors due to the round-off errors which subsequently may lead to DEIM indexes pointing to zero element rows
of the Jacobian. The newly proposed version of MDEIM avoids interpolating the Jacobian zeros even for large number of DEIM indexes providing a fast
and efficient approximation of the Jacobian matrix with constant sparse structure.

Formula \eqref{eq::sparseMDEIM} can be adapted to approximate a Jacobian matrix with time variable sparse structure too. One possibility would be to identify
distinctive paterns of the Jacobians and form separate snapshots matrices that lead to different transformations ${\bar P}$ and local in time singular vectors $V_{m_{nz}}$
and DEIM indexes $P_{m_{nz}}$.  For large number of snapshots $n_s$, unsupervised learning tools such as biclustering methods \cite{Madeira_et_al_2004} can be employed to identify the distinctive
structures in data matrices. The other approach consists in selecting $r$ as the largest  number of nonzero locations available at one time step over the entire
time interval and the corresponding sparsity structure will define a new transformation ${\bar P}$.

Next the proposed Jacobian approximation will be tested in the framework of reduced order modeling. Traditionally the reduced Galerkin nonlinearities and
their derivatives computations are considered time consuming since they still depend on the dimension of the full space. The sparse matrix DEIM technique
proposes an efficient off-line stage in comparison with the traditional MDEIM method while maintaining the same computational complexity in the on-line stage.

\section{Reduced order modeling}\label{sec:ROM}

Our plan is to integrate the proposed Jacobian matrix approximations in the reduced order
modeling framework and generate faster off-line/on-line implicit reduced order models for large spatial configurations. We will consider the Proper Orthogonal Decomposition
technique combined with tensorial calculus \cite{Stefanescu_etal_forwardPOD_2014} as the main strategy for deriving the surrogates models and their reduced nonlinearities. For reduced Jacobian computations we propose six different techniques including MDEIM and its sparse variant, DEIM, tensorial, direct projection and directional derivative methods.

\subsection{Proper Orthogonal Decomposition}

Proper Orthogonal Decompositions has been used successfully in numerous applications such as compressible flow \citep{Rowley2004}, computational
fluid dynamics \citep{Kunisch_Volkwein_POD2002,Rowley2005,Willcox02balancedmodel}, and aerodynamics  \citep{Bui-thanh04aerodynamicdata}. It can be thought of as a Galerkin approximation in the spatial variable built from functions corresponding to the solution of the physical system at specified time instances. \citet{Noack2010} proposed a system reduction strategy for Galerkin models of fluid flows leading to dynamic models of lower order based on a partition in slow, dominant and fast modes. \citet{San_Iliescu2013} investigate several closure models for POD reduced order modeling of fluids flows and benchmarked against the fine resolution numerical simulation.

In what follows, we will only work with discrete inner products (Euclidian dot product) though continuous products may be employed too.  Generally, an atmospheric or oceanic model is usually governed by the following discrete dynamical system written in the residual form 
\begin{equation}
\rr^i(\x_{t_i})= \x_{t_i} - \x_{t_{i-1}} - \Delta t \, \F(\x_{t_i}) =0,\quad \text{for } i=1,..,N_t,~~~N_t\in \mathbb{N},
\label{eq:full_residual}
\end{equation}
where $\x_{t_i} \in \mathbb{R}^n$ is the state and $\rr^i : \mathbb{R}^n \to \mathbb{R}^n$ denotes the residual operator at time step $t_i$. Usually a Newton based approach is employed to solve \eqref{eq:full_residual}. Once the discrete solution is obtained, we define
the centering trajectory, shift mode, or mean field correction \citep{NAMTT03} ${\bf \bar x} =\frac{1}{N_t} \sum_{i=1}^{N_t} {\bf x}_{t_i}$. The method of POD consists in choosing a
complete orthonormal basis $U=\{{\bf u}_{i}\},~i=1,..,k;~k>0;~u_i \in \mathbb{R}^n;~U \in \mathbb{R}^{n\times k}$ such that the mean square error between ${\bf x}_{t_i}$ and
POD expansion 
\begin{equation} \label{eq::POD_expansion}
{\bf x}^{POD}_{t_i} = {\bf \bar x} + U{\bf \tilde x}_{t_i},~{\bf \tilde x}_{t_i} \in \mathbb{R}^k 
\end{equation}

is minimized on average. The POD dimension $k \ll n$ is appropriately chosen to capture the dynamics of the flow as described by Algorithm \ref{euclid}.

\begin{algorithm}
 \begin{algorithmic}[1]
 \State Calculate the mean ${\bf \bar x} =\frac{1}{N_t} \sum_{i=1}^{N_t} {\bf x}_{t_i}$.
 \State Set up the correlation matrix $K=[k_{ij}]_{i,j=1,..,N_t}$ where $k_{ij} = \langle{\bf x}_{t_i} - {\bf \bar x},{\bf x}_{t_j} - {\bf \bar x}\rangle_2$.
 \State Compute the eigenvalues $\lambda_1\geq \lambda_2\geq ...\lambda_{N_t}\geq 0$ and the corresponding orthogonal eigenvectors ${\bf v}^1,{\bf v}^2,..,{\bf v}^{N_t} \in \mathbb{R}^{N_t}$ of $K$.
 \State Set ${\bf u}_{i} = \sum_{j=1}^{N_t}{\bf v}^i_j({\bf x}_{t_i} - {\bf \bar x})$, $i=1,..,N_t.$ Then, ${\bf u}_{i}\in \mathbb{R}^n,~i=1,..,N_t$ are normalized to obtain an orthonormal basis.
\State Define $I(m)=\left( \sum_{i=1}^m \lambda_i\right) / \left( \sum_{i=1}^{N_t} \lambda_i\right)$ and choose $k$ such that $ k=\min \{I(m):I(m)\geq \gamma\}$ where $0 \leq \gamma \leq 1$ is the percentage of
total informations captured by the reduced space $\textrm{span}\{{\bf u}_1,{\bf u}_2,...,{\bf u}_k\}.$ Usually $\gamma$ is taken $0.99$.

 \end{algorithmic}
 \caption{POD basis construction}
 \label{euclid}
\end{algorithm}

 Singular value decomposition is another choice for POD basis construction and is less affected by numerical errors than the eigenvalue decomposition. Moreover, the SVD-based POD basis construction is more computational efficient since it decomposes the snapshots matrix whose condition number is the square root of the correlation matrix $K$ used in Algorithm \ref{euclid}. The snapshots matrix should also contain the difference quotients of the state variables in order to achieve optimal pointwise in time rates of convergence with respect to the number of POD basis functions \citep{Kunisch_Volkwein_2001,Iliescu_Wang_2013}.

The Galerkin projection of the full model equations onto the space spanned by the POD basis elements leads to the reduced order model
\begin{equation}
\trr^i(\tx_{t_i}) = \tx_{t_i} - \tx_{t_{i-1}} -\Delta t\, {\tilde{ \bf F}}(\tx_{t_i}),~i=1,..,N_t,
\end{equation}
where $\tx_{t_i}\in \mathbb{R}^{k}$ and ${\tilde{ \bf F}}: \mathbb{R}^k \to  \mathbb{R}^k, {\tilde{ \bf F}}(\tx_{t_i}) = U^T\F({\bf \bar x} + U\tx_{t_i})$ are the reduced state and nonlinear function, respectively and $\trr^i:\mathbb{R}^k \to \mathbb{R}^k$ denotes the reduced residual operator.

The majority of the current reduced discrete schemes available in the literature are usually explicit or at most semi-implicit in time thus avoiding
computing reduced Jacobians. Consequently most of the attempts to increase the efficiency of reduced order models were focused on providing efficient
off-line/on-line decoupled approximations for the nonlinear terms only. In this research we shift the attention toward generating efficient off-line/on-line
approximations of the reduced Jacobians.

\subsection{Reduced Jacobian computations}\label{Sub::Red_Jacobian_comp}

In this subsection we integrate the Jacobian approximations MDEIM and its sparse version discussed in Sections \ref{sub::MDEIM} and \ref{sec::SMDEIM} into reduced
order modeling framework to enable fast and accurate estimations of the reduced Jacobians of the nonlinear terms. Here is the first time when these strategies
are employed for construction of implicit reduced order schemes. \citet{Wirtz2012} introduced MDEIM  to develop an efficient off-line/on-line approximation of
logarithmic Lipschitz constants of linear functions and delivered a-posteriori error estimates of DEIM reduced nonlinear dynamical system. Along with the dense
and sparse matrix DEIM approximation methods we describe the current available techniques used to compute the reduced Jacobians. We will begin with the exact formulations
and then continue with the approximation techniques including the novel MDEIM expressions.

\paragraph{Direct projection method} The simplest approach for calculating the reduced Jacobian ${\bf J}_{\tilde{\bf F}}(\tx_{t_i})$ is to follow the analytical
route. The derivatives of the function ${\tilde{ \bf F}}(\tx_{t_i})$ with respect to the $\tx_{t_i}$ are computed using the chain rule and we get
\begin{equation}
\label{eq::red_jac_nonlin_func}
{\bf J}_{\tilde{\bf F}}(\tx_{t_i}) = U^T\, {\bf J}_{\bf F}({\bf \bar x} + U{\bf \tilde x}_{t_i})\, U,\qquad {\bf J}_{\tilde{\bf F}}(\tx_{t_i}) \in \mathbb{R}^{k \times k},
\end{equation}
where
\begin{equation*}
 {\bf J}_{\tilde{\bf F}}(\tx_{t_i}) = \frac{\partial \tilde{\bf F}}{\partial \tx_{t_i}}(\tx_{t_i}), \qquad  {\bf J}_{{\bf F}}({\bf \bar x} + U{\bf \tilde x}_{t_i}) = \frac{\partial{\bf F}}{\partial \x_{t_i}}({\bf \bar x} + U{\bf \tilde x}_{t_i})
\end{equation*}

There is no off-line cost for this strategy since all the computations are performed on-line. At every time step the full Jacobian $ {\bf J}_{{\bf F}}({\bf \bar x} + U{\bf \tilde x}_{t_i})$ is evaluated using the reduced solution and then projected to the reduced space.
Suppose the complexity for evaluating the $r$ nonzero elements of the full Jacobian is ${\mathcal O}(\alpha(r)),$ where $\alpha$ is some function of $r$,
then the on-line computational complexity of ${\bf J}_{\tilde{\bf F}}(\tx_{t_i})$ is of order of ${\mathcal O}(\alpha(r)) + nk + rk + nk^2 )$ in case the sparse
structure of the Jacobian is expoited. Unfortunately this approach is extremely costly and for large number of mesh points, it leads to slower
reduced order models in comparison with the high fidelity versions.

\paragraph{Tensorial method}
 For $\F$ containing only polynomial nonlinearities, tensorial calculus can be applied to compute ${\bf J}_{{\tilde \F}}(\tx_{t_i})$ and most of the
 required calculations can be translated to the off-line stage, making the on-line phase independent of $n$. To emphasize the tensorial procedure we assume
 that $\F$ presents only a quadratic nonlinearity, thus, at time $t_i$, $\F = \x_{t_i}\odot\x_{t_i},~\x_{t_i} \in \mathbb{R}^n$ and $\odot$ is the componentwise
 operator. The Jacobian of $\F$ calculated at $\x_{t_i}$  is a diagonal matrix

  \begin{equation*}
  \JF(\xti) = \textnormal{diag}\left\{2\x_{t_i}^1,2\x_{t_i}^2,...,2\x_{t_i}^n\right\} \in \mathbb{R}^{n \times n},
  \end{equation*}
where $\xti^k$ represents the $k$ component of the vector $\xti$. Then according to \eqref{eq::red_jac_nonlin_func} we get
\begin{equation*}
\JTF(\txti) = 2\,U^T\, \textnormal{diag}\left\{{\bar {\bf x}} + U(1,:)\txti,{\bar {\bf x}} + U(2,:)\txti,...,{\bar {\bf x}} + U(n,:)\txti\right\}\, U,
\end{equation*}
and subsequently

\begin{equation}
{\bf J}_{{\tilde \F}}(\tx_{t_i}) = {T}_1 + {T}_2^i,
\end{equation}
where $T_1 = 2U^T\big(\underbrace{[{\bf \bar x}\quad {\bf \bar x} \quad \cdot \cdot \quad {\bf \bar x}]}_{k \text{ times}} \odot U \big) \in \mathbb{R}^{k \times k}$
and $T_2^i\in \mathbb{R}^{k \times k}$, $T_2^i(j,l) = \sum_{p=1}^k\tx_{t_i}^p \cdot\big(g^j_{lp} + g^j_{pl}\big),~j,l=1,..,k.$ Tensor $G \in \mathbb{R}^
{k \times k \times k}$ is defined by 
\begin{equation}\label{eq::tensor_g}
g^j_{lp} = U(s,j)\cdot U(s,l) \cdot U(s,p),~j,l,p=1,..,k 
\end{equation}
and $\txti^p$ is the $p$ component of $\txti$. Now $T_1$ and $G$
are computed off-line and the computational complexity is of order ${\mathcal O}(k^3n)$. For the on-line stage ${T}_2^i$ is required and its computational complexity
is ${\mathcal O}(k^3)$. In the case of a polynomial nonlinearity of order $p$ the computational complexity for calculating the reduced Jacobian using tensorial calculus in the
on-line stage is ${\mathcal O}(k^{p+1})$ while the off-line components require ${\mathcal O}(k^{p+1})$ flops. We already applied this strategy to generate
implicit reduced SWE models in \cite{Stefanescu2013,Stefanescu_etal_forwardPOD_2014}.

\paragraph{Directional derivatives method} One can decrease the computational load of the direct projection method by approximating
${\bf J}_{\bf F}({\bf \bar x} + U{\bf \tilde x}_{t_i})$ using the directional derivatives of $\F$ in the directions of POD basis functions $\bu_j = U(:,j)$, for
$j=1,2..,k$
\begin{equation}
\frac{\partial \F}{\partial \x_{t_i}}({\bf \bar x} + U{\bf \tilde x}_{t_i})\bu_j = \left(\begin{array}{c}
           \nabla \F_1 ({\bf \bar x} + U{\bf \tilde x}_{t_i})\\
           \nabla \F_2 ({\bf \bar x} + U{\bf \tilde x}_{t_i})\\
           \cdot \\
           \nabla \F_n ({\bf \bar x} + U{\bf \tilde x}_{t_i})\end{array}\right) \bu_j =
           \left(\begin{array}{c}
           \nabla_{\bu_j} \F_1 ({\bf \bar x} + U{\bf \tilde x}_{t_i})\\
           \nabla_{\bu_j} \F_2 ({\bf \bar x} + U{\bf \tilde x}_{t_i})\\
           \cdot \\
           \nabla_{\bu_j} \F_n ({\bf \bar x} + U{\bf \tilde x}_{t_i})\end{array}\right).
\end{equation}
The vector valued function is written using its scalar components
$\F = ( \F_1, \F_2, \cdot \cdot, \F_n)^T,$
$\F_l:\mathbb{R}^n \to \mathbb{R},~l=1,..n$ and their gradients $\nabla \F_l$ belong to $\mathbb{R}^{1\times n}$. By $\nabla_{\bu_j} \F_l$ we denote
the directional derivative of $\F_l$ in the direction of $\bu_j$.
\begin{equation}
\nabla_{\bu_j}\F_l \approx \frac{\F_l({\bf \bar x} + U{\bf \tilde x}_{t_i}+h\bu_j)-\F_l({\bf \bar x} + U{\bf \tilde x}_{t_i})}{h}. \label{eq::direct_deriv}
\end{equation}
If the cost for evaluating the $n$ scalar components of $F$ is ${\mathcal O}(\alpha(n))$, the computational complexity of this method includes effort in the on-line
stage only and is of order of ${\mathcal O}(k\alpha(n) + nk^2).$ The accuracy level depends on the values of $h$. \citet{Vermeulen2006}
linearized a high-order nonlinear model and their reduced model was obtained using $h=0.01$.
This strategy is a non-intrusive approach allowing for the reduced Jacobian computation by making use of only the high-fidelity function.

\paragraph{DEIM method} \citet{Cha2008} noticed that the Jacobian of a vector valued function can be approximated using the POD/DEIM approximation of the
function itself \eqref{eq::Jacobian_DEIM_approx}, i.e.
\begin{equation}
 {\bf J}_{{\tilde \F}}(\tx_{t_i}) \approx \underbrace{U^TV(P^TV)^{-1}}_{\text{precomputed}:k\times m}\, \underbrace{P^T\JF({\bf \bar x} + U{\bf \tilde x}_{t_i})}_{m \times n}\, \underbrace{U}_{n\times k}.
\label{eq::reduced_Jacob_DEIM}
\end{equation}
Here $m$ is the number of DEIM points and their locations given by $P^T$ are obtained by applying the DEIM algorithm \ref{alg::DEIM} with nonlinear term basis
$V$ as input. Typically the Jacobians of large-scale problems are sparse, and then the approximation \eqref{eq::reduced_Jacob_DEIM} will
be very efficient. Assuming an average of $\mu$ nonzero Jacobian elements per each row, the reduced derivatives calculations during the on-line stage require a computational
complexity of order ${\mathcal O}(\mu k^2m + mk^2 + \alpha(\mu m))$, where ${\mathcal O}(\alpha(\mu m))$ stands for the cost of evaluating the $\mu m$ full Jacobian
entries. More details on the sparse procedure are available in \cite{ChaSor2010}.

The off-line cost of computing DEIM reduced Jacobian arises from the singular value decomposition of the nonlinear term snapshots $\Big({\mathcal O}(n\cdot
n_s^2)\Big)$ , DEIM algorithm for selecting the
interpolation points $\Big({\mathcal O}(m^2\cdot n +
m^3)\Big)$ \citep{Drmac_Gugercin2015} and matrix operations in \eqref{eq::reduced_Jacob_DEIM} $\Big({\mathcal O}(m^3 + n \cdot m^2 + k^2 \cdot m)\Big)$. No additional effort is needed
in this stage in the case the DEIM method is employed to approximate the reduced nonlinear term too.

\paragraph{Matrix DEIM method}

By using MDEIM Jacobian approximation \eqref{eq::MDEIM} inside of \eqref{eq::red_jac_nonlin_func} we obtain the reduced Jacobian approximation
\begin{equation}
{\bf J}_{\tilde{\bf F}}(\tx_{t_i}) \approx U^T\, T^{-1}\left[V_m\, \left(P_m^TV_m\right)^{-1}\, P_m^T\, T[{\bf J}_{\bf F}({\bf \bar x} + U{\bf \tilde x}_{t_i})]\right]\,U.
\end{equation}
From here, one can easily prove that
\begin{equation}
{\bf J}_{\tilde{\bf F}}(\tx_{t_i}) \approx {\tilde T}^{-1}\left[\underbrace{C}_{k^2\times n^2}\cdot \underbrace{V_m\, \left(P_m^TV_m\right)^{-1}}_{n^2\times m}\,\cdot \underbrace{P_m^T\, T[{\bf J}_{\bf F}({\bf \bar x} + U{\bf \tilde x}_{t_i})]}_{m \times 1}\right], \label{eq::red_MDEIM}
\end{equation}
where transformation ${B} \rightarrow {\tilde T}[{B}]$ maps the entries of a matrix ${B} \in \mathbb{R}^{k \times k }$ column-wise into a vector of the size $\mathbb{R}^{k^2}$ and ${\tilde T}^{-1}$ is its inverse. Matrix $C$ is defined bellow
\begin{equation}
C(i,:) = T[{\bf u}_j{\bf u}_l^T]^T,\quad i=1,..,k^2,
\end{equation}
with each $i$ corresponding to a pair of indexes $(j,l),~j,l=1,..,k$. The transformation $T$ is defined in Section \ref{sub::MDEIM}.

Now the complexity of the off-line stage of MDEIM is dominated by the computation of  the matrix $C$ and its product with $V_m\, \left(P_m^TV_m\right)^{-1}$ which requires ${\mathcal O}(n^4\cdot k^2 + n^2 \cdot m \dot k^2 + n^2 \cdot m^2 + m^3)$. Other costs arise from SVD calculation of the nonlinear term snapshots $\Big({\mathcal O}(n^2\cdot
n_s^2)\Big)$ and DEIM algorithm for selecting the
interpolation indexes $\Big({\mathcal O}(m^2\cdot n^2 +
m^3)\Big)$ . The on-line computational complexity of ${\bf J}_{\tilde{\bf F}}(\tx_{t_i})$ is of ${\mathcal O}(k^2\cdot m)$ plus the cost of evaluating $m$ entries of $\JF ({\bf \bar x} + U\tx_{t_i})$ that needs ${\mathcal O}(\alpha(m))$ flops.

\paragraph{Sparse Matrix DEIM method}

The sparse version of the MDEIM method was derived to alleviate the memory requirement of storing jacobian snapshots ${\bf J}_{\F}(\x_{t_i}),~i=1,..,n_s$ of $\mathbb{R}^{n^2}$ size. By applying SMDEIM approximation \eqref{eq::sparseMDEIM} inside equation  \eqref{eq::red_jac_nonlin_func}, the computational complexity for calculating the reduced Jacobians off-line components will depend only on the number of nonzero entries of the high-fidelity Jacobian, dimension of POD basis and number of DEIM indexes,

\begin{equation}
{\bf J}_{\tilde{\bf F}}(\tx_{t_i}) \approx U^T T^{-1}\Bigg[{\bar P}^T\bigg[ V_{m_{nz}}\Big(P_{m_{nz}}^TV_{m_{nz}}\Big)^{-1}\, P_{m_{nz}}^T\, {\bar P} \Big[T\big[{\bf J}_{\bf F}({\bf \bar x} + U{\bf \tilde x}_{t_i})\big]\Big]\bigg]\Bigg]U.
\end{equation}
Next, it follows that
\begin{equation}
{\bf J}_{\tilde{\bf F}}(\tx_{t_i}) \approx {\tilde T}^{-1}\bigg[\underbrace{\tilde C}_{k^2 \times r} \underbrace{V_{m_{nz}}\Big(P_{m_{nz}}^TV_{m_{nz}}\Big)^{-1}}_{r \times m}\, \underbrace{P_{m_{nz}}^T\, {\bar P} \Big[T\big[{\bf J}_{\bf F}({\bf \bar x} + U{\bf \tilde x}_{t_i})\big]}_{m \times 1}\Big]\bigg], \label{eq::red_SMDEIM}
\end{equation}
where the rows of matrix ${\tilde C}$ are computed with the following formula
\begin{equation}
{\tilde C}(i,:) = {\bf u}_j({\bf coef1})\odot{\bf u}_l({\bf coef2}),\quad i=1,..,k^2.
\end{equation}
Each $i$ corresponds to a pair of indexes $(j,l),~j,l=1,..,k$ and vectors ${\bf coef1},~{\bf coef2}\in \mathbb{R}^r$ store the ${\bf J}_{\F}$ matrix column and row indexes where nonzero entries are found.

Now the cost for assemblying the matrix $\tilde C$ and its product with  $ V_{m_{nz}}\Big(P_{m_{nz}}^TV_{m_{nz}}\Big)^{-1}$ is of order of ${\mathcal O}(r\cdot k^2 + r\cdot m \cdot k^2 + r\cdot m^2 + m^3)$. In addition the off-line stage cost includes the computation of singular value decomposition of the  dense nonlinear snapshots ${\mathcal O}(
r\cdot n_s^2)$ and DEIM indexes via Algorithm\ref{alg::DEIM} -- ${\mathcal O}(m^2\cdot r + m^3)$. The on-line cost of ${\bf J}_{\tilde{\bf F}}(\tx_{t_i})$ is the same as in the case of  MDEIM approximation and counts ${\mathcal O}(k^2\cdot m+\alpha(m))$ flops. Tables \ref{table_complexityI},\ref{table_complexityII} resume the findings of this section.

\begingroup
\begin{table}[h]
\begin{center}
\scalebox{0.93}{
\begin{tabular}[h]{|l|l|l|l|l|}
\hline
& & \textrm{MDEIM} & \textrm{SMDEIM} & \textrm{DEIM} \\ \hline
& \textrm{SVD}&$\mathcal{O}(n^2\cdot n_s^2)$ & $\mathcal{O}(r \cdot n_s^2)$ & $\mathcal{O}(n \cdot n_s^2)$ \\ \textrm{Off-line}& \textrm{DEIM indexes} &  $\mathcal{O}(m^2\cdot n^2 + m^3)$ & $\mathcal{O}(m^2\cdot r + m^3)$ & $\mathcal{O}(m^2\cdot n + m^3)$ \\
& \textrm{other} & $\mathcal{O}(n^4\cdot k^2 + n^2 \cdot m \cdot k^2 +$ & ${\mathcal O}(r\cdot k^2 + r\cdot m \cdot k^2 + $ & ${\mathcal O}(m^3 + n \cdot m^2 + $ \\ 
& & $~~~~~n^2 \cdot m^2 + m^3)$ & $~~~~~r\cdot m^2 + m^3)$ & $~~~~~~k^2 \cdot m)$ \\ \hline
\textrm{On-line} & & $\mathcal{O}(k^2\cdot m + \alpha(m))$ & $\mathcal{O}(k^2\cdot m + \alpha(m))$ & $\mathcal{O}(\mu k^2m + mk^2 + \alpha(\mu m))$ \\ \hline
\hline
\end{tabular}}
\end{center}
\caption{Computational complexities of the reduced Jacobians.  $n,~n_s,~r,~\mu,~m$ and $k$ denote the
numbers of independent variables, snapshots of the Jacobian, nonzero entries of a Jacobian snapshot, average nonzeros entries of a Jacobian snapshot per row, DEIM indexes and size of the POD basis.  By $\alpha(p)$ we mean the cost of evaluating p entries of the high-fidelity Jacobian linearized at ${\bf \bar x} + U{\bf \tilde x}_{t_i}.$}
\label{table_complexityI}
\end{table} %
\endgroup%

\begingroup
\begin{table}[h]
\begin{center}
\begin{tabular}[h]{|c|c|c|c|}
\hline
& \textrm{tensorial} & \textrm{Direct proj.} & \textrm{Directional deriv.} \\ \hline 
\textrm{Off-line} &  $\mathcal{O}(k^{p+1}\cdot n)$ & $-$ & $-$ \\ \hline
\textrm{On-line} & $\mathcal{O}(k^{p+1})$ & ${\mathcal O}(\alpha(r)) + nk + rk + nk^2 )$ & ${\mathcal O}(k\alpha(n) + nk^2)$ \\ \hline
\hline
\end{tabular}
\end{center}
\caption{Computational complexities of the reduced Jacobians. The results for tensorial method correspond to a $p^{\textrm{th}}$ polinomial nonlinearity.
By  $\alpha(p)$ we denote the cost for evaluating $p$ entries of the high-fidelity Jacobian at ${\bf \bar x} + U{\bf \tilde x}_{t_i}$ or function $\F$ for direct 
projection method or directional derivative approach, respectively.}\label{table_complexityII}
\end{table} %
\endgroup%

The most expensive on-line stage is proposed by the direct projection technique which is an exact method. By transferring some of the calculations to the off-line stage, the other exact approach, tensorial method becomes competive against the DEIM based techniques but only for quadratic nonlinearities \citep{Stefanescu_etal_forwardPOD_2014}. As an approximation method, one should expect that directional derivative approximation would be faster than the exact methods. This is not the case since its complexity depends on the number of space points $n$. This technique is preferred for situation when the partial derivatives of the function are difficult to compute analitically and only function evaluations are needed. The choice of $h$ in \eqref{eq::direct_deriv} must be careful considered. Among all the proposed techniques, MDEIM and SMDEIM own the fastest on-line stage. DEIM method is much faster in the off-line stage but it is a price paid at the expense of the Jacobian accuracy. DEIM method for the Jacobian approximation guarantees accurate entrees only along the rows indicated by the DEIM points. This is not the case for the MDEIM and SMDEIM formulations which preserve the Jacobian accuracy globally as DEIM method does for the function approximation. The newly introduced sparse version of the MDEIM technique now poses the properties required for large-scale simulations with an off-line cost depending only on the number of Jacobian nonzero elements, POD basis dimension and number of DEIM points.

\section{Numerical Experiments}\label{sec:Numerical_resuls}

We consider two nonlinear test problems, the 1D Burgers and the 2D Shallow Water Equations, and first compare the accuracy of various greedy based Jacobian
approximations described in Sections \ref{Sec::Greedy_alg_Jacobians} and \ref{sec::SMDEIM}. Next, we analyze the performance of the novel reduced
order models obtained by integrating MDEIM and SMDEIM into POD/ROM framework against the available techniques already existing in the literature and discussed in
Section \ref{Sub::Red_Jacobian_comp}. Both the high-fidelity and the corresponding reduced order models make
use of the same time discretization schemes thus avoiding additional errors inside the surrogate models solutions. In all of the experiments the nonlinear
terms are computed using tensorial calculus.

\subsection{One-dimensional Burgers' equation}\label{subsec:Burgers}

\subsubsection{Numerical scheme}\label{subsec:Burgers}

Burgers' equation is a fundamental partial differential equation from fluid mechanics. It occurs in various areas of applied mathematics. For a given
velocity $u$ and viscosity coefficient $\mu$, the model considered here has the following form

\begin{equation}
\frac{\partial u}{\partial t} + u\frac{\partial u}{\partial x} = \mu \frac{\partial^2 u}{\partial x^2}, \quad x \in [0,L],~t \in (0,t_f]\label{eqn:Burgers-pde}.
\end{equation}

We assume Dirichlet homogeneous boundary conditions $u(0,t) = u(L,t) = 0,~t \in (0,t_f]$ and as initial conditions we use a seventh degree polynomial depicted in Figure \ref{Fig::1D-Burgers-IC}

\begin{figure}[h]
  \centering
\includegraphics[scale=0.37]{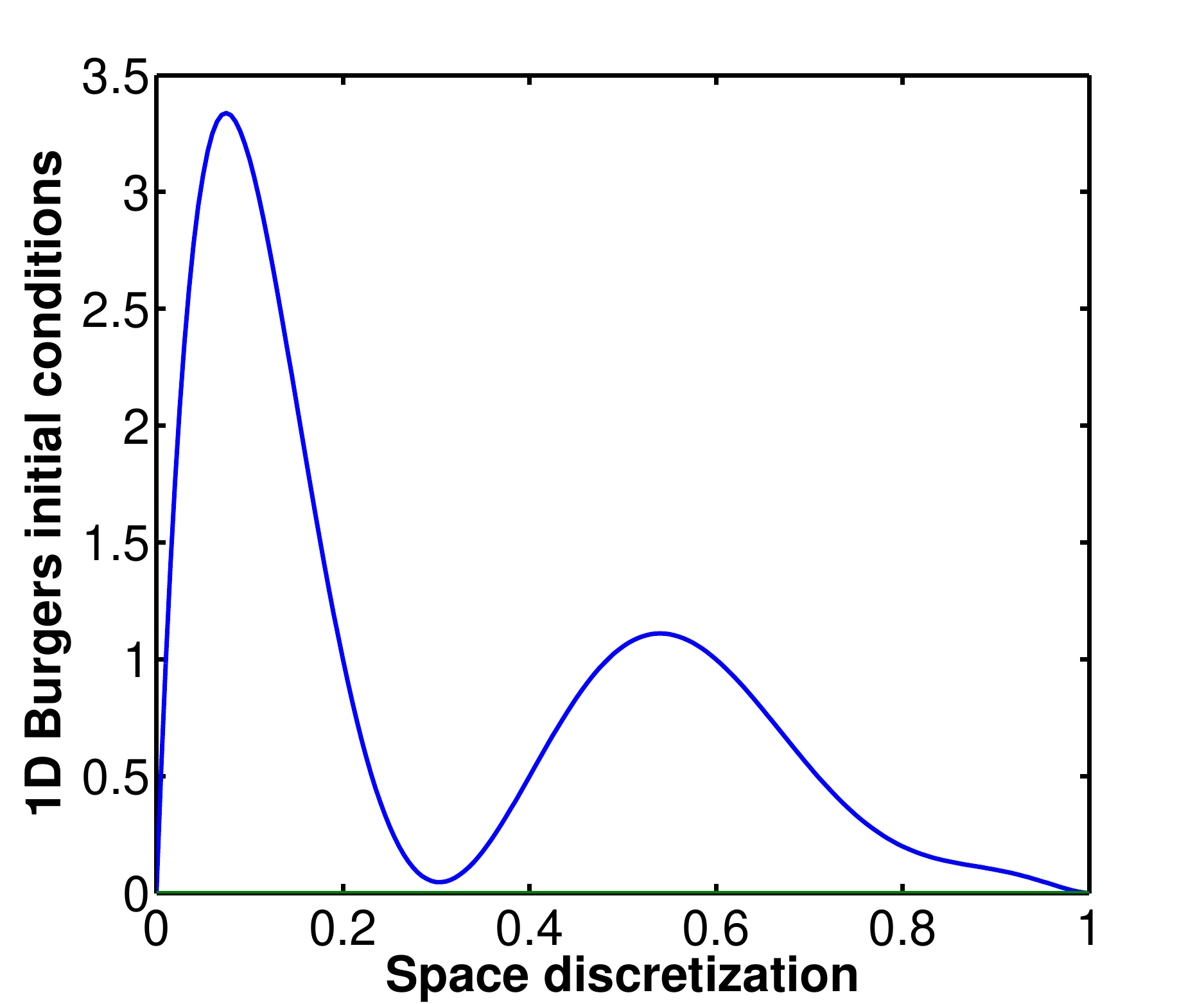}
\caption{Seventh order polynomial used as initial conditions for 1D Burgers model.
\label{Fig::1D-Burgers-IC}}
\end{figure}
Let us introduce a mesh of $n$ equidistant space points on $[0,L]$, with $\Delta x=L/(n-1)$. For the time interval $[0,t_{\rm f}]$ we employ $N_t$ equally distributed points with $\Delta t=t_{\rm f}/(N_t-1)$. By defining the vector of unknown variables of dimension $n-2$ (we eliminate the known boundaries) 
with ${\boldsymbol u}(t_N)\approx [u(x_i,t_N)]_{i=1,2,..,n-2} \in \mathbb{R}^{n-2},~N=1,2,..N_t,$ the semi-discrete version of 1D Burgers model \eqref{eqn:Burgers-pde} is:
\begin{equation}\label{eqn:Burgers-sd}
 {\bf u}'  =  -{\bf u}\odot A_x{\boldsymbol u} + \mu A_{xx}{\boldsymbol u},
\end{equation}
where ${\bf u}'$ denotes the semi-discrete time derivative of ${\bf u}$. $A_x,~A_{xx}\in \mathbb{R}^{(n-2)\times (n-2)}$ are the central difference first-order and second-order space derivatives operators which include also the boundary conditions.

The viscosity parameter is set to $\mu = 0.01$, the final time $t_{\rm f} = 2$ and $L=1$. The backward Euler method is employed for time discretization and it is implemented in Matlab. The nonlinear algebraic systems are solved using Newton-Raphson method and the maximum number of Newton iterations allowed per each time step is set to $50$. The solution is considered accurate enough when the euclidian norm of the residual is less than $10^{-10}$.

\subsubsection{Greedy based Jacobian approximation techniques}

Here we discuss different aspects characterizing the newly introduced SMDEIM method and compare its properties including spectrum of snapshots matrix, locations of
DEIM indexes and approximation accuracy against the ones proposed by MDEIM and DEIM methods using the high-fidelity framework.

In order to generate the greedy based Jacobian approximations we use $401$ time snapshots, i.e. $N_t =401$. Thus we have $401$ model Jacobian (including the linear
and non-linear terms derivatives) snapshots rearranged in vector format and $401$ snapshots of the advection term of the 1D-Burgers model \eqref{eqn:Burgers-sd}. For the SMDEIM method, the matrix of snapshots belongs to $\mathbb{R}^{[3(n-4)+4]\times 401}$ and 
counts only the nonzero entries of the MDEIM snapshots matrix of dimensions $\mathbb{R}^{(n-2)^2\times 401}$. The nonlinear term snapshots matrix contains $(n-2)\times 401$ dense elements. The numerical experiments for this subsection are performed using a mesh of $n=201$ space points.

Figure \ref{Fig::SVD_MDEIM_versions} illustrates the singular values of the SMDEIM and MDEIM snapshots matrices which are very similar. As expected, the computational time for obtaining the SVD decompositions of the SMDEIM matrix is $4$ times smaller than in the case of MDEIM matrix factorization.

\begin{figure}[h]
  \centering
\includegraphics[scale=0.43]{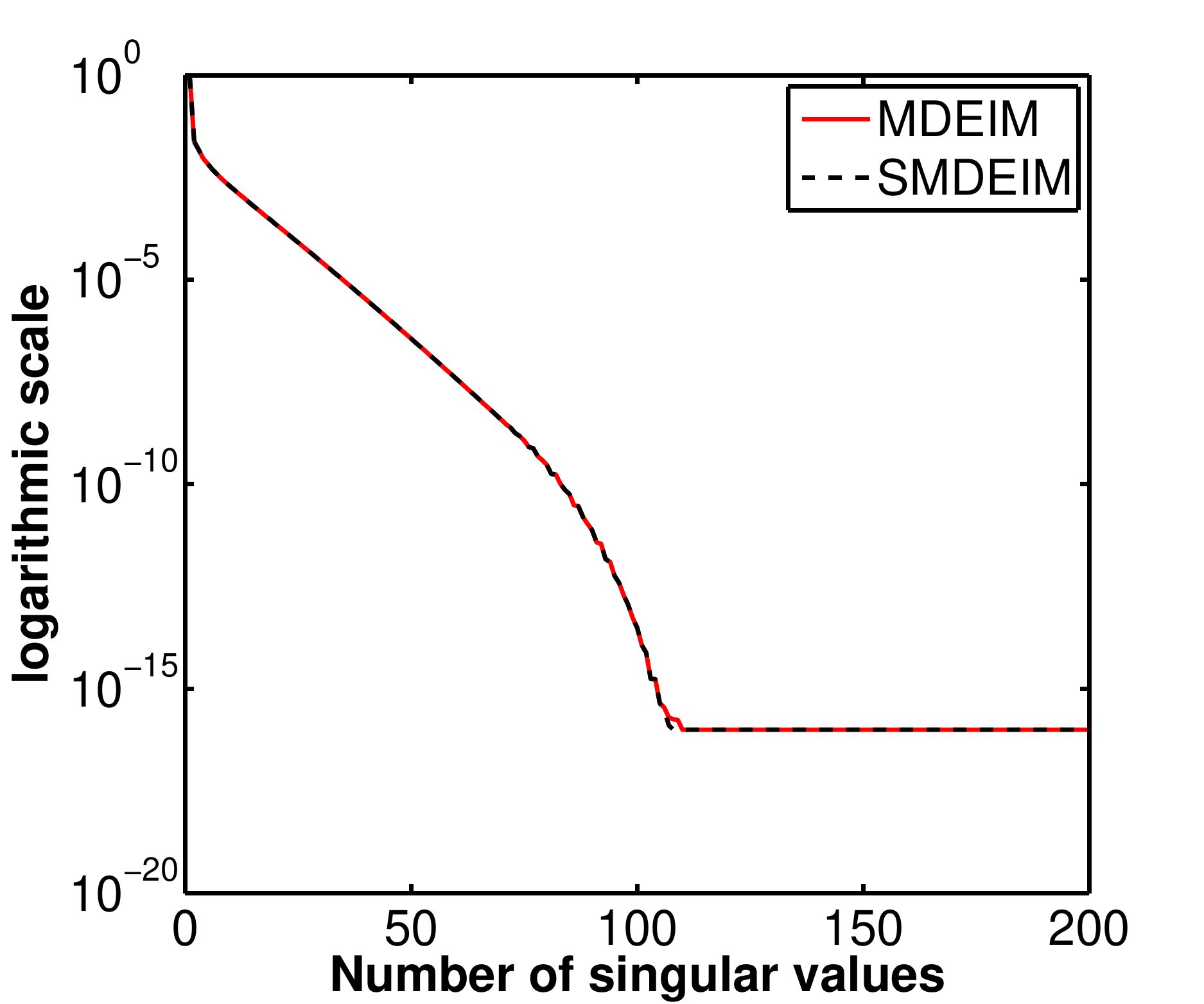}
\caption{Singular values of MDEIM and SMDEIM snapshots matrices 
\label{Fig::SVD_MDEIM_versions}}
\end{figure}
   
Moreover, each singular vector of the MDEIM snapshot matrix has $989$ nonzero entries while a Jacobian snapshot contains only $595$ nonzero elements. 
This is a very well know behaviour, since the singular vectors of a sparse matrix are usually denser. However, in our case the structure of the MDEIM snapshots is regular having entire null rows owing to the Jacobian pattern and the additional nonzero singular vectors artifacts arise from the round-off errors introduced by the matrix factorization. This can be noticed in Figure \ref{Fig::MDEIM_location_points} where the interpolation indexes generated by algorithm \ref{alg::DEIM} using MDEIM and SMDEIM singular vectors are depicted. Figure \ref{Fig::MDEIM_location_points}(a) shows a perfect match of the first $20$ DEIM indexes that correspond to singular values ranging from $467.43$ to $0.107$. Figure \ref{Fig::MDEIM_location_points}(b) presents the DEIM indexes for the $100^{\textrm{th}}-120^{\textrm{th}}$ singular values with ranges between $1.41e^{-11}$ to $4.65e^{-14}$. At this low magnitude DEIM indexes mismatches can be noticed. We remark the multiple indexes of the MDEIM singular vectors outside the diagonal band that point to zero entries of the Jacobian snapshots. The first mismatch occurs at the $84^{th}$ singular value where the level of energ
y is $2.66e^{-8}$. However for most of the applications there is no need to select so many DEIM indexes including those corresponding to such small singular 
vectors, perhaps except simulating turbulence. For finding $20$ pair of DEIM indexes, algorithm \ref{alg::DEIM} was $24$ times faster when using the SMDEIM singular vectors.

\begin{figure}[h]
  \centering
  \subfigure[1D-Burgers - $1^{\textrm{st}}-20^{\textrm{th}}$ interpolation indexes]{\includegraphics[scale=0.37]{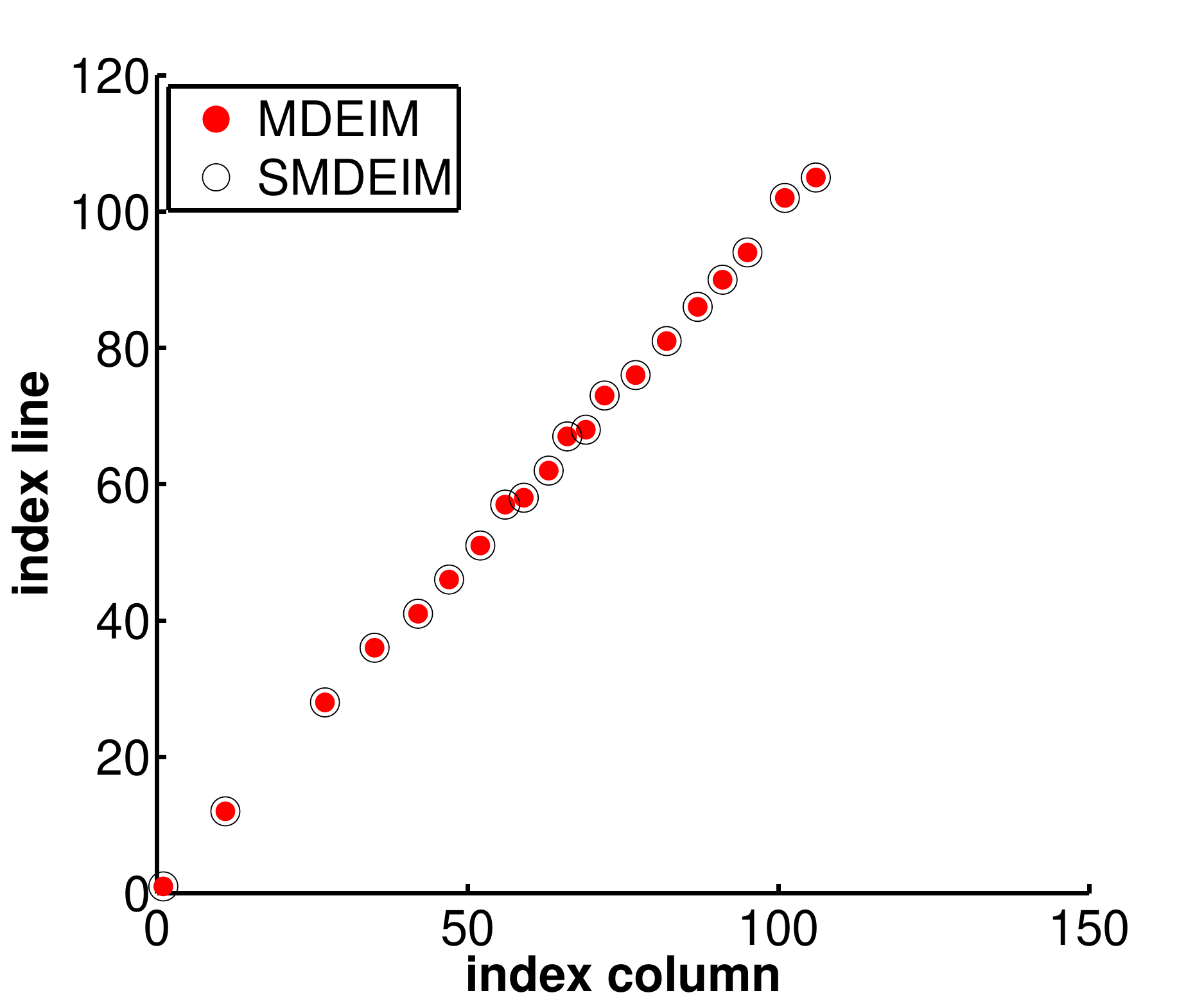}}
  \subfigure[1D-Burgers - $100^{\textrm{th}}-120^{\textrm{th}}$ interpolation indexes]{\includegraphics[scale=0.37]{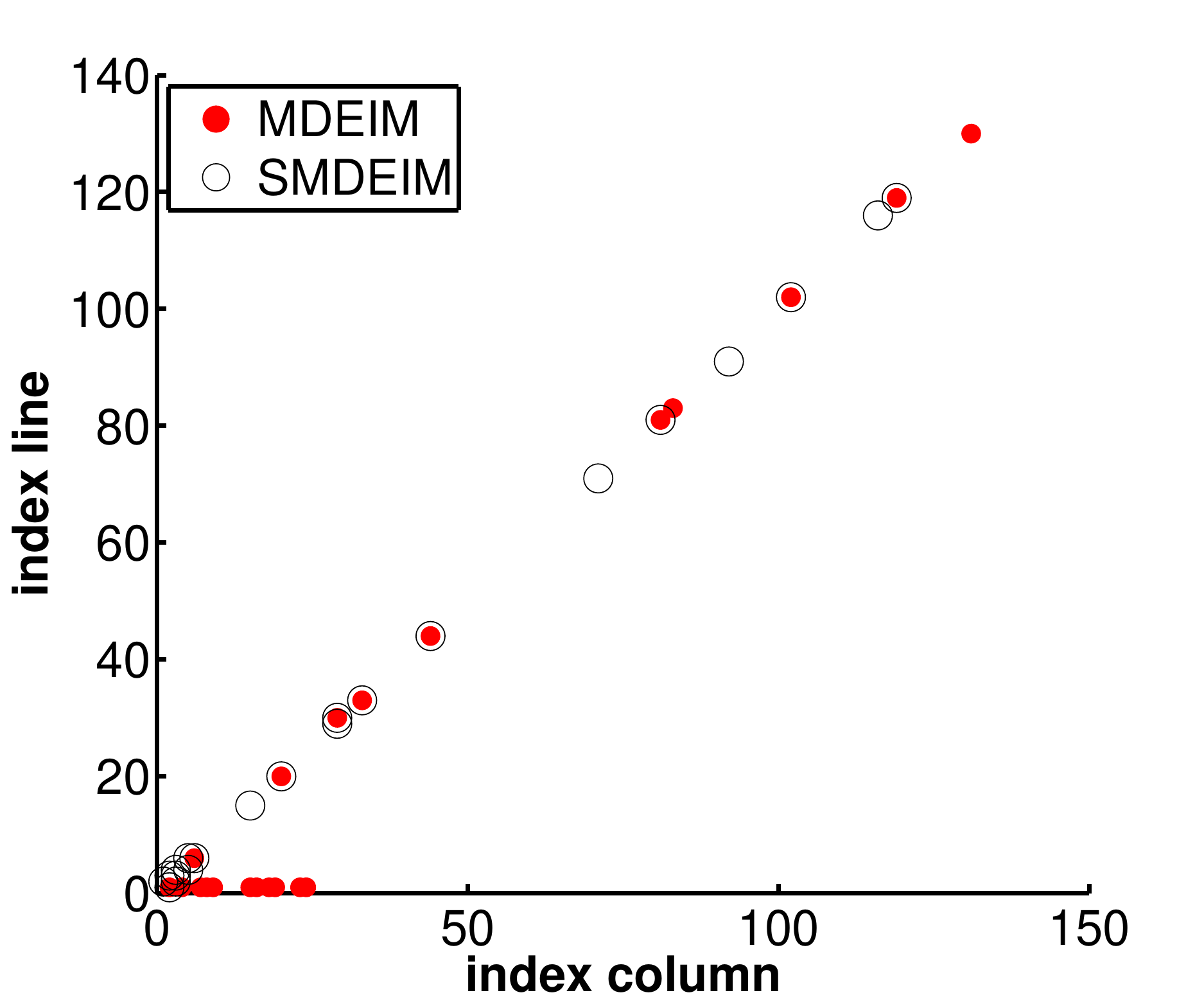}}
\caption{Localization of DEIM indexes using MDEIM and SMDEIM singular vectors 
\label{Fig::MDEIM_location_points}}
\end{figure}
   
Finally we compare the accuracy levels of DEIM \eqref{eq::Jacobian_DEIM_approx}, MDEIM \eqref{eq::MDEIM}  and SMDEIM \eqref{eq::sparseMDEIM} approximations of 
the 1D Burgers model Jacobian at initial time. The DEIM based Jacobian approximation \eqref{eq::Jacobian_DEIM_approx} requires adding the derivatives of the linear terms while for the matrix DEIM approximations the linear and nonlinear partial derivatives are both included into the snapshots. 
Figure \ref{Fig::Full_Jacobian_error} depicts the error of the Jacobian approximations using the Frobenius norm (left panel) and the absolute value of the discrepancies in the largest singular value of the matrices approximations and its true representation (right panel). The MDEIM and SMDEIM Jacobian approximations accuracy is improved with the increase of the DEIM indexes. This is not the case for the DEIM approximations which preserves the accuracy only for the rows $P^T{\bf J}_{\bf F}(\tau)$ selected by the interpolation matrix in \eqref{eq::Jacobian_DEIM_approx}. The SMDEIM Jacobian approximation quality is similar as the MDEIM proposal but it is more advantageous since it is obtained at much lower computational cost.

\begin{figure}[h]
  \centering
  \subfigure[1D-Burgers - $201$ space points indexes]{\includegraphics[scale=0.37]{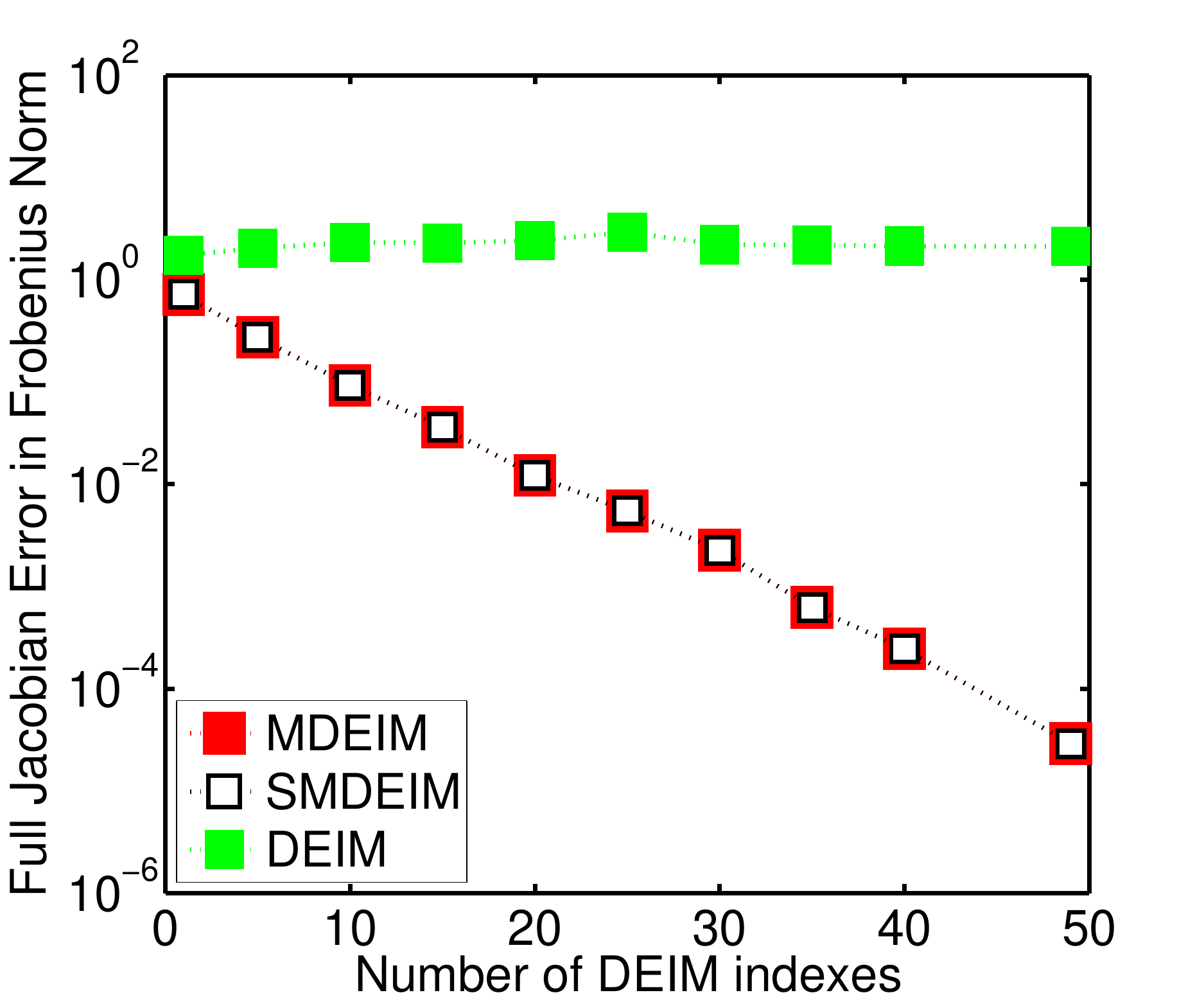}}
  \subfigure[1D-Burgers - $201$  space points ]{\includegraphics[scale=0.37]{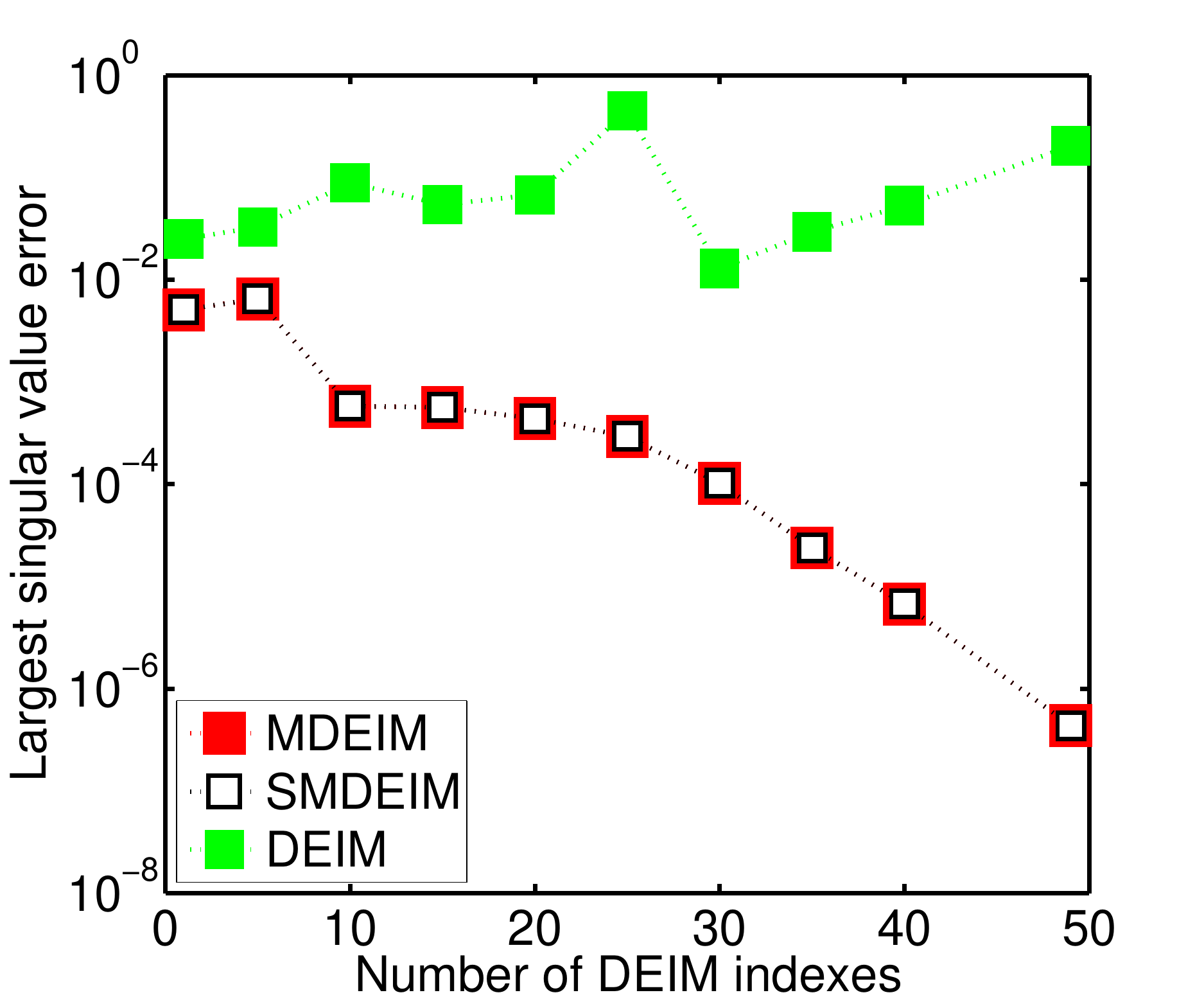}}
\caption{Full Jacobians errors at initial time - Frobenius norm - Largest SVD. 
\label{Fig::Full_Jacobian_error}}
\end{figure}

\subsubsection{Performance of implicit reduced order models}

In general, the implicit discrete problems obtained from discretization of nonlinear partial differential equations and ordinary differential
equations are solved by employing some sort of a Newton based technique. It requires some residuals computations and their space derivatives evaluations. 
This is also the case for reduced implicit discrete problems. While for reduced residual calculations we will apply the tensorial POD approach,
for reduced Jacobians computations we will make use of six different techniques described in subsection \ref{Sub::Red_Jacobian_comp} including the newly introduced
SMDEIM method. For simplicity we decide to employ a reduced order expansion \eqref{eq::POD_expansion} that does not account for the mean.

We will compare the computational off-line/on-line costs as well as the accuracy of the proposed methods. As measures we propose the Frobenius norm of the 
errors between the reduced Jacobian and the solutions of the reduced order models. Details about the number of Newton iterations are presented for each method.

We derive the reduced order 1D Burgers model by employing a Galerkin projection. The constructed POD basis functions are the singular vectors
of the state variable and nonlinear term snapshots matrices obtained from the numerical solution of the full - order implicit Euler 1D Burgers discrete model. 
Figure \ref{Fig::Eigenvalues_1D_Burgers+MDEIM_vs_SMDEIM} shows the decay around the singular values of the solution $u$ and the advection term 
$u\frac{\partial u}{\partial x}$ for $401$ snapshots equally distributed in the interval $[0,2]$ and $201$ number of space points. This configuration is used for 
the majority of the experiments in this subsection.

\begin{figure}[h]
  \centering
   \includegraphics[scale=0.37]{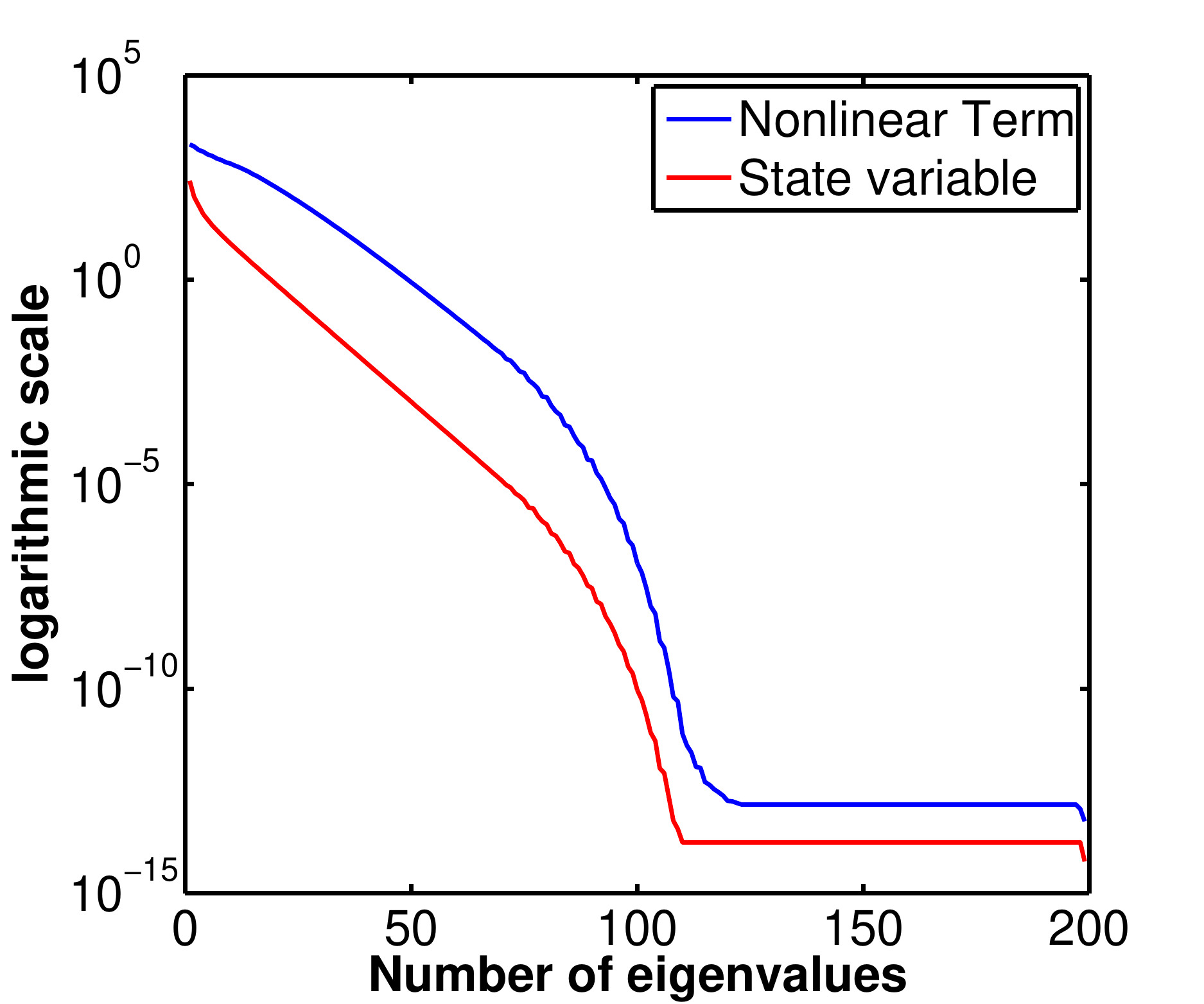}
  \caption{Spectrum properties of state variable and nonlinear term snapshots matrices. 
\label{Fig::Eigenvalues_1D_Burgers+MDEIM_vs_SMDEIM}}
\end{figure}

\paragraph{Off-line computational performances} We begin our comparison study by focusing on the off-line CPU costs of the reduced Jacobian methods. Only DEIM based techniques and tensorial method have off-line 
stages thus only their performances will be discussed here. Various POD basis dimensions $k$, number of interpolation indexes $m$, space points $n$ 
and time steps $N_t$ are considered. 

First the SVD factorizations of the MDEIM, SMDEIM ad DEIM snapshots matrices are derived. Next the DEIM algorithm \ref{alg::DEIM} computes the interpolation
indexes for each set of singular vectors. Then matrices $U^TV(P^TV)^{-1}$, ${C} \cdot V_m\, \left(P_m^TV_m\right)^{-1}$ and 
${\tilde C} \cdot V_{m_{nz}}\Big(P_{m_{nz}}^TV_{m_{nz}}\Big)^{-1}$ in  \eqref{eq::Jacobian_DEIM_approx}, \eqref{eq::MDEIM}  and \eqref{eq::sparseMDEIM}
are assembled along with tensor G \eqref{eq::tensor_g} and the total computational costs of the above off-line operations are counted separately for each
method and depicted in Figure \ref{Fig::Offline_CPU_comparisons}. In panel (a), we set the number of selected interpolation indexes to $30$ and notice that 
for a reduced basis with $50$ modes the SMDEIM is $45.7$ times faster than MDEIM. The DEIM method has the smallest off-line computational cost while tensorial
method CPU time tends to increase exponentially with the growth of POD basis dimension. For the numerical experiment depicted in Figure \ref{Fig::Offline_CPU_comparisons}(b)
we choose $25$ POD basis functions. The off-line CPU times of the greedy based techniques are slightly increased with the growth of the number of the interpolation
indexes. For $m=50$, we remark that SMDEIM is $54.2$ times more rapid than MDEIM. Once we increase the number of mesh points we observe in
Figure \ref{Fig::Offline_CPU_comparisons}(c) that the sparse 
version becomes much faster. For $25$ POD basis functions and $30$ interpolation indexes the sparse
MDEIM is approximately $200$ times faster than the MDEIM method for $501$ space points and $1001$ time steps.

\begin{figure}[h]
  \centering
  \subfigure[$m~=~30$;] {\includegraphics[scale=0.24]{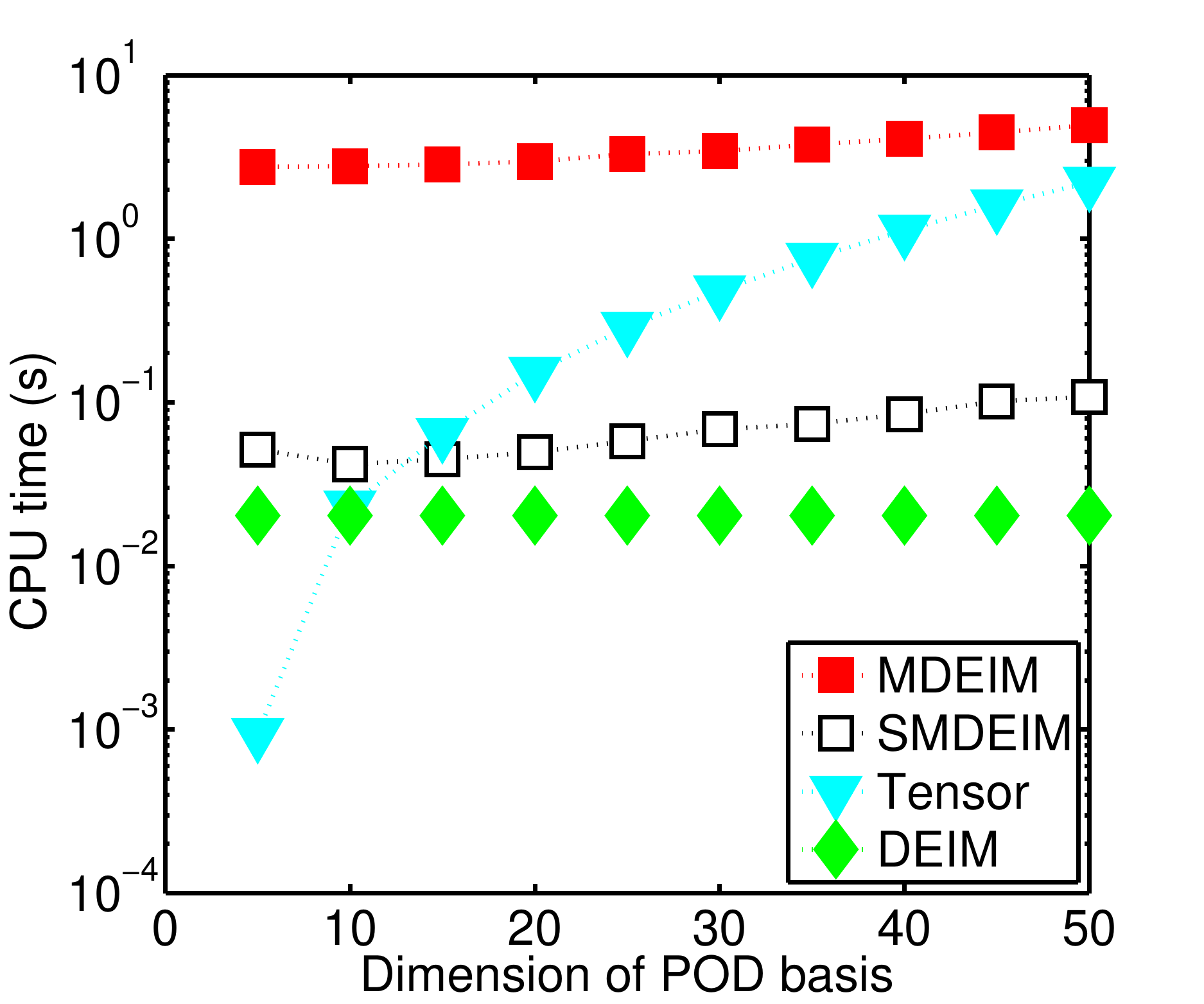}}
  \subfigure[$k~=~25$;]{\includegraphics[scale=0.24]{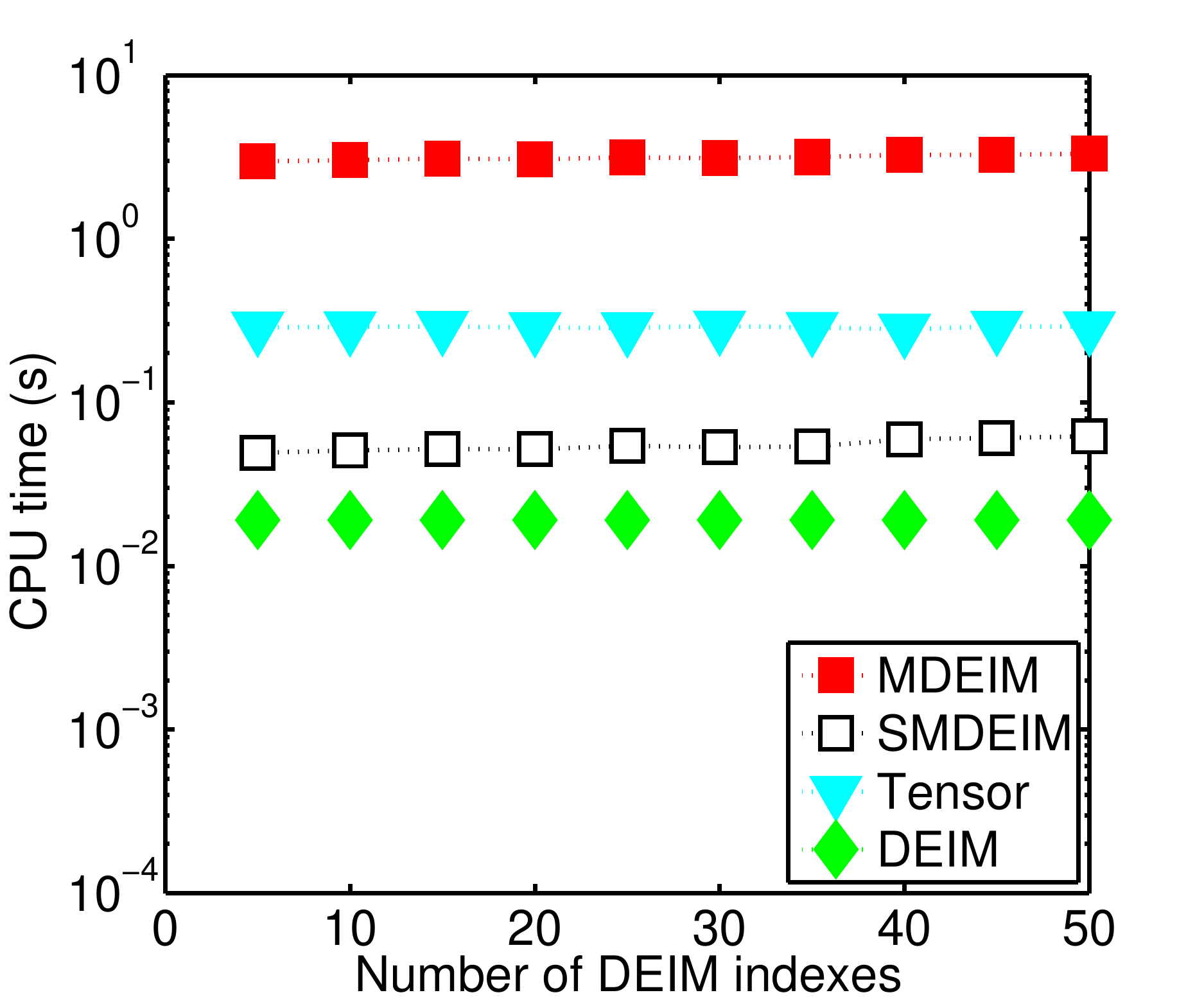}}
  \subfigure[$m~=~30$,~$k~=~25$;]{\includegraphics[scale=0.24]{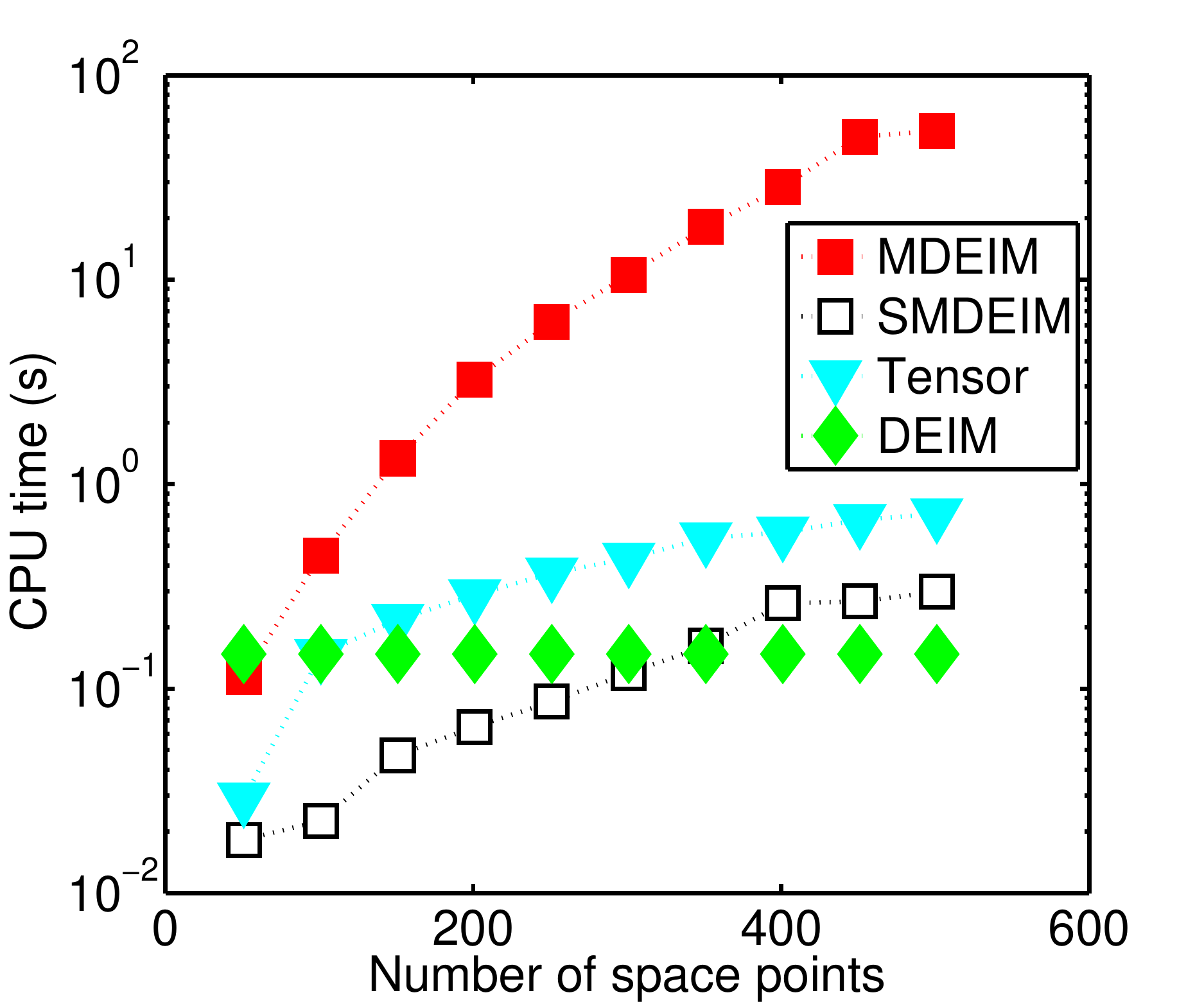}}
\caption{Off-line computational time performances of MDEIM and SMDEIM 
\label{Fig::Offline_CPU_comparisons}}
\end{figure}

\paragraph{Reduced Jacobian errors} The accuracy of the reduced Jacobian approximations is measured in the reduced space against the standard $U^T{\bf J}_{\bf r^i}(\x_{t_i})U$,
where the residual full Jacobian is computed from the high fidelity model solutions. Figure \ref{Fig::Red_Jac_errors} describes the reduced derivatives
errors at the initial time step calculated using Frobenius norm for various number of POD basis functions and DEIM indexes. 

For the experiment designed in Figure \ref{Fig::Red_Jac_errors}(a), the number of interpolation indexes is set to $30$ while the POD basis dimension is increased.
We notice that the DEIM based approximations quality is highly dependent on the size of the reduced manifold. However SMDEIM and MDEIM are less affected since their
proposed Jacobians are $2$ orders of magnitude more accurate than DEIM approximation. Figure \ref{Fig::Red_Jac_errors}(b) illustrates the impact of an increased
number of interpolation indexes onto the quality of Jacobian approximations while the dimension of the POD basis is maintained steady with $k=20$. SMDEIM and 
MDEIM requires $25$ interpolation indexes to reach the same precision as the tensorial, directional derivatives and direct projection methods while DEIM needs more 
than $50$ points for the same accuracy level.

\begin{figure}[h]
  \centering
  \subfigure[$m~=~30$] {\includegraphics[scale=0.35]{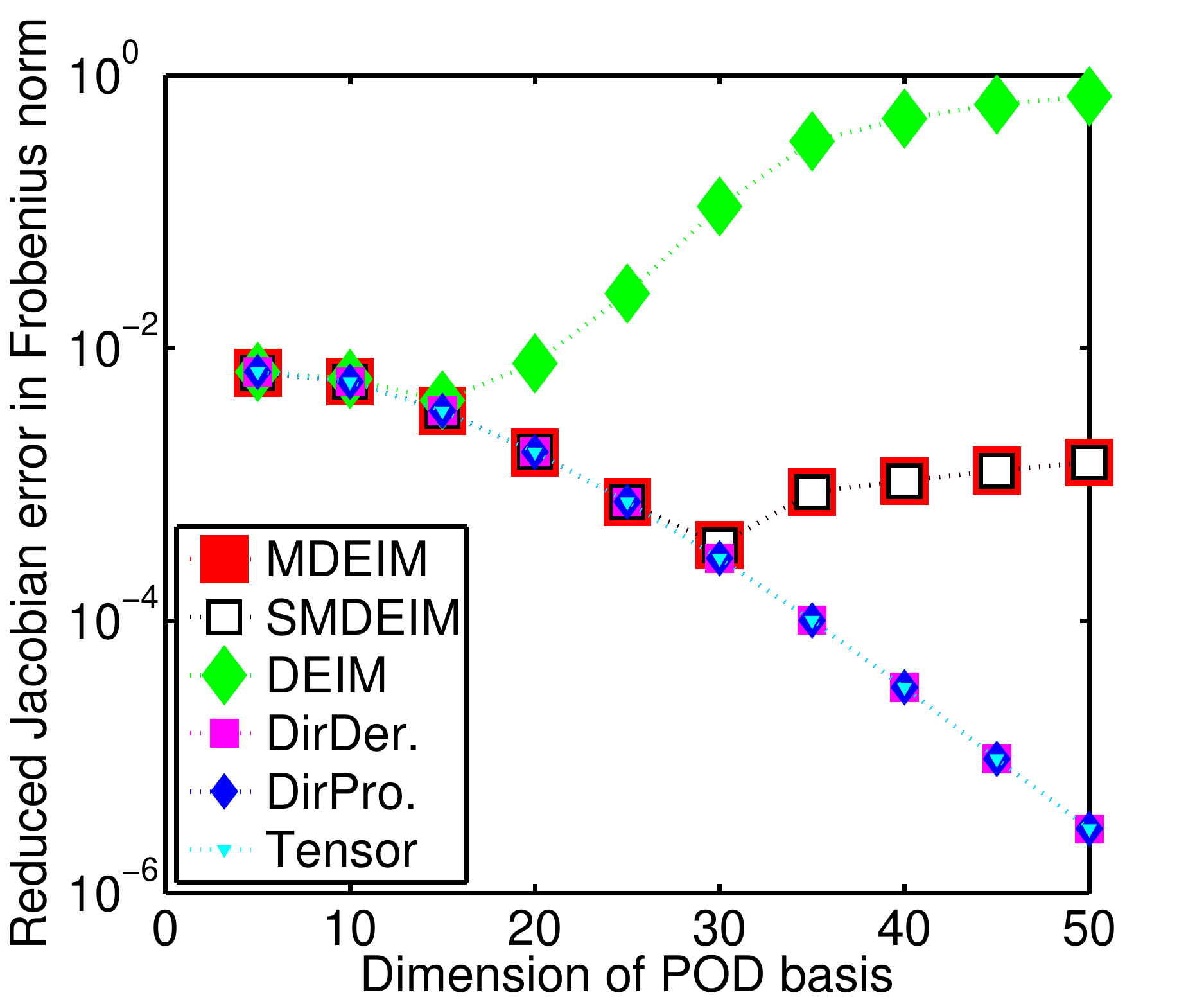}}
  \subfigure[$k~=~20$ ]{\includegraphics[scale=0.35]{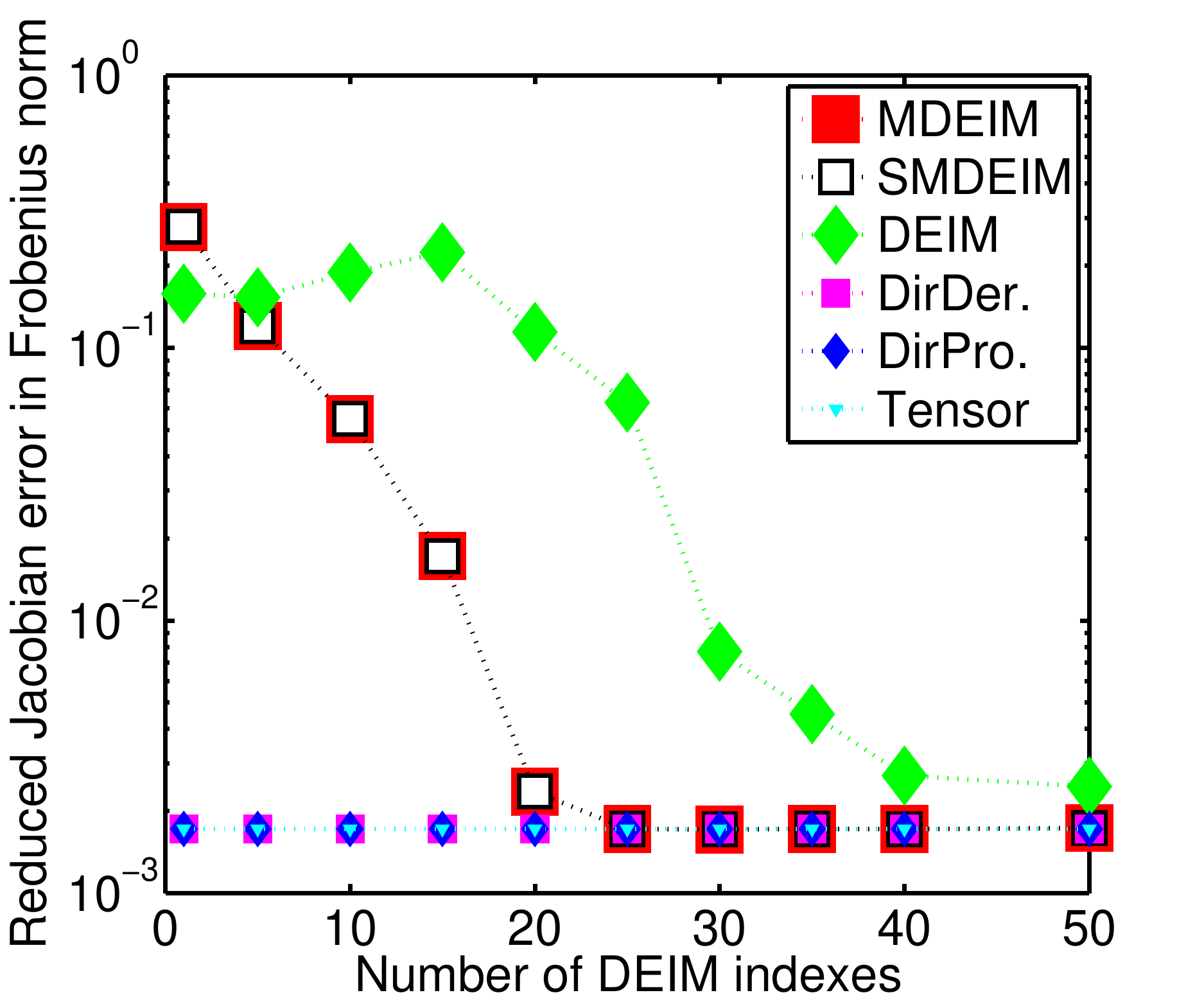}}
\caption{Reduced Jacobians errors
\label{Fig::Red_Jac_errors}}
\end{figure}

\paragraph{On-line computational performances} Here we compare the on-line characteristics of the proposed implicit reduced order models. Two features are of interest,
i.e. CPU time and solution accuracy. Figure \ref{Fig::Red_online_CPU_time1}(a) shows the amount of integration time required by the 
reduced order models to obtain their solutions with respect to the dimension of POD basis $U$. We set the number of DEIM interpolation indexes to $30$. 
SMDEIM and MDEIM perform similarly thus confirming the teoretical on-line computational complexities derived in subsection 
\ref{Sub::Red_Jacobian_comp}. For $50$ POD basis modes the MDEIM and SMDEIM are $2\times$, $3.75\times$, $4\times$, $7.43\times$ $16\times$ and $16.2\times$ faster 
than directional derivative, direct projection, DEIM, high-fidelity and tensorial models, respectively. The efficiency loss 
in the case of tensorial method is in accordance with the results in table \ref{table_complexityII}. The DEIM reduced order model performance is affected by the 
quality of the Jacobian approximation as we can notice in Figure \ref{Fig::Red_Jac_errors}(a) which doubles the averaged number of Newton-Raphson iterations per each time
step (see table \ref{table_NR_dim_POD}). While directional derivative and direct projection reduced order models use Matlab implementations based on vector operations,
the other models don't due to their core algorithms nature. It is well known that Matlab is tuned to enhance efficient vector operations calculations thus the former
models are advantaged. Similar Frobenius norms measuring the discrepancies between the projected reduced and high-fidelity solutions are obtained by all reduced 
order models (not shown due to data redundancy), however DEIM requires more Newton iterations as seen in table \ref{table_NR_dim_POD}.

\begingroup
\begin{table}  
\begin{center} 
\begin{tabular}[h]{|c|c|c|c|c|c|c|c|c|c|c|} \hline 
 POD basis dimension & 5 & 10 & 15 & 20 & 25 & 30 & 35 & 40 & 45 & 50 \cr \hline \hline 
MDEIM/SMDEIM & 3.98 & 4.11 & 4.37 & 4.50 & 4.61 & 4.66 & 4.71 & 4.75 & 4.80 & 4.82 \cr \hline 
DEIM & 4.00 & 4.11 & 4.45 & 4.95 & 5.73 & 7.03 & 10.05 & 10.89 & 11.39 & 11.59 \cr \hline 
DirDer/DirPro/Tensor & 3.85 & 4.09 & 4.37 & 4.50 & 4.61 & 4.66 & 4.70 & 4.78 & 4.81 & 4.83 \cr \hline 
Full & 4.83 & 4.83 & 4.83 & 4.83 & 4.83 & 4.83 & 4.83 & 4.83 & 4.83 & 4.83 \cr \hline 
\end{tabular} 
\end{center} 
\caption{The mean variation of Newton-Raphson iterates per time step along the change in the POD basis dimension}\label{table_NR_dim_POD} 
\end{table}  
\endgroup%

For the next experiment we measure the effect of an increased number of space points and the results are depicted in Figure \ref{Fig::Red_online_CPU_time1}(b). 
The dimension of POD basis is set to $25$ and $30$ DEIM interpolation indexes are selected. While only directional derivative and direct projection 
have on-line computational complexities depending on the dimension of the full space, all of the proposed reduced order models suffer as the number 
of meshed points becomes larger. As the space dimension increases the number of time discretization points is raised too. For example, for
$501$ mesh points we use $1001$ time steps and MDEIM and SMDEIM are $2.4\times$, $2.55 \times$, $4.2 \times$, $11\times$ and $102\times$ times faster than 
the directional derivative, DEIM, tensorial, direct projection, and full models. The quality of the Jacobian approximations is reflected in the number of Newton-Raphson
iterations, and again DEIM requires more loops (see table \ref{table_NR_space_points}) to achieve the same level of accuracy as the other reduced order models.

\begingroup
\begin{table}  
\begin{center} 
\begin{tabular}[h]{|c|c|c|c|c|c|c|c|c|c|c|} \hline 
 Number of space points & 51 & 101 & 151 & 201 & 251 & 301 & 351 & 401 & 451 & 501 \cr \hline \hline 
MDEIM/SMDEIM & 6.14 & 5.38 & 4.95 & 4.61 & 4.44 & 4.33 & 4.23 & 4.12 & 3.97 & 3.92 \cr \hline 
DEIM & 6.22 & 6.35 & 5.82 & 5.73 & 5.74 & 5.41 & 5.84 & 5.54 & 5.09 & 5.62 \cr \hline 
DirDer/DirPro/Tensor & 6.14 & 5.38 & 4.95 & 4.61 & 4.43 & 4.33 & 4.23 & 4.12 & 3.95 & 3.92 \cr \hline 
Full & 4.14 & 4.14 & 4.14 & 4.14 & 4.14 & 4.14 & 4.14 & 4.14 & 4.14 & 4.14 \cr \hline 
\end{tabular} 
\end{center} 
\caption{The mean variation of Newton-Raphson iterates per time step along the change in the number of space points}\label{table_NR_space_points} 
\end{table}  
\endgroup%

\begin{figure}[h]
  \centering
  \subfigure[$m~=~30$] {\includegraphics[scale=0.24]{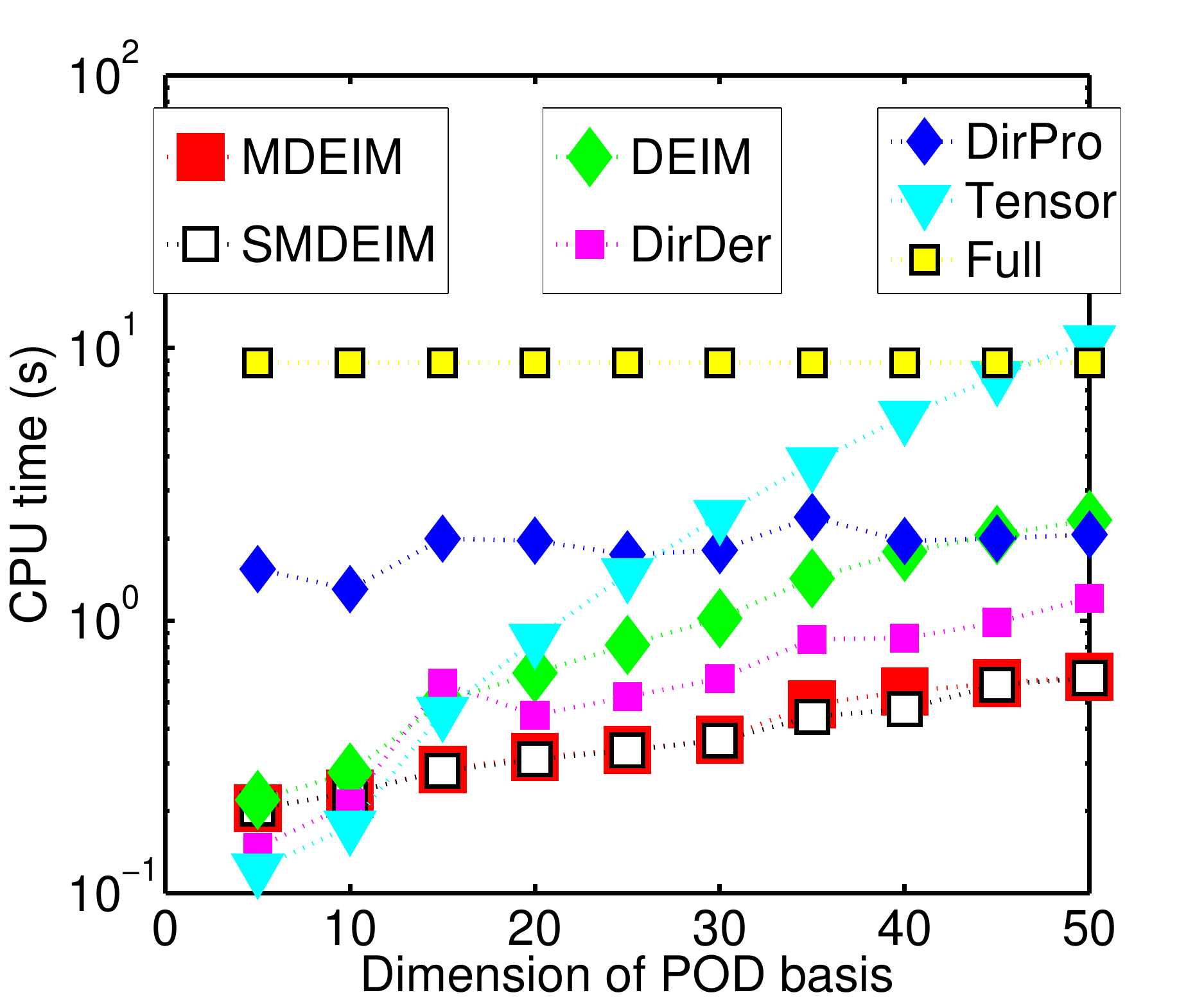}}
  \subfigure[$m~=~30,~k~=~25$]{\includegraphics[scale=0.24]{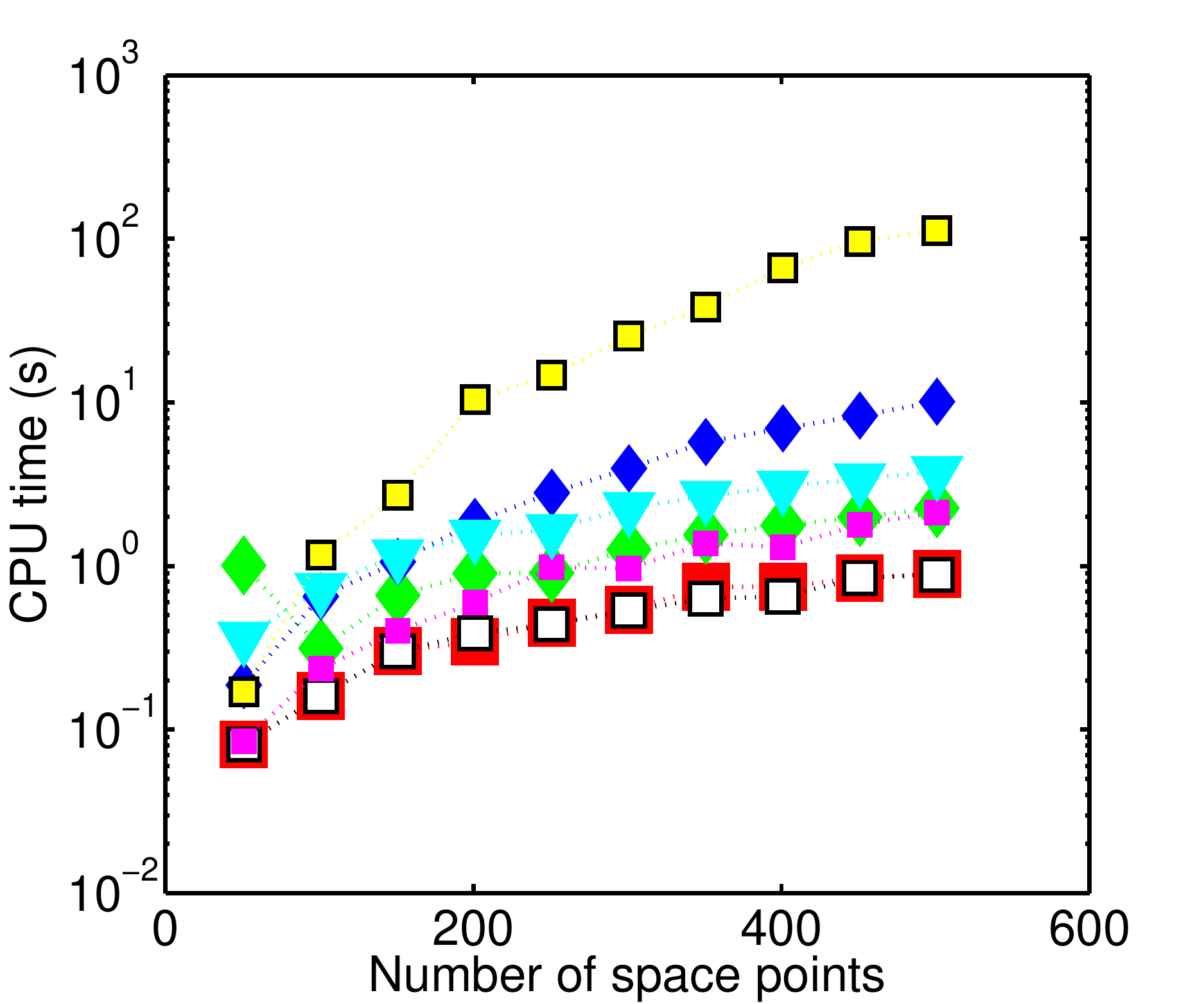}}
  \subfigure[$k~=~25$]{\includegraphics[scale=0.24]{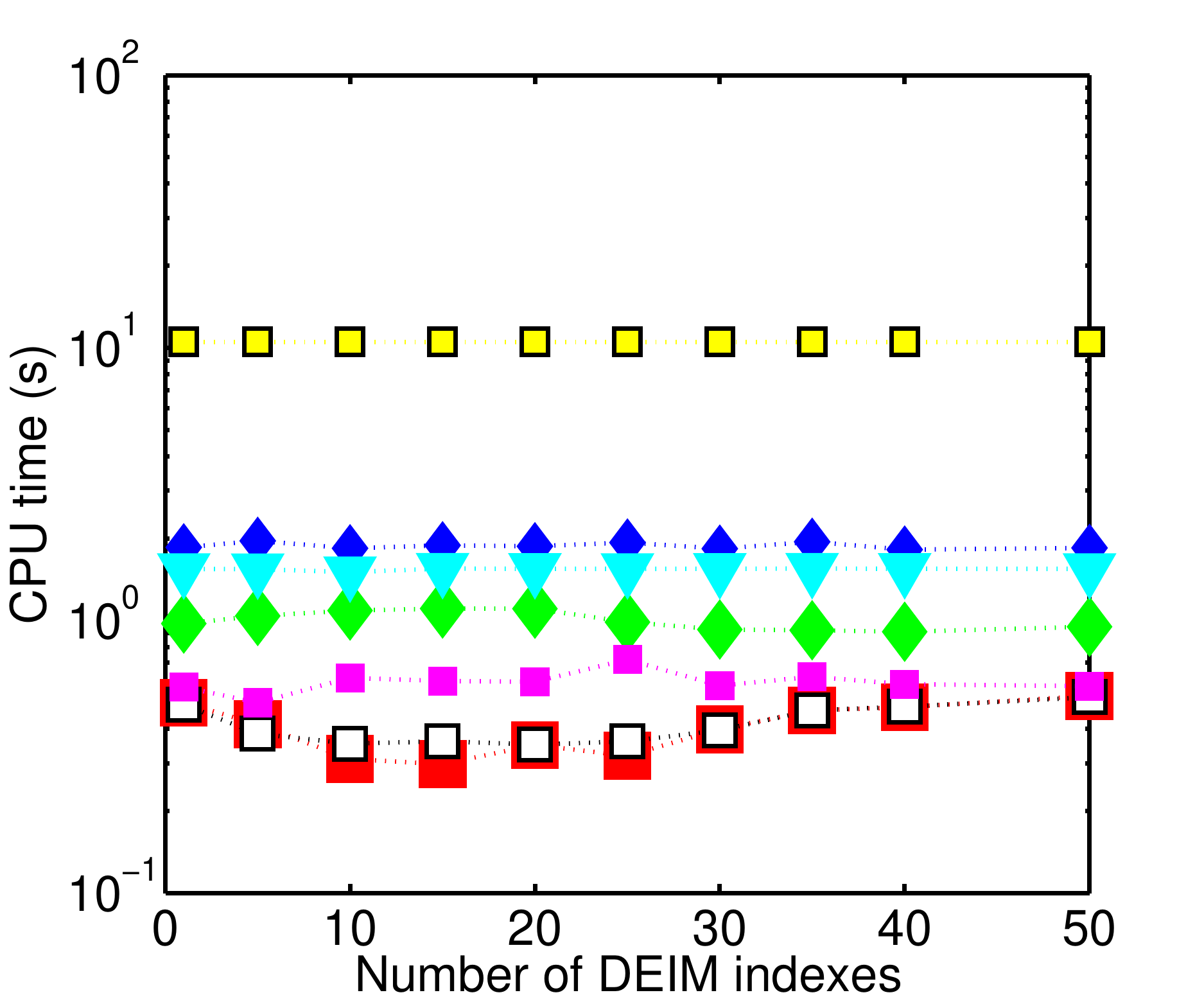}}
\caption{On-line computational time performances of reduced order models
\label{Fig::Red_online_CPU_time1}}
\end{figure}

By increasing the number of interpolation indexes to $50$ the CPU times for MDEIM and SMDEIM reduced order models are slightly increased as seen in 
Figure \ref{Fig::Red_online_CPU_time1}(c). The number of Newton iterations for matrix DEIM techniques becomes similar with those
of exact Jacobian techniques for $m$ larger than $20$ indexes, while in the case of DEIM more than $40$ DEIM indexes are required (\ref{table_NR_DEIM_indexes}). 
This is also noticed in Figure \ref{Fig::Red_online_CPU_time1}(c) where the DEIM reduced order model CPU time is decreasing even if the number of interpolation
indexes is raised.

\begingroup
\begin{table}  
\begin{center} 
\begin{tabular}[h]{|c|c|c|c|c|c|c|c|c|c|c|} \hline 
 Number of DEIM indexes & 1 & 5 & 10 & 15 & 20 & 25 & 30 & 35 & 40 & 50 \cr \hline \hline 
MDEIM/SMDEIM & 8.58 & 6.88 & 5.64 & 4.98 & 4.62 & 4.61 & 4.61 & 4.61 & 4.61 & 4.61 \cr \hline 
DEIM & 11.38 & 11.41 & 11.48 & 11.12 & 10.29 & 7.62 & 5.73 & 4.95 & 4.65 & 4.61 \cr \hline 
DirDer/DirPro/Tensor & 4.61 & 4.61 & 4.61 & 4.61 & 4.61 & 4.61 & 4.61 & 4.61 & 4.61 & 4.61 \cr \hline 
Full & 4.83 & 4.83 & 4.83 & 4.83 & 4.83 & 4.83 & 4.83 & 4.83 & 4.83 & 4.83 \cr \hline 
\end{tabular} 
\end{center} 
\caption{The mean variation of Newton-Raphson iterates per time step along the change in the number of DEIM indexes}\label{table_NR_DEIM_indexes} 
\end{table}  
\endgroup%

\subsection{Two-dimensional Shallow Water Equations}\label{subsec:SWE}

\subsubsection{Numerical Scheme}

In meteorological and oceanographic problems, one is often not interested in small time steps because the discretization error in time is small compared to the discretization error in space. The alternating direction fully implicit (ADI) scheme \citep{Gus1971} considered in this paper is first order in both time and space and it is stable for large CFL condition numbers. It was also proved that the method is unconditionally stable for the linearized version of the SWE model. Other research work on this topic include efforts of \citet{FN1980} and \citet{NVG1986}.

We are solving the SWE model using the $\beta$-plane approximation on a rectangular domain \citep{Gus1971}
\begin{equation}\label{eqn:swe-pde}
\frac{\partial w}{\partial t}=A(w)\frac{\partial w}{\partial x}+B(w)\frac{\partial w}{\partial y}+C(y)w,
\quad (x,y) \in [0,L] \times [0,D], \quad t\in(0,t_{\rm f}],
\end{equation}
where $w=(u,v,\phi)^T$ is a vector function and $u,v$ are the velocity components in the $x$ and $y$ directions, respectively. Geopotential is computed using
$\phi = 2\sqrt{gh}$, $h$ being the depth of the fluid and $g$ the acceleration due to gravity.

The matrices $A$, $B$ and $C$ are
\[
A=-\left(\begin{array}{ccc}
           u&0&\phi/2\\
           0&u&0\\
           \phi/2&0&u \end{array}\right), \quad
B=-\left(\begin{array}{ccc}
           v&0&0\\
           0&v&\phi/2\\
           0&\phi/2&v \end{array}\right), \quad
C=\left(\begin{array}{rrr}
           0&f&0\\
           -f&0&0\\
           0&0&0 \end{array}\right),
\]
and $f$ is the Coriolis term
\[
f=\hat f + \beta(y-D/2),~\beta=\frac{\partial f}{\partial y}, \quad y\in[0,D],
\]
with $\hat f$ and $\beta$ constants.

We assume periodic solutions in the $x$ direction for all three state variables
while in the $y$ direction
$$v(x,0,t)=v(x,D,t)=0,~x\in[0,L],~t\in(0,t_{\rm f}]$$
and Neumann boundary condition are considered for $u$ and $\phi$.

We derive the initial conditions from the initial height condition No. 1 of \citet{Gram1969} i.e.
\begin{equation*}
\hspace{-10mm}h(x,y,0)=H_0+H_1+\tanh\biggl(9\frac{D/2-y}{2D}\biggr)+H_2\textrm{sech}^2\biggl(9\frac{D/2-y}{2D}\biggr)\sin\biggl(\frac{2\pi x }{L}\biggr).
\end{equation*}
and the initial velocity fields are calculated from the initial height field using the geostrophic relationship.

Now we introduce a mesh of $n = N_x\cdot N_y$ equidistant points on $[0,L]\times[0,D]$, with $\Delta x=L/(N_x-1),~\Delta y=D/(N_y-1)$. We also discretize 
the time interval $[0,t_{\rm f}]$ using $N_t$ equally distributed points and $\Delta t=t_{\rm f}/(N_t-1)$. Next we define vectors of unknown variables 
containing approximate solutions such as
$${\boldsymbol w}(t_N)\approx [w(x_i,y_j,t_N)]_{i=1,2,..,N_x-2,~j=1,2,..,N_y-2} \in \mathbb{R}^{3(N_x-2)\times (N_y-2)},~N=1,2,..N_t,~(\textrm{no boundaries included}) $$
The semi-discrete equations of SWE \eqref{eqn:swe-pde} contain six nonlinear functions $F_{11},~F_{12},$ $F_{21},~F_{22}$, $F_{31},~F_{32}$ as described in
\citep[Section 4]{Stefanescu_etal_forwardPOD_2014}. The ADI scheme splits the finite difference equations into two, taking implicitly the x derivatives terms first while
the y derivatives components are treated implicitly in the second stage. The discrete model was implemented in Fortran and uses a sparse matrix environment . For operations with sparse matrices we employed SPARSEKIT library \citep{Saad1994}
and the sparse linear systems obtained during the quasi-Newton iterations were solved using MGMRES library \citep{Barrett94,Kelley95,Saad2003}. The LU decomposition is performed at every time step.
All numerical experiments use the following constants: $L=6000\, km$, $D=4400\, km$, $t_{\rm f} = 3\, hours$, $\hat f=10^{-4}\, sec^{-1}$, $\beta=1.5\cdot10^{-11}\, sec^{-1}m^{-1}$, $g=10\, m\, sec^{-2}$, $H_0=2000\, m$, $H_1=220\, m$, $H_2=133\, m$.
The nonlinear algebraic systems are solved using a Newton-Raphson method and the allowed number of Newton iterations per each time step is set to 50. The solution is
considered accurate enough when the euclidian norm of the residual is less than $1e^{-10}$.

\subsubsection{Greedy based Jacobian approximation techniques}

Initially we discuss the spectrum characteristics of the SMDEIM and MDEIM snapshots matrices of the high-fidelity 2D SWE model and then we compare the accuracy of their 
output Jacobian approximations against the one proposed by DEIM where the building blocks are the nonlinear functions approximations. The numerical experiments in this subsection are obtained for a space mesh of $21\times15 $ points, with $\Delta x = 300$km and $\Delta y \approx 315$km. 
The integration time windows is set to $6$h and we use $91$ time steps ($N_t=91$) with $\Delta t = 240$s. 

The Jacobian approximations are constructed using $180$ snapshots (ADI scheme has an intermediary step, thus the $91$ time steps provide $180$ state variables, Jacobians 
and nonlinear terms snapshots) obtained from the numerical solution of the full - order ADI finite difference SWE model. For the MDEIM method 
the matrices of snapshots belong to $\mathbb{R}^{\big(9(N_x-2)^2(N_y-2)^2\big)\times 180}$ for
both directions. In the case of SMDEIM technique we extract the nonzero elements of the Jacobians and form matrices of snapshots of the sizes $32(N_y-2)+16(N_x-4)(N_y-2) \times 180$
 (x direction) and $22(N_x-2)+16(N_y-4)(N_x-2) \times 180$ (y direction). The DEIM based Jacobians of the ADI SWE finite difference model are constructed using 
 the Jacobians of the $6$ nonlinear functions and each of the corresponding matrices of snapshots has $(N_x-2)(N_y-2) \times 180$ dense elements.

Figure \ref{Fig::Eigenvalues_2D_SWE+MDEIM_vs_SMDEIM} illustrates the singular values of the MDEIM and SMDEIM snapshots matrices in the x direction. The spectra are 
similar however the SMDEIM method is approximately $144 \times$ times faster. Moreover, the number of nonzero entries of the MDEIM singular vectors 
is $4228$, where $276$ of these values are artificially created by the matrix factorization as a consequence of the induced round-off errors.   
 
\begin{figure}[h]
  \centering
   \includegraphics[scale=0.43]{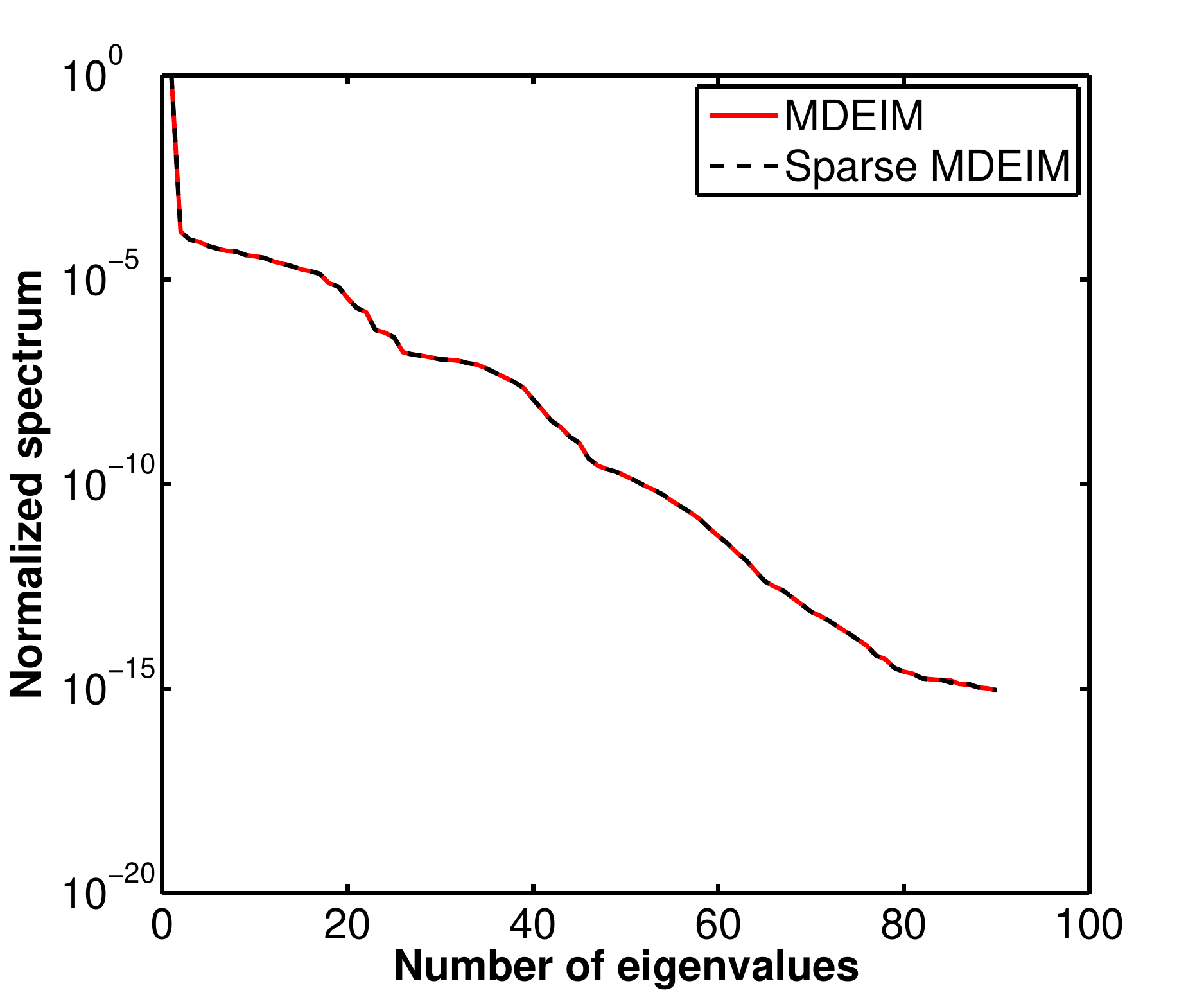}
\caption{Spectrum properties of MDEIM and SMDEIM snapshots matrices. 
\label{Fig::Eigenvalues_2D_SWE+MDEIM_vs_SMDEIM}}
\end{figure} 

Figure \ref{Fig::MDEIM_location_points_2D_SWE} depicts the location of the DEIM indexes obtained by the algorithm \ref{alg::DEIM} 
using MDEIM and SMDEIM singular vectors of the x-derivative implicit 2D SWE model. The first $20$ indices shown in panel (a) correspond to singular values
between $366$ and $1e-3$ and match perfectly. For extremely small singular values ranging from $1e-12$ to $1e-13$ the output DEIM indexes differ significantly
as noticed in panel (b). The MDEIM singular vectors propose also indexes outside of the Jacobian nonzeros bands not seen in the case of SMDEIM approach.
However, for most of the applications, selecting such large number of DEIM indexes does not necessary enhance the quality of the approximation obtained with a smaller 
number of indices. The computational cost of finding the first $20$ DEIM indexes using SMDEIM singular vectors is approximately $220 \times$ times faster than in the case of employing MDEIM singular vectors.

\begin{figure}[h]
  \centering
  \subfigure[2D-SWE - $1^{\textrm{th}}-20^{\textrm{th}}$ interpolation indexes]{\includegraphics[scale=0.37]{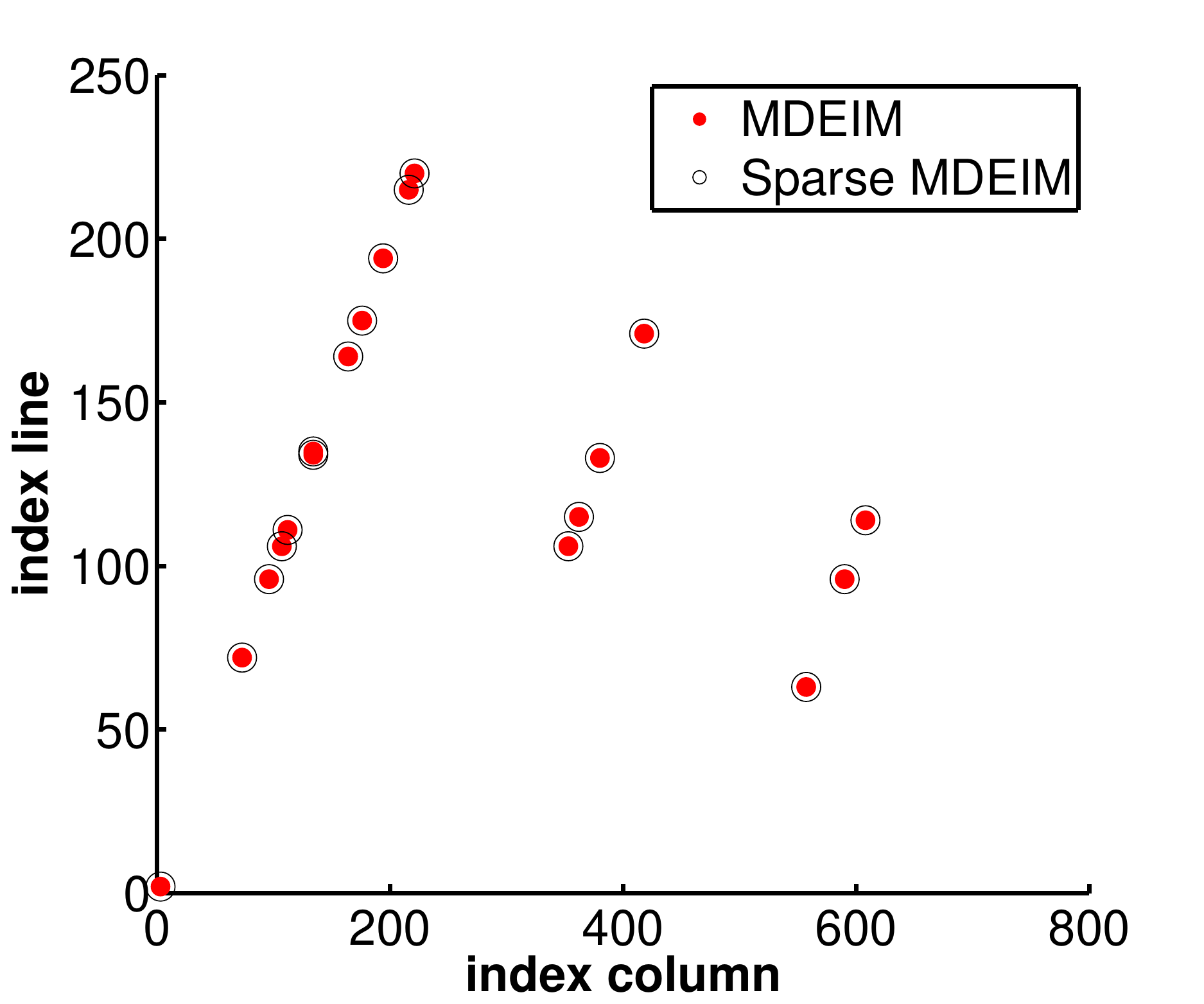}}
  \subfigure[2D-SWE - $80^{\textrm{th}}-100^{\textrm{th}}$ interpolation indexes]{\includegraphics[scale=0.37]{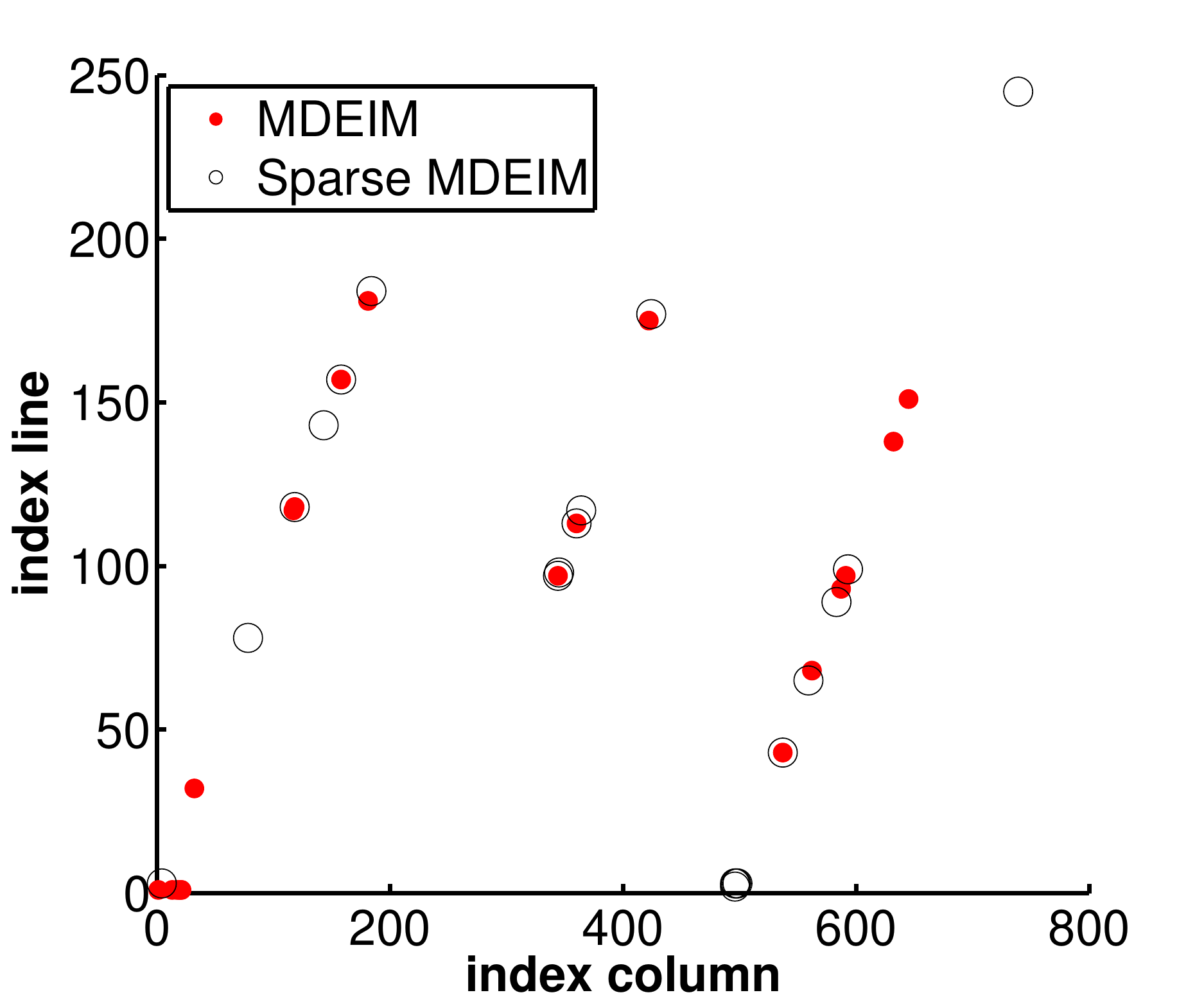}}
\caption{Localization of DEIM indexes using MDEIM and SMDEIM singular vectors 
\label{Fig::MDEIM_location_points_2D_SWE}}
\end{figure}

Next we assemble the approximations \eqref{eq::MDEIM}, \eqref{eq::sparseMDEIM} and \eqref{eq::Jacobian_DEIM_approx} of the x-derivative implicit 2D SWE model Jacobian
at the initial time step.  The Frobenius norm of the errors between the matrix approximations and its true representation is shown in Figure \ref{Fig::Full_Jacobian_error_2D_SWE} (a).  MDEIM and SMDEIM outputs present similar accuracy levels and the  mismatches Frobenius norms are inverse proportionally with the number of DEIM indexes. DEIM approximation is accurate only for the Jacobian rows selected by the interpolation indexes explaining the constant green trajectory even if the number of indexes is increased. The discrepancies between the largest singular value of the exact Jacobian and the greedy based approximations present  similar pattern as in the case of the Frobenius norms. This confirms that matrix DEIM approximations are more accurate than the DEIM proposed Jacobian.

\begin{figure}[h]
  \centering
  \subfigure[2D-SWE - $21\times 15$ space points ]{\includegraphics[scale=0.37]{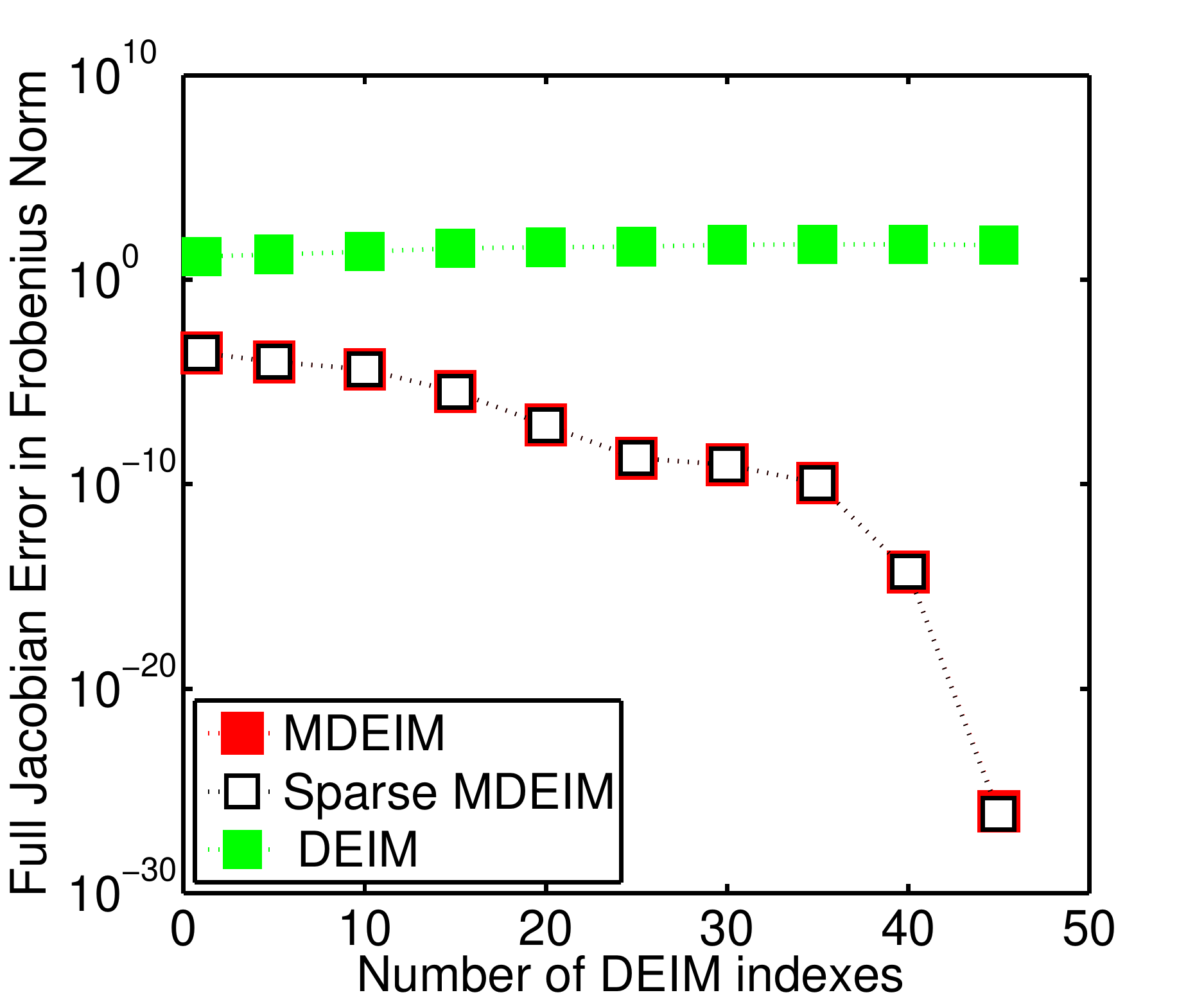}}
  \subfigure[2D-SWE - $21 \times 15$  space points ]{\includegraphics[scale=0.37]{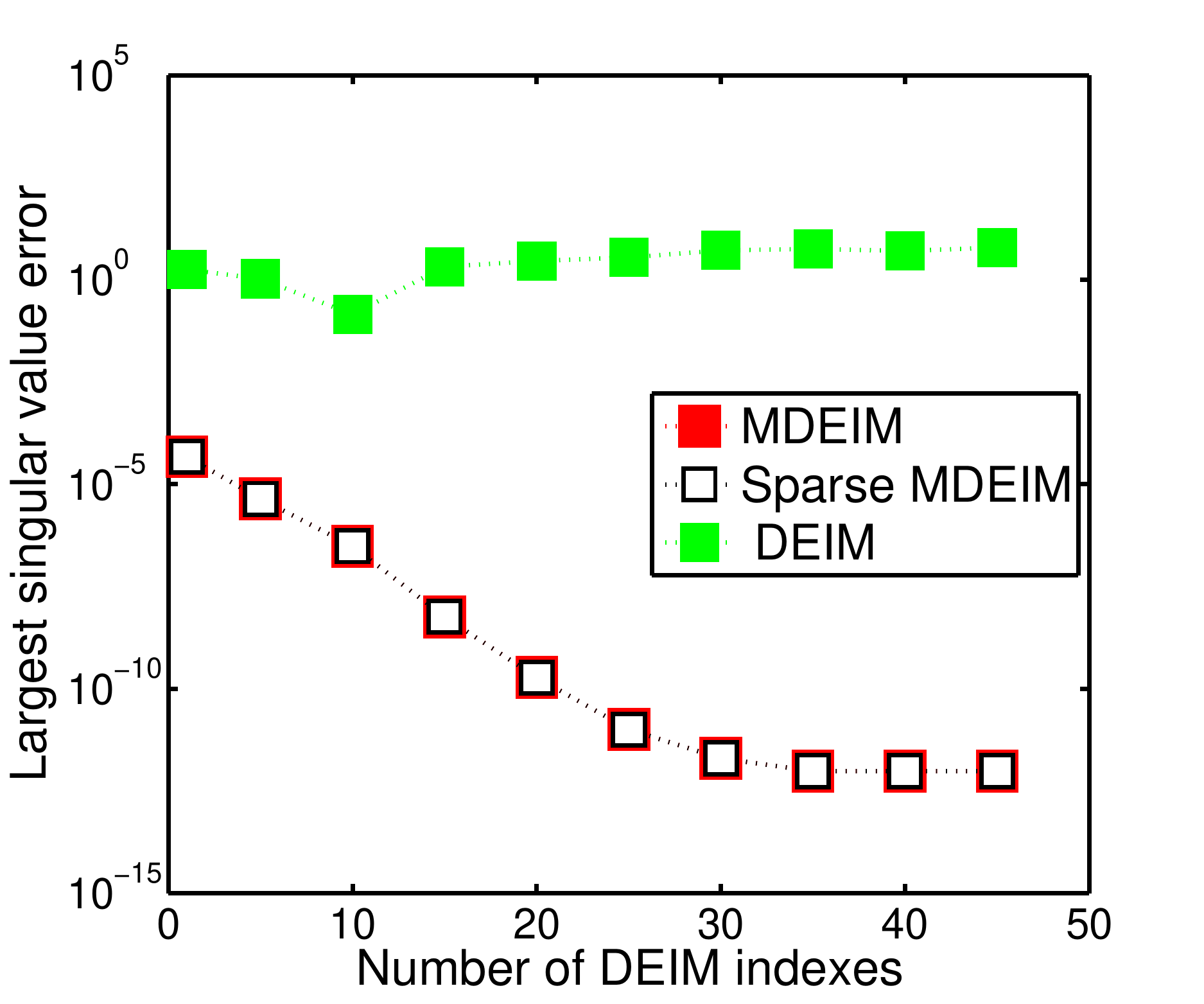}}
\caption{Full Jacobian errors at initial time - Frobenius norm - Largest SVD. 
\label{Fig::Full_Jacobian_error_2D_SWE}}
\end{figure}

\subsubsection{Performance of implicit reduced order models}

The proposed Jacobian approximations are embedded into POD reduced order framework using a Galerkin projection and the resulting reduced order models are compared in terms of computational cost and solution accuracy.  Most of the discussed results are obtained for a space resolution of $61\times 45$ points, with $\Delta x = \Delta y =100 km$. The models are integrated $6h$ in time and the number of time steps is set to $N_t=91$. The POD bases functions of the state variables, Jacobians matrices and nonlinear terms are constructed using 
$180$ snapshots obtained from the numerical solution of the full-order ADI finite difference SWE model. Figures \ref{Fig::Eigenvalues_state_2D_SWE}(a--b)  show the decay around the eigenvalues of the snapshot solutions for $u,~v,~\phi$ and the nonlinear snapshots $F_{11},~F_{12},$ $F_{21},~F_{22}$, $F_{31},~F_{32}$. The state variables spectra decrease faster than those of the nonlinear terms.

\begin{figure}[h]
  \centering
  \subfigure[Spectrum rate of decay] {\includegraphics[scale=0.4]{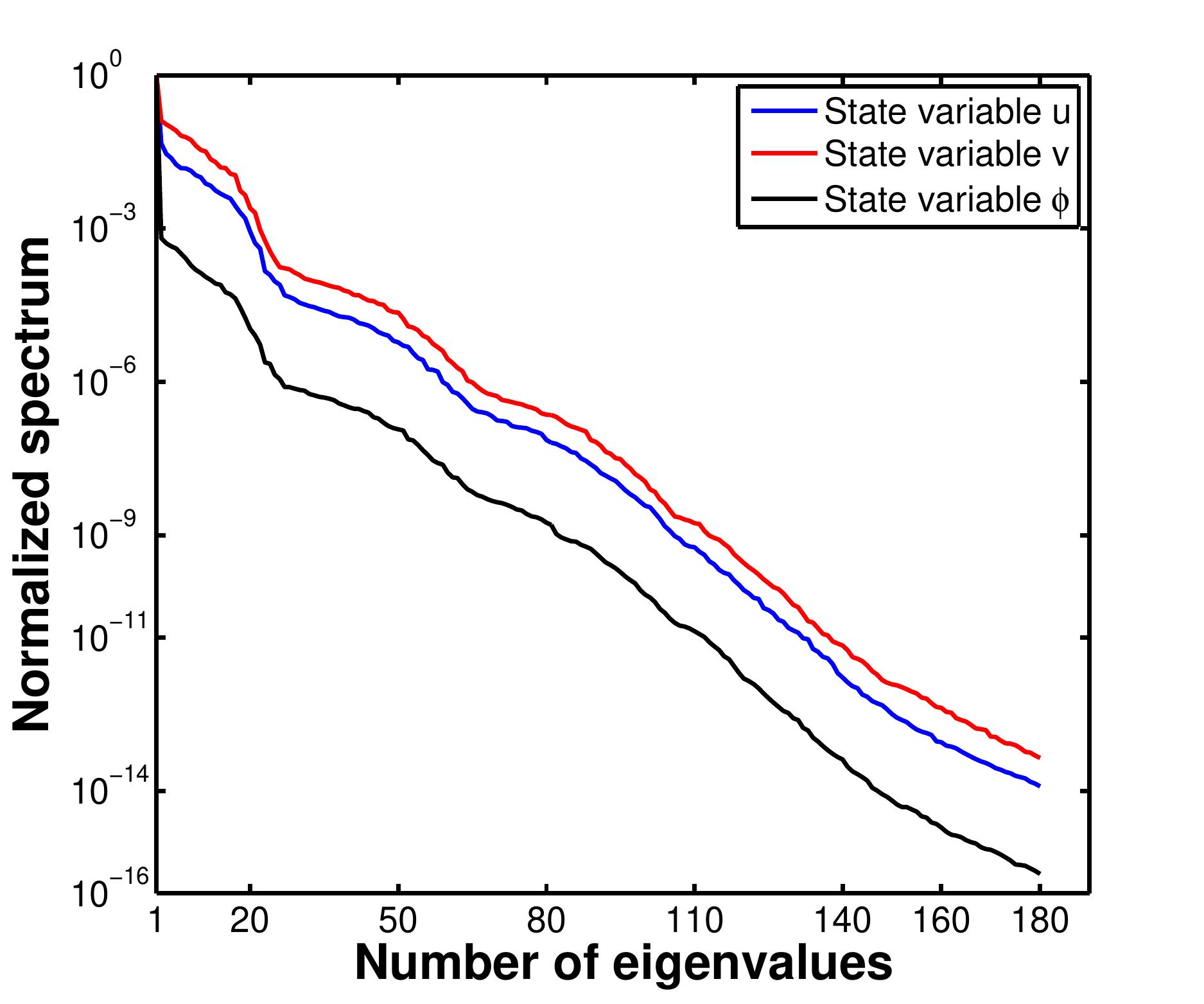}}
  \subfigure[Eigenvalues rate of decay]{\includegraphics[scale=0.4]{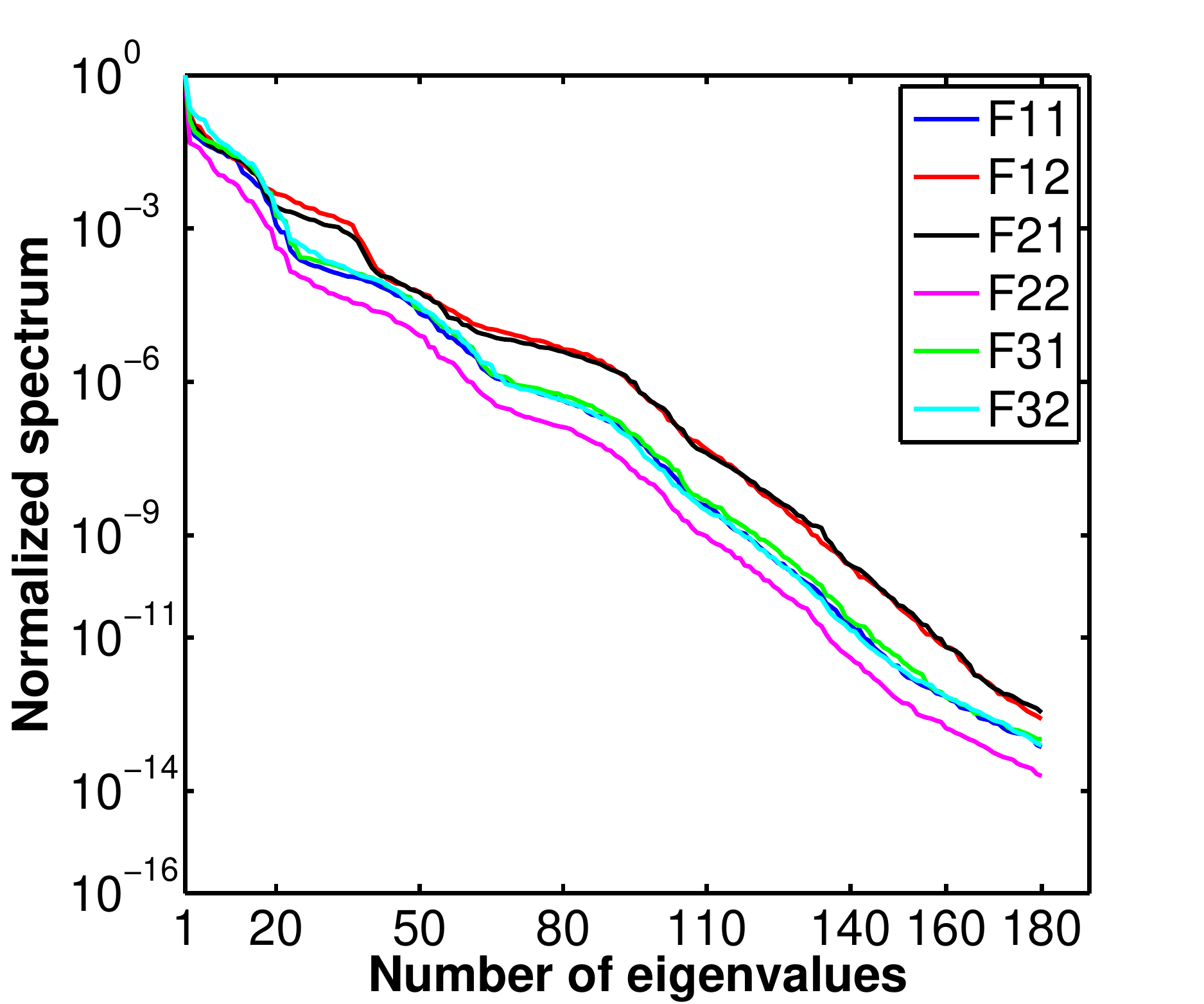}}
\caption{Spectrum properties of snapshots matrices
\label{Fig::Eigenvalues_state_2D_SWE}}
\end{figure}

\paragraph{Off-line computational performances}

During the off-line stages operations such as the singular value decomposition of the Jacobian snapshots matrices, calculation of interpolation indexes and computations
of matrices $U^TV(P^TV)^{-1}$, ${C} \cdot V_m\, \left(P_m^TV_m\right)^{-1}$ and 
${\tilde C} \cdot V_{m_{nz}}\Big(P_{m_{nz}}^TV_{m_{nz}}\Big)^{-1}$ in  \eqref{eq::Jacobian_DEIM_approx}, \eqref{eq::MDEIM}  and \eqref{eq::sparseMDEIM}
and tensor G \eqref{eq::tensor_g} are required only by $4$ of the discussed reduced order models described in subsection \ref{Sub::Red_Jacobian_comp}. These are the greedy based and tensorial surrogate models.  Figure \ref{Fig::Offline_CPU_comparisons_2DSWE} describes the total off-line computational time required for the on-line Jacobians evaluations as a function of the POD state dimension and number of interpolation indexes. In Figure \ref{Fig::Offline_CPU_comparisons_2DSWE}(a) we set the number of selected
interpolation indexes to $20$ and vary the dimensions of POD bases of state variables ${\bf u},~{\bf v}$ and ${\boldsymbol \phi} $ between $5$ and $50$. Tensorial approach has the smallest off-line cost however it becomes slower with the increase of the POD basis size. Among the greedy based techniques, SMDEIM computational effort is $400 \times$ times smaller than in the case of MDEIM method and only $9 \times$ times larger than the DEIM CPU time. It is worth mentioning the compromise proposed by SMDEIM method which manages to preserve the same level of accuracy as the MDEIM method at reasonable costs.  

Similar results are obtained if the number of DEIM interpolation indexes are varied as seen in Figure \ref{Fig::Offline_CPU_comparisons_2DSWE}(b). The number of POD bases functions is choose $20$ and the SMDEIM approach shows its efficiency being $1600$ times faster than the MDEIM method.  

\begin{figure}[h]
  \centering
  \subfigure[$m~=~20$;]{\includegraphics[scale=0.4]{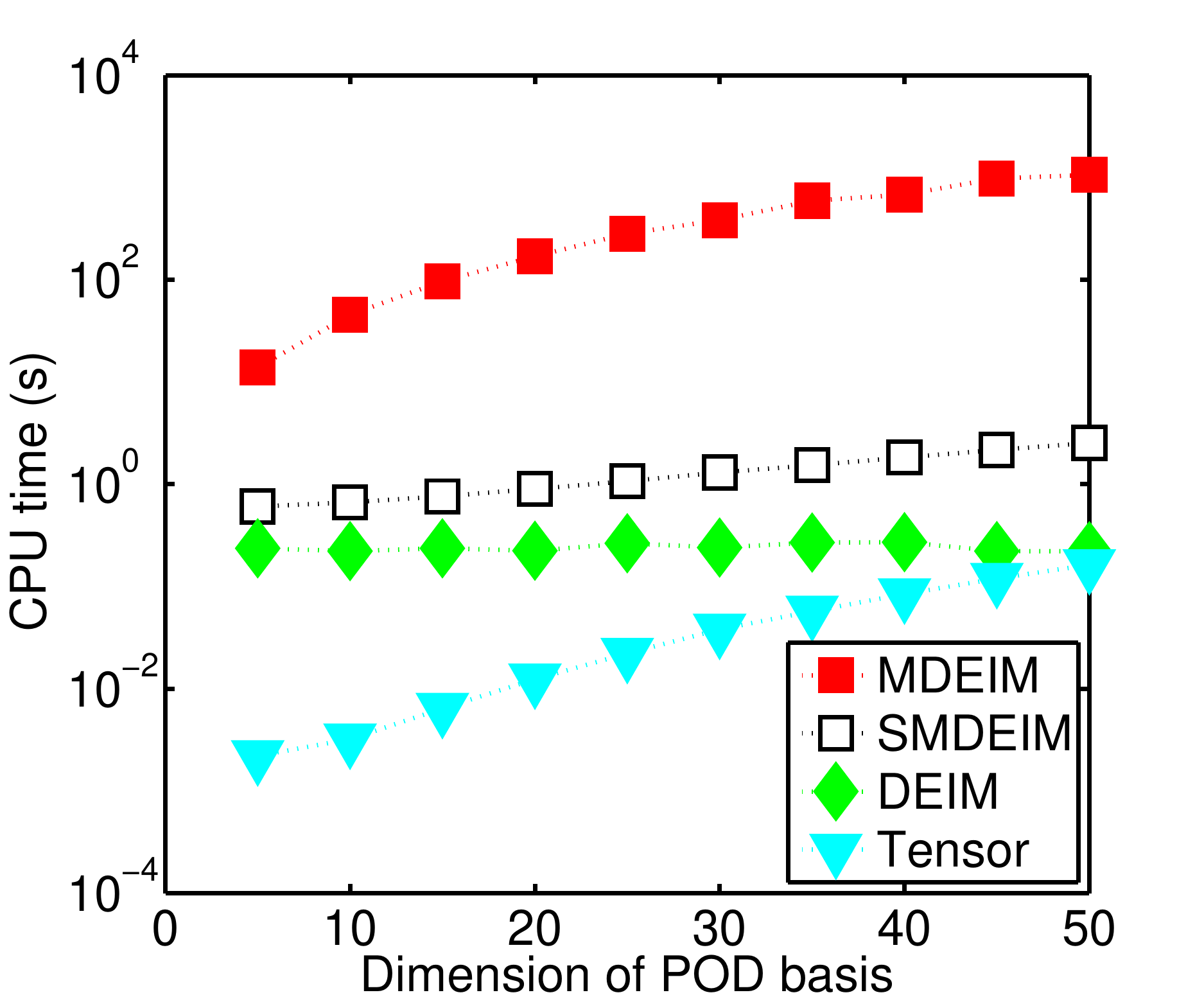}}
\subfigure[$k~=~20$;] {\includegraphics[scale=0.4]{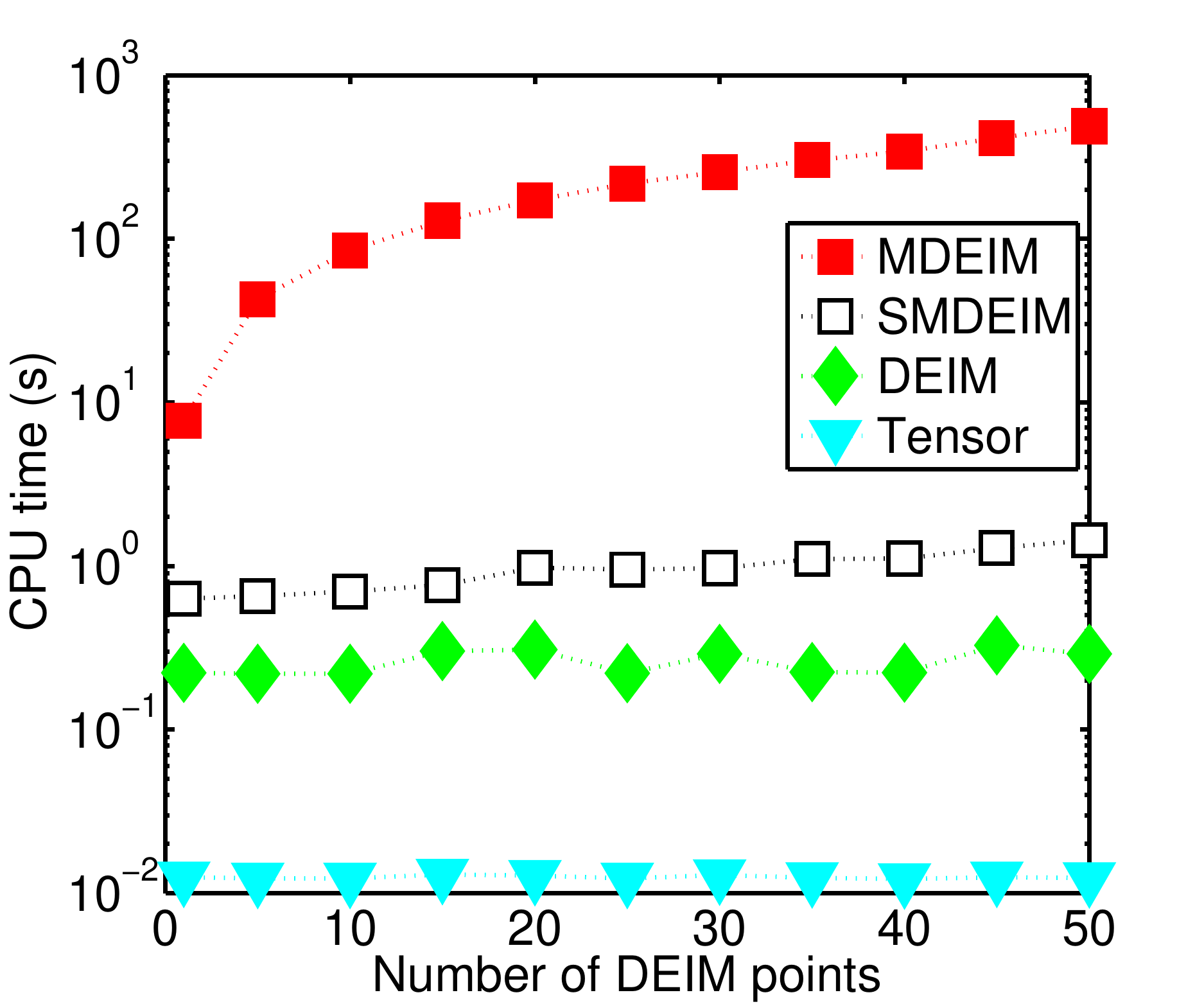}}
\caption{Off-line computational time performances of DEIM based and tensorial ROMs
\label{Fig::Offline_CPU_comparisons_2DSWE}}
\end{figure}

\paragraph{Reduced Jacobian errors}

The reduced residual Jacobian errors obtained using DEIM, MDEIM, SMDEIM,  directional derivative, direct projection and tensorial methods are
illustrated in Figure \ref{Fig::Red_Jac_errors_2D_SWE}, where Frobenius norm is employed. Initially we measure the reduced Jacobian errors with respect to the number of POD bases functions and
the results are presented in Figure \ref{Fig::Red_Jac_errors_2D_SWE}(a). The number of DEIM indexes is set to $30$. SMDEIM reduced
derivatives present similar levels of accuracy as the ones computed using the MDEIM method for $50$ POD basis functions. The direct projection and tensorial approaches present the most accurate reduced Jacobian. The directional derivative reduced derivatives
are calculated for $h=0.01$ in \eqref{eq::direct_deriv} and the precision is more than $3$ order of magnitude lower than in the case of SMDEIM, direct projection and tensorial methods. As expected the DEIM approximation is less accurate explained by the level of precision of its full Jacobian approximation depicted in Figure \ref{Fig::Full_Jacobian_error_2D_SWE}.

Figure \ref{Fig::Red_Jac_errors_2D_SWE}(b) describes the impact of the number of interpolation indexes onto the quality of reduced Jacobians. The dimension of
POD bases is set to $25$ and once the number of DEIM indexes is larger than $20$ the MDEIM and SMDEIM reduced Jacobians are almost as accurate as
the direct projection, directional derivative and tensorial Jacobians. In Figure \ref{Fig::Red_Jac_errors_2D_SWE}(b) by increasing the number of DEIM points to $50$ the quality of DEIM reduced Jacobian approximation is enhanced by one order of magnitude. 

\begin{figure}[h]
  \centering
  \subfigure[$m~=~30$;] {\includegraphics[scale=0.4]{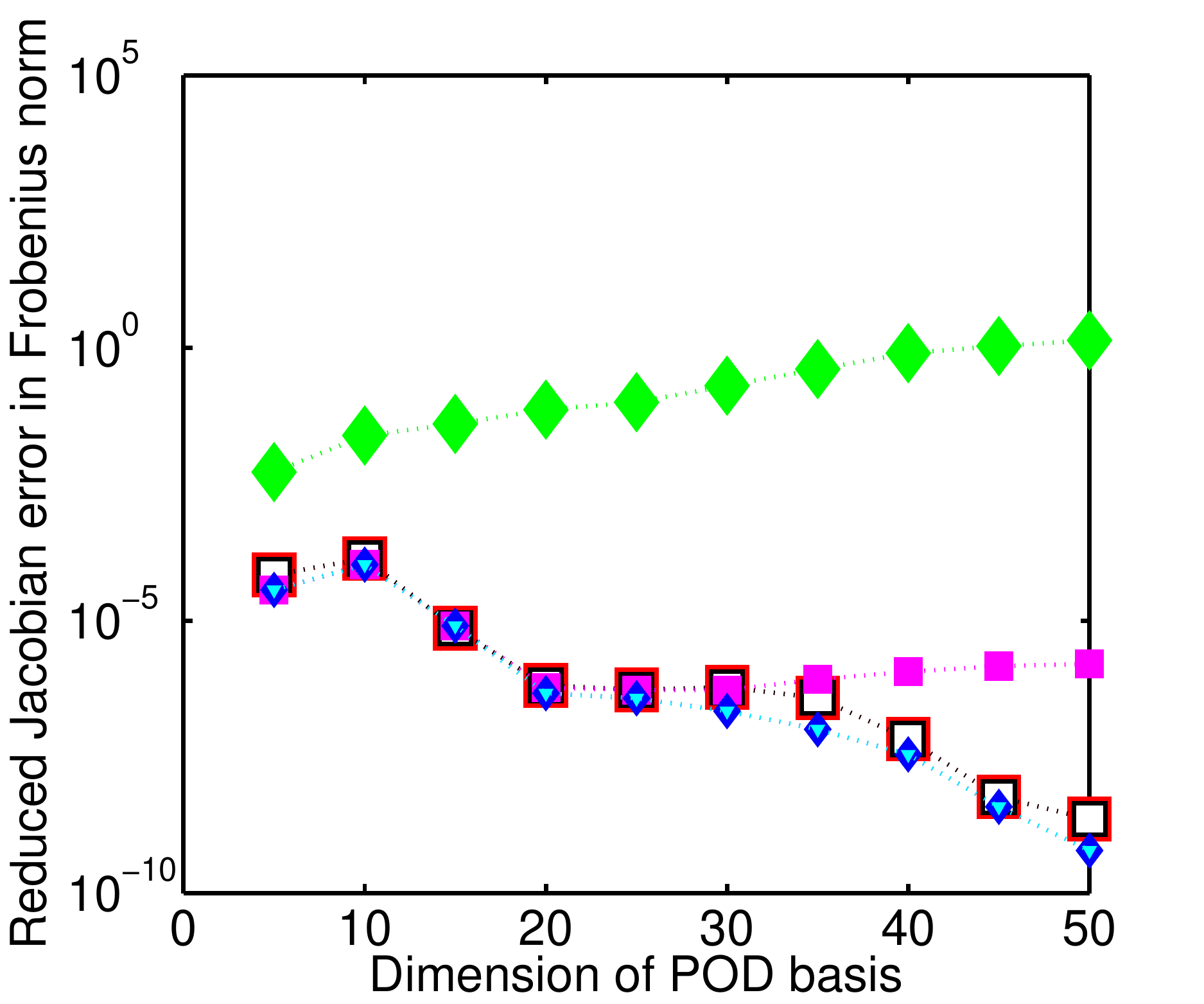}}
  \subfigure[$k~=~25$;]{\includegraphics[scale=0.4]{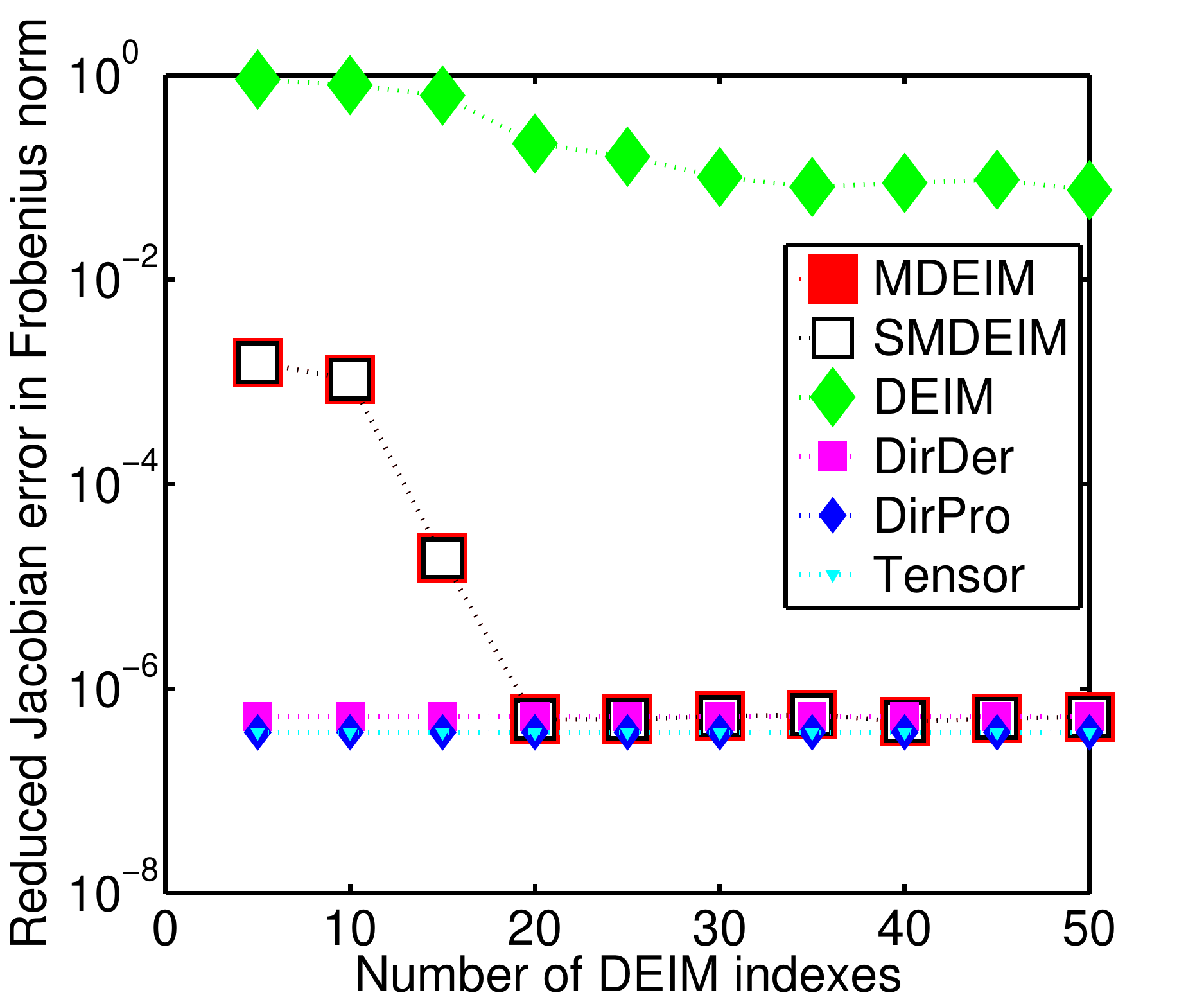}}
\caption{Reduced Jacobians errors
\label{Fig::Red_Jac_errors_2D_SWE}}
\end{figure}

\paragraph{On-line computational performances}

At this space resolution, MDEIM already requires storing snapshots vectors of size $57e+6$ and the cost of assemblying the Jacobian approximation fir $50$ DEIM points
is of order of hundred of seconds (see Figure \ref{Fig::Offline_CPU_comparisons_2DSWE}(a)). For $101 \times 71$ space points the computational complexity for building
the current version of MDEIM reduced order model becomes prohibitive. Since the planned experiments for this subsection include testing the performances of reduced 
order models as functions of space dimension it is computationally infeasible to run the MDEIM model.

Here we analyze the on-line computational CPU times obtained by the studied reduced order models with respect to the number of POD bases functions,
space points and DEIM indexes (see Figures \ref{Fig::Online_CPU_comparisons_2DSWE}). Since both direct projection and directional derivative reduced Jacobian computations depend on the full space dimension the corresponding reduced order
models do not gain much efficiency sometimes being slower even than the high-fidelity model. Among them, the directional derivative approach is faster. Initially we fix the number of DEIM points to $30$ and start increasing the
size of POD basis. For smaller values of $k$, tensorial approach leads to the fastest surrogate model as noticed in Figure \ref{Fig::Online_CPU_comparisons_2DSWE}(a).
Tensorial POD has a theoretical computational complexity depending on $k$ and for POD size of $50$ we notice that
SMDEIM is $1.25\times $ times faster than the tensorial calculus based reduced order model thus confirming the theoretical results. 
The DEIM based surrogate model is $2.75\times$ slower than the SMDEIM model explained by their corresponding computational complexities in table \ref{table_complexityI}
and the lack of Jacobian accuracy noticed in Figure \ref{Fig::Red_Jac_errors_2D_SWE}. The latter forces the corresponding reduced order model to increase the number 
of Newton iterations (see table \ref{table_NR_dim_POD_2D_SWE}) in order to achieve a similar level of solution accuracy as the other surrogate models. 

\begingroup
\begin{table}  
\begin{center} 
\begin{tabular}[h]{|c|c|c|c|c|c|c|c|c|c|c|} \hline 
 POD basis dimension & 5 & 10 & 15 & 20 & 25 & 30 & 35 & 40 & 45 & 50 \cr \hline \hline 
SMDEIM & 3.00 & 3.00 & 3.00 & 3.00 & 3.00 & 3.00 & 3.00 & 3.00 & 3.00 & 3.00 \cr \hline 
DEIM & 4.00 & 5.00 & 5.00 & 5.00 & 5.00 & 6.00 & 7.00 & 8.00 & 9.00 & 9.00 \cr \hline 
DirDer/DirPro/Tensor & 3.00 & 3.00 & 3.00 & 3.00 & 3.00 & 3.00 & 3.00 & 3.00 & 3.00 & 3.00 \cr \hline 
Full & 4.00 & 4.00 & 4.00 & 4.00 & 4.00 & 4.00 & 4.00 & 4.00 & 4.00 & 4.00 \cr \hline 
\end{tabular} 
\end{center} 
\caption{The mean variation of Newton-Raphson iterates per time step along the change in the POD basis dimension}\label{table_NR_dim_POD_2D_SWE} 
\end{table}  
\endgroup%

Next we set the number of DEIM indexes to $30$ and the dimension of POD bases to $50$. Figure \ref{Fig::Online_CPU_comparisons_2DSWE}(b) 
describes the computational costs of the discussed reduced order models as a function of number of space points. For more than $10^5$ mesh points we notice 
that reduced order SMDEIM model is $1.24,~2.12,~309,~344$ and $400$ times
faster than tensorial, DEIM, directional derivative, direct projection and high-fidelity models. Clearly there is no advantage of using the direct projection and directional derivative approaches
from computational complexity point of view. However they are useful due to their non-intrusive nature making them easy to implement even for very complex models. 

The number of averaged Newton iterations per time step for DEIM reduced order model increases with the space dimensions while for SMDEIM, tensorial, directional derivative
and direct projection it remains constant. The SMDEIM and tensorial reduced order models efficiency is also a consequence of the reduced number of Newton iterations
in addition to the reduced number of degrees of freedom as shown in table \ref{table_NR_dim_space_2D_SWE}.

\begin{figure}[h]
  \centering
  \subfigure[$m~=~ 30$;]{\includegraphics[scale=0.27]{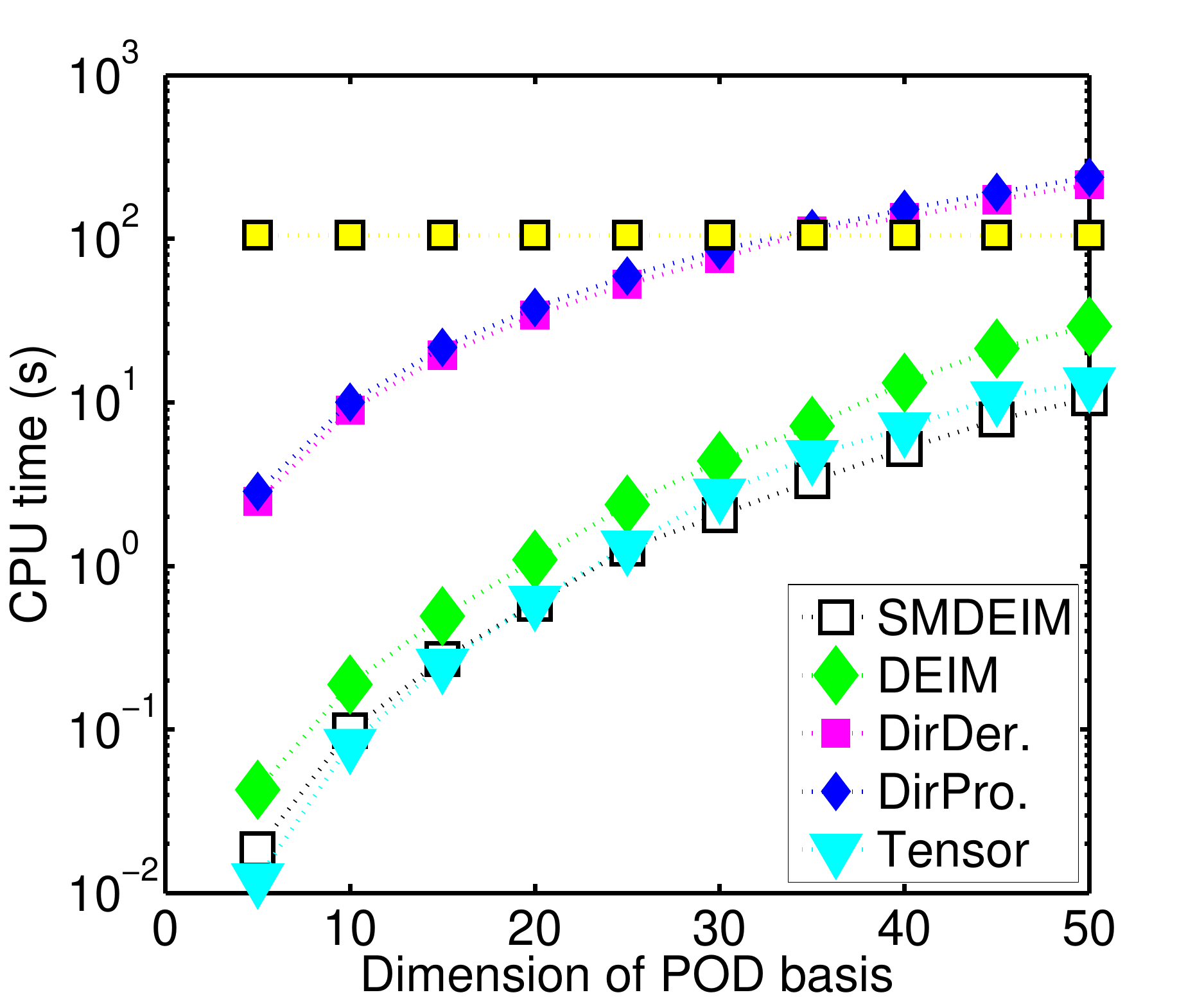}}
  \subfigure[$m~ =~30,~k =~ 50$;]{\includegraphics[scale=0.27]{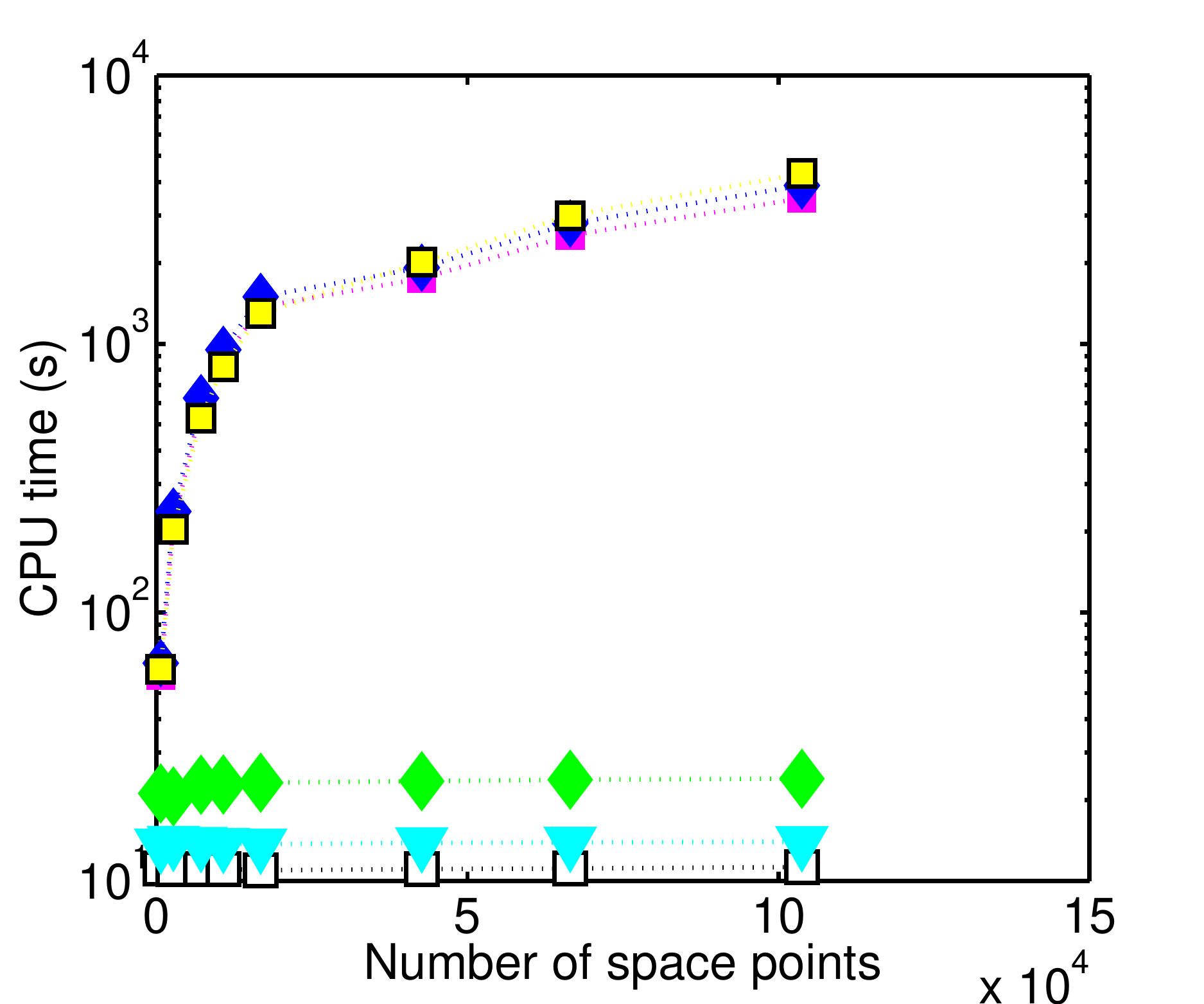}}
\subfigure[$,k~ = ~25$;] {\includegraphics[scale=0.27]{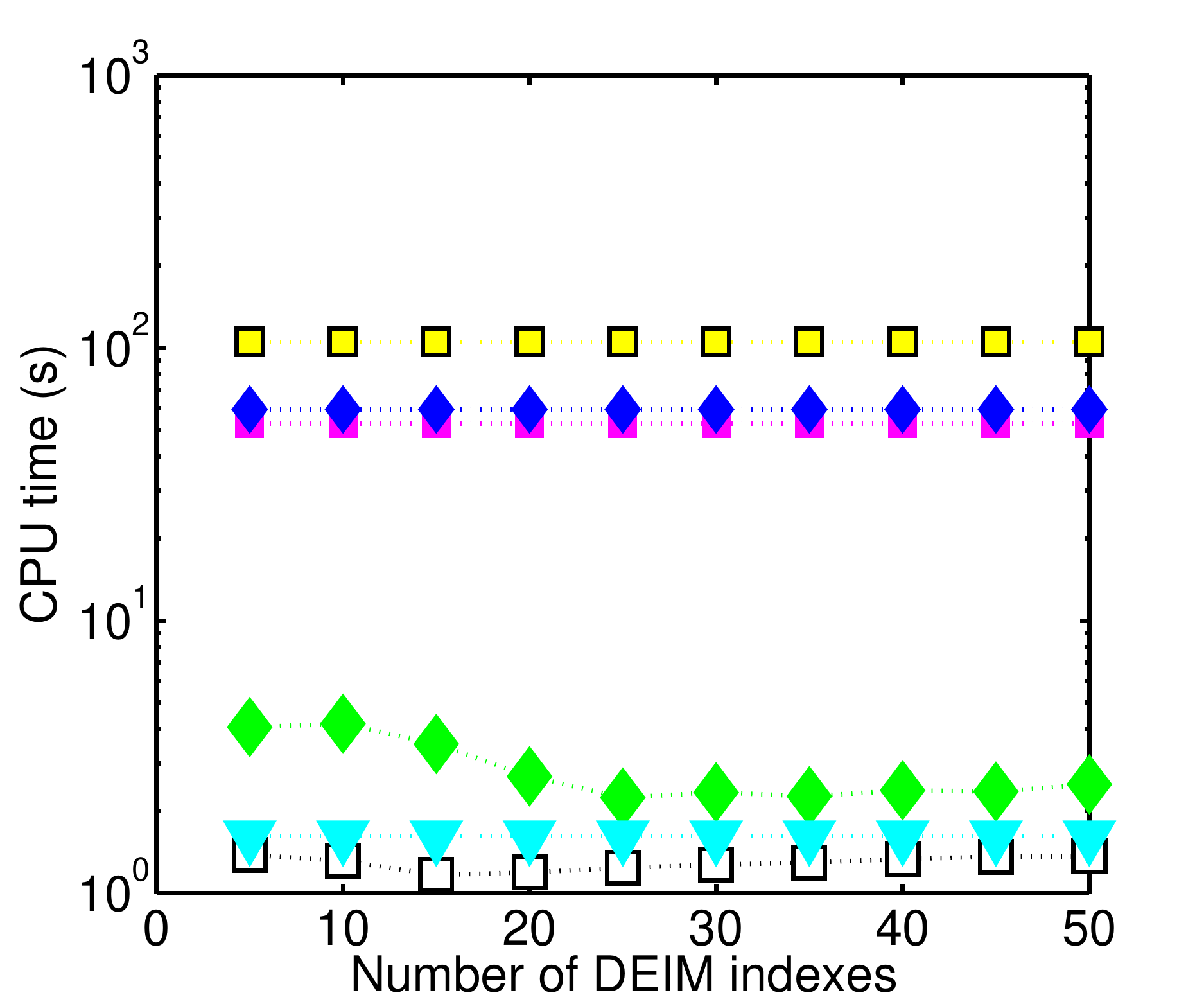}}
\caption{On-line computational time performances of 2D-SWE reduced order models
\label{Fig::Online_CPU_comparisons_2DSWE}}
\end{figure}

\begingroup
\begin{table}  
\begin{center} 
\begin{tabular}[h]{|c|c|c|c|c|c|c|c|c|} \hline 
 No of space points & 713 & 2745 & 7171 & 10769 & 16761 & 42657 & 66521 & 103776  \cr \hline \hline 
SMDEIM & 3.00 & 3.00 & 3.00 & 3.00 & 3.00 & 3.00 & 3.00 & 3.00   \cr \hline 
DEIM & 6.00 & 6.00 & 7.00 & 7.00 & 7.00 & 8.00 & 8.00 & 8.00  \cr \hline 
DirDer/DirPro/Tensor & 3.00 & 3.00 & 3.00 & 3.00 & 3.00 & 3.00 & 3.00 & 3.00  \cr \hline 
Full & 5.00 & 4.00 & 5.00 & 6.00 & 6.00 & 5.00 & 5.00 & 6.00  \cr \hline 
\end{tabular} 
\end{center} 
\caption{The mean variation of Newton-Raphson iterates per time step along the change in the space dimension}\label{table_NR_dim_space_2D_SWE} 
\end{table}  
\endgroup%

Figure \ref{Fig::Online_CPU_comparisons_2DSWE}(c) shows the on-line CPU times of the reduced order models for various numbers of 
DEIM indexes. We notice that the computational cost of DEIM reduced order model is decreasing and this behaviour is explained by the reduction in 
the average Newton iterations number per time steps (see table \ref{table_NR_DEIM_points_2D_SWE}). This is a consequence of the Jacobians accuracy gain once the number
of DEIM points is increased. 
\begingroup
\begin{table}  
\begin{center} 
\begin{tabular}[h]{|c|c|c|c|c|c|c|c|c|c|c|} \hline 
 No of DEIM indexes & 5 & 10 & 15 & 20 & 25 & 30 & 35 & 40 & 45 & 50 \cr \hline \hline 
SMDEIM & 3.00 & 3.00 & 3.00 & 3.00 & 3.00 & 3.00 & 3.00 & 3.00 & 3.00 & 3.00 \cr \hline 
DEIM & 13.00 & 12.00 & 10.00 & 6.00 & 6.00 & 5.00 & 5.00 & 5.00 & 5.00 & 5.00 \cr \hline 
DirDer/DirPro/Tensor & 3.00 & 3.00 & 3.00 & 3.00 & 3.00 & 3.00 & 3.00 & 3.00 & 3.00 & 3.00 \cr \hline 
Full & 4.00 & 4.00 & 4.00 & 4.00 & 4.00 & 4.00 & 4.00 & 4.00 & 4.00 & 4.00 \cr \hline 
\end{tabular} 
\end{center} 
\caption{The mean variation of Newton-Raphson iterates per time step along the change in the number of DEIM indexes}\label{table_NR_DEIM_points_2D_SWE} 
\end{table}  
\endgroup%

For all the experiments, the proposed reduced order solutions present similar accuracy levels obtained using various numbers of Newton iterations as seen in tables
across this subsection.  For different models, where higher nonlinearities are presented, we anticipate a different scenario where the reduced models errors 
are more sensitive to the quality of reduced order Jacobian. These scenarios advocate the use of a SMDEIM reduced order model since it will be much faster than 
tensorial model and more accurate than DEIM based reduced order model. 

The computational savings and accuracy levels obtained by the SMDEIM reduced order model depend on the number of
POD modes and number of DEIM interpolation indexes. These numbers may be large in practice in order to capture well the full model dynamics.
Local POD and local DEIM versions were proposed by \citet{Rapun_2010} and \citet{Peherstorfer_2013} to alleviate this problem. A similar
strategy can be applied for Jacobian approximation thus improving the performances of SMDEIM models. The idea of a local approach for nonlinear model reduction
with local POD and local GNAT was first proposed by \citet{Amsallem_2012}. Machine learning techniques such as $K$-means \citep{Lloyd_1957,MacQueen_1967,Steinhaus_1956}  can be used for both time and space partitioning.
A recent study investigating cluster-based reduced order modeling was proposed by \citet{Kaiseretal2013}. First order and second order adjoint methodologies can be
employed to compute useful error indicators \citep{Rao20141256}  in the context of building statistically modeling errors 
\citep{Roderick_et_al_2013,Drohmann_Carlberg_2013} to enhance the SMDEIM reduced order model solutions.

\section{Conclusions}\label{sec:Conclusions}

In the POD Galerkin approach to reduced order modeling the cost of evaluating nonlinear terms and their derivatives during
the on-line stage scales with the full space dimension, and this constitutes a major efficiency bottleneck. This work introduces the sparse matrix discrete empirical interpolation method to compute accurate approximations of parametric matrices with constant sparsity structure. The approach is employed in a reduced order modeling  framework to efficiently obtain  accurate reduced order Jacobians. The sparse algorithm utilizes samples of of only the nonzero entries of the matrix series. The economy SVD factorization of the nonzero elements of the snapshots matrix, when appropriately padded with zeros, is equivalent to a valid economy SVD for the full snapshots matrix. The sparse SVD is applied to much smaller vectors and is therefore considerably more efficient.

In contrast with the traditional matrix DEIM method, the sparse version can be applied to much larger problems since its off-line computational complexity  depends on the number of nonzero Jacobian elements, dimension of state POD basis, and the number of DEIM indexes. The on-line cost for computing reduced  derivatives using SMDEIM is similar with the cost required by MDEIM.

An important application of SMDEIM is the construction of reduced order implicit time integration schemes since many problems arising in practice are stiff and so are the corresponding reduced order models.

Several strategies for Jacobian computations are implemented and the performance of the corresponding reduced order models is analyzed.  These strategies are based on discrete empirical interpolation method applied to function, tensorial calculus, the full Jacobian projection onto the reduced  basis subspace, and directional derivatives. Numerical experiments are carried out using the one dimensional Burgers equation and two dimensional SWE models. The construction of reduced order models is based on proper orthogonal decompositions and Galerkin projection, and the reduced nonlinearities are constructed using tensorial calculus. In the majority of experiments SMDEIM provides the fastest on-line reduced order model. For $10^5$ mesh points the 
reduced order SMDEIM SWE model is $1.24,~2.12,~309,~344$, and $400$ faster than the tensorial, DEIM, direct projection, directional derivative, and high-
fidelity models, respectively. For this space configuration the memory burden of the off-line stage of the traditional MDEIM reduced order model exceeded 
our computational resources - it was not possible to store $n^2$ dimensional vectors as required, with $n$ representing the number of discrete variables.
While being slower, the direct projection and directional derivative methods propose non-intrusive reduced order models. The numerical results showed that
DEIM Jacobians approximations have a lower accuracy when using samples from the underlying functions
instead of the Jacobian matrices. Therefore a larger number of Newton iterations are equired by the DEIM reduced order model to produce solutions as accurate as those of the other discussed surrogate models. 

Future research will focus on decreasing the temporal complexity of the DEIM implicit scheme by exploiting the knowledge of the model's temporal  behavior as proposed in \citet{Carberg_etal_2012}. Forecasting the unknown variables of the reduced-order system 
of nonlinear equations at future time steps provides an initial guess for the Newton-like solvers and can significantly decrease the number of linear systems solved at each step. 

For the current test problems the reduced order models solutions are not very sensitive to the quality of the reduced order Jacobian approximations.  However, for higher nonlinearities we anticipate a different behavior where the model errors are affected more by the quality of reduced order derivatives. For these cases, the SMDEIM reduced model is expected to be much faster than the tensorial model and more accurate than the DEIM surrogate model.

As future application we propose the use of SMDEIM reduced Jacobian approximation for strongly coupled fluid-structure problems (see \citet{Vierendeels2007}) to approximate the Jacobian of the fluid and/or structural problem during the coupling iterations. On-going work by the authors focuses on reduced order constrained optimization. The current research represents an important step toward implementing the SMDEIM method for solving a reduced order optimal control problem such as the one discussed by \citet{Negri_et_al_2013}.

{\centering
\section*{Acknowledgments}
}
The work of Dr. R\u azvan Stefanescu and Prof. Adrian Sandu was supported by awards NSF CCF--1218454, NSF DMS--1419003, AFOSR FA9550--12--1--0293--DEF, AFOSR 12-2640-06, and by the Computational Science Laboratory at Virginia Tech.

\newpage
\section*{References}
\bibliographystyle{plainnat}
\bibliography{Bib/Software,Bib/ROM_state_of_the_art,Bib/POD_bib,Bib/CDS_E_proposal,Bib/sandu,Bib/comprehensive_bibliography1,Bib/Razvan_bib,Bib/Razvan_bib_ROM_IP,Bib/NSF_KB,Bib/data_assim_weak-fdvar,Bib/reduced_models}

\end{document}